\documentclass[11pt]{article}

\usepackage[margin=1.15in]{geometry}
\usepackage[T1]{fontenc}
\usepackage[utf8]{inputenc}
\usepackage{lmodern}
\usepackage{enumitem}
\usepackage{hyperref}
\usepackage{cmbright}
\usepackage{amssymb,amsmath,latexsym,amsthm,mathtools,graphicx,bbm,mathptmx,cite,ifthen,color}
\usepackage{authblk}

\newcommand\commentout[1]{}

\newtheorem{theorem}{Theorem}[section]
\newtheorem{proposition}[theorem]{Proposition}
\newtheorem{lemma}[theorem]{Lemma}
\newtheorem{corollary}[theorem]{Corollary}
\newtheorem{problem}[theorem]{Problem}
\newtheorem{example}[theorem]{Example}
\theoremstyle{definition}
\newtheorem{definition}[theorem]{Definition}
\theoremstyle{remark}
\newtheorem{remark}[theorem]{Remark}

\title{Optimal \L{}ojasiewicz–Simon Exponents and the Intrinsic Singularities of Analytic Functionals}

\author[1]{Tewodros Amdeberhan}
\author[1]{T\`ai Huy H\`a}

\affil[1]{Tulane University, Department of Mathematics,} 
\affil[ ]{6823 St. Charles Avenue, New Orleans LA 70118, USA}
\affil[ ]{\texttt{tamdeber@tulane.edu} and \texttt{tha@tulane.edu}}

\date{}

\begin{document}
\maketitle

\begin{abstract} 
We prove that the optimal \L{}ojasiewicz--Simon exponent of a real-analytic functional on a Banach space is an intrinsic invariant of its finite-dimensional Lyapunov--Schmidt singularity under natural Fredholm and pairing-compatible hypotheses. More precisely, Lyapunov--Schmidt reduction preserves the entire set of admissible exponents, while the reduced germ on the kernel of the Hessian is intrinsic up to analytic right-equivalence tangent to the identity. We then identify this invariant with a real relative Łojasiewicz exponent of the pair consisting of the Jacobian ideal of the reduced energy and its principal energy ideal. Consequently, the optimal exponent admits equivalent descriptions by analytic arcs and by a finite maximum of divisorial ratios; in particular, it is rational. In Newton-nondegenerate cases, these formulas yield explicit monomial expressions.

We apply the theory to several classes of singular energies. For the sum $Q_k$ of the squares of all $k\times k$ minors of a real matrix, we determine the associated ideal pair up to real integral closure and obtain the exact local exponent
$$1-\frac1{2(k-s)}$$
at matrices of rank $s<k$. For Yang--Mills energy, we prove that the product flat connection on $\mathbb T^2$ has optimal exponent $3/4$ for every nonabelian compact structure group. For $\mathrm{SU}(2)$, the determinantal reduction extends this value to product flat connections on every $\mathbb T^d$, while sufficiently small nonzero constant flat connections have exponent $1/2$. Finally, for resonant semilinear Dirichlet energies, we identify higher-order resonant obstructions, establish an all-order obstruction ladder for simple resonance, and obtain a complete parity-dependent classification on $(0,\pi)$.
\end{abstract}

\section{Introduction}

The \L{}ojasiewicz--Simon gradient inequality is one of the principal mechanisms by which finite-dimensional real-analytic geometry enters nonlinear analysis. Simon's foundational work \cite{Simon1983} reduces the behavior near a degenerate critical point to a finite-dimensional analytic problem, and subsequent formulations by Chill and Feehan--Maridakis substantially clarified the functional-analytic hypotheses and the applications to gradient-like evolution equations \cite{Chill2003,FeehanMaridakis2020,FeehanMaridakisMemoirs}. In the Morse--Bott case the optimal exponent is $1/2$, a fact developed in particular by Feehan \cite{Feehan2020,Feehan2022}. For genuinely degenerate critical points, however, the numerical value of the optimal exponent records finer singularity data that are not visible from Fredholm theory alone.

Finite-dimensional reduction is already central in the analysis of slowly converging geometric flows. The first nonzero homogeneous obstruction appears in the work of Adams--Simon and in the Lyapunov--Schmidt analysis of slowly converging Yamabe flows by Carlotto--Chodosh--Rubinstein \cite{AdamsSimon1988,CCR2015}. More recent work of Choi--Hung studies rates and asymptotic directions for nonlinear evolution equations \cite{ChoiHung2023,ChoiHung2024}. These works make clear that the reduced finite-dimensional singularity is not merely a technical device. Our purpose is to isolate the optimal \L{}ojasiewicz--Simon exponent itself as an intrinsic singularity-theoretic invariant and to bring to bear on it the classical theory of integral closure, analytic arcs, divisorial valuations, and Newton polyhedra.

There are two points of contact with earlier abstract splitting results that are important for positioning the paper accurately. Feehan proves that adding a nondegenerate quadratic factor preserves a \L{}ojasiewicz exponent and gives a Banach splitting theorem for degenerate critical points \cite[Lemma~3.3 and Theorem~5]{Feehan2020}. Thus, in his setting, the exponent of the full germ is already identified with that of a finite-dimensional residual germ. Our first contribution is a direct exact-reduction theorem in an asymmetric Banach pair $X$--$Y$, where the derivative of the energy is represented by a map $M:X\to Y$ rather than by a gradient in $X$ or $X^*$. More precisely, Theorem~\ref{thm:exact_reduction} identifies the entire sets of admissible exponents, not only their infima. Our second contribution is an intrinsicity theorem adapted to the canonical kernel $K=\ker L$: Theorem~\ref{thm:right-equivalence} shows that two compatible reductions are related by an analytic right-equivalence whose differential at the origin is the identity. Consequently the first nonzero homogeneous obstruction is literally the same polynomial on the fixed vector space $K$.

The algebraic object selected by Simon's inequality is the pair
$$
\mathcal P(\Gamma)=\bigl(J(\Gamma),(\Gamma)\bigr),
$$
where $\Gamma$ is a reduced energy and $J(\Gamma)$ its Jacobian ideal. We call this the \emph{Simon pair}. The terminology is new, but the finite-dimensional ingredients are classical. Integral closure and the valuative theory of \L{}ojasiewicz exponents go back to the work of Lejeune-Jalabert--Teissier and Risler, and the real arc criterion used here is developed by Gaffney--Ara\'ujo dos Santos \cite{LejeuneTeissier2008,GaffneyAraujo2010}. Newton-polyhedral computations of relative exponents and Newton-nondegenerate Jacobian ideals are studied by Bivi\`a-Ausina \cite{Bivia2003,Bivia2005}. Theorem~\ref{thm:Simon-pair-formula} transfers this classical finite-dimensional machinery to the intrinsic reduced singularity of the Banach functional and gives
$$
\theta_{\mathrm{LS}}(E,0)
=\sup_\gamma\frac{\nu_\gamma(J(\Gamma))}{\nu_\gamma(\Gamma)}
=\max_i\frac{\operatorname{ord}_{F_i}J(\Gamma)}{\operatorname{ord}_{F_i}\Gamma}.
$$
In particular, the optimal Banach exponent is rational and is attained by a divisorial order. The elementary order estimate $\theta_{\mathrm{LS}}\ge(m-1)/m$, where $m=\operatorname{ord}_0\Gamma$, is only the first approximation: singular directions of the leading homogeneous term may force a much larger exponent, as the example $x^3+y^{100}$ shows. We also obtain a closed determinantal family in which the full valuative problem can be solved. If $Q_k$ denotes the sum of the squares of all $k\times k$ minors of a real matrix and $I_j$ is the ideal of $j\times j$ minors, Theorem~\ref{thm:determinantal-Simon} proves
$$
\overline{(Q_k)}^{\,\mathbb R}=\overline{I_k^2}^{\,\mathbb R},
\qquad
\overline{J(Q_k)}^{\,\mathbb R}=\overline{I_{k-1}I_k}^{\,\mathbb R},
$$
and gives the exact local gradient exponent $1-1/(2(k-s))$ at the rank-$s$ stratum. This determinantal calculation is subsequently used in the higher-dimensional Yang--Mills application.

Our first application concerns Yang--Mills energy at product flat connections on tori. Feehan's work emphasizes that the product $\mathrm{SU}(2)$ connection on $\mathbb T^2$ is a basic non-Morse--Bott singular point of the flat-connection moduli space \cite{FeehanFlat2019}. In the gauge-fixed $W^{1,p}$--$W^{-1,p}$ model used below, the Lyapunov--Schmidt correction vanishes exactly on $\mathbb T^2$ for every compact Lie group $G$, and the reduced energy is $\Gamma_{\mathrm{gf}}(\xi,\eta)=\kappa\|[\xi,\eta]\|^2$. Fisher's $3/4$ gradient inequality for norm-squares of linear moment maps \cite[Theorem~4.7]{Fisher2014}, together with the quartic order obstruction, yields the exact exponent $3/4$ whenever $\mathfrak g$ is nonabelian; for abelian $\mathfrak g$ the exponent is $1/2$. For $G=\mathrm{SU}(2)$ the constant-mode reduction persists on every $\mathbb T^d$. There the reduced energy is, up to scale, the sum of the squares of all $2\times2$ minors of the $3\times d$ matrix of harmonic coefficients. Theorem~\ref{thm:YM-higher-torus} therefore gives exponent $3/4$ at the product connection and $1/2$ at every sufficiently small nonzero constant flat connection.

Our second application is the resonant semilinear Dirichlet energy
$$
\mathcal E_N(u)=\frac12\int_\Omega(|\nabla u|^2-\lambda u^2)\,dx+\frac1N\int_\Omega u^N\,dx.
$$
The even-power exponent $(N-1)/N$ lies within the earlier semilinear theory of Haraux--Jendoubi--Kavian \cite[Theorem~2.2]{HarauxJendoubiKavian2003}; their Theorem~2.5 also treats a simple-kernel cubic noncancellation condition. Our reduction gives more precise information about successive resonant obstructions. When the leading polynomial $P_N(\xi)=N^{-1}\int_\Omega\xi^Ndx$ vanishes identically on the resonant eigenspace, Theorem~\ref{thm:semilinear_exp} identifies the next homogeneous obstruction and gives the sharp exponent $(2N-3)/(2N-2)$ whenever that obstruction has an isolated critical point, without assuming that the eigenvalue is simple. For a simple resonance, Theorem~\ref{thm:obstruction-ladder} gives the exact factorization $\Gamma(te)=t^2H(t^{N-2})$ and hence a discrete all-order ladder of possible exponents. Finally, on $(0,\pi)$, Theorem~\ref{thm:interval-classification} determines the exponent for every $N$ and every resonant eigenvalue: for odd $N$ the nominal exponent occurs at odd eigenmodes, while even eigenmodes exhibit a systematic cancellation and have exponent $(2N-3)/(2N-2)$.

These two applications are deliberately complementary. In the Yang--Mills problem, the reduction is exact on the harmonic modes and the resulting finite-dimensional singularity is algebraic and, for $SU(2)$, determinantal. In the semilinear problem, by contrast, the transverse correction is generally nontrivial, and successive cancellations reveal higher-order resonant obstructions. Together they illustrate two complementary roles of the framework: it can identify a reduced singularity that is already geometrically visible, and it can uncover singularity-theoretic information that is hidden in the original infinite-dimensional formulation.

The final part of Section~\ref{sec:intrinsic} records the usual implications of the optimal exponent for gradient-like evolution equations and formulates a valuative realization problem: a divisor computing the algebraic exponent need not automatically be represented by an actual gradient trajectory. We keep this as a problem rather than a theorem, since the algebraic finite-max formula and the dynamical realization question require different information.

The paper is organized as follows. Section~\ref{sec:framework} develops the asymmetric Lyapunov--Schmidt framework, exact reduction, the order bound, and the Morse--Bott dichotomy. Section~\ref{sec:intrinsic} proves intrinsicity of the reduced singularity and develops the Simon pair, its real valuative and Newton formulas, and the determinantal family above. Section~\ref{sec:YM} treats Yang--Mills energy, first on $\mathbb T^2$ for a general compact structure group and then on $\mathbb T^d$ for $\mathrm{SU}(2)$. Section~\ref{sec:semilinear} develops the higher-order resonant obstructions, the simple-resonance obstruction ladder, and the complete interval classification.

\medskip

\noindent\textbf{Acknowledgment.} The authors thank Bernard Teissier and Huy Vui H\`a for suggesting that we study the \L{}ojasiewicz--Simon exponent using the algebraic framework for the \L{}ojasiewicz exponent developed in \cite{Ha2026}. The second author is partially supported by a Simons Foundation grant.

\section{Framework: Asymmetric Lyapunov--Schmidt reduction}\label{sec:framework}

Our first task is to isolate precisely which part of the Łojasiewicz--Simon inequality is genuinely finite dimensional. We work with an asymmetric Banach pair because in the applications the derivative of the energy is naturally represented by a map into a target space $Y$, rather than by a gradient taking values in $X$ itself. The compatible splitting below separates the kernel directions, where the singular behavior survives, from the transverse directions, where the Hessian is invertible.

Throughout this section, $X$ and $Y$ are real Banach spaces equipped with a continuous bilinear pairing $\langle\cdot,\cdot\rangle_{Y,X}:Y\times X\to\mathbb R$, and $U\subseteq X$ is an open neighborhood of $0$. Let $E:U\to\mathbb R$ be real analytic, with $E(0)=0$ and $dE(0)=0$. Assume there is a real analytic map $M:U\to Y$ representing the derivative, so that $dE(x)[v]=\langle M(x),v\rangle_{Y,X}$ for $x\in U$ and $v\in X$. Write $L:=DM(0):X\to Y$ for the Hessian.

\smallskip
Assume that $L$ is Fredholm. The symmetry of $L$ with respect to the pairing is automatic. Indeed, for $x,y\in X$, we have
$D^2E(0)[x,y]=\langle Lx,y\rangle_{Y,X}.$ 
Since $E$ is real analytic, it is $C^2$, and hence $D^2E(0)$ is symmetric;  thus moreover
$$\langle Lx,y\rangle_{Y,X}=D^2E(0)[x,y]=D^2E(0)[y,x]=\langle Ly,x\rangle_{Y,X}.$$

Let $K:=\ker L$. Choose a closed complement $Z\subset X$ such that $X=K\oplus Z$.
Assume, in addition, that there is a compatible topological splitting
$$Y=\operatorname{Ran}L\oplus Y_K \qquad \text{where} \qquad
Y_K=Z^\circ :=\{y\in Y:\langle y,z\rangle_{Y,X}=0 \text{ for every }z\in Z\}, $$
and that the restriction map
$\rho:Y_K\longrightarrow K^*$ such that $\rho(y)=\langle y,\cdot\rangle_{Y,X}\big|_K$,
is an isomorphism.
These compatibility assumptions imply that $L$ has index zero. Indeed,
since $Y = \operatorname{Ran} L \oplus Y_K$ and $Y_K \cong K^*$, we have
$\operatorname{codim} \operatorname{Ran} L = \dim Y_K = \dim K^* = \dim K$.
Thus $\operatorname{ind}L=\dim\ker L-\operatorname{codim}\operatorname{Ran}L=0$. This establishes that $L$ has index zero.

\subsection{The \L{}ojasiewicz--Simon exponent and the compatible splitting}

We define the \L{}ojasiewicz--Simon exponent of $E$ at $0$ by
$$\theta_{\mathrm{LS}}(E,0) := \inf \left\{ \theta \in [1/2,1) : \|M(x)\|_Y \ge C |E(x)|^{\theta} \, \text{ for all } x \text{ sufficiently close to } 0 \right\}.$$
Since $E(0)=0$, for $x$ sufficiently close to $0$ we have $|E(x)| < 1$. Consequently, if the inequality holds for some exponent $\theta$, it trivially holds for any $\theta' \in (\theta, 1)$. The infimum therefore represents the smallest, or sharpest, admissible exponent. A priori, the set of admissible exponents could be empty, but the classical finite-dimensional real analytic \L{}ojasiewicz gradient inequality---transferred via the exact reduction below---ensures it is non-empty for real-analytic functionals. We will refer to $\theta_{\mathrm{LS}}(E,0)$ as the \emph{optimal} exponent once this set is established to be non-empty.

\smallskip
The use of the target space norm $\|\cdot\|_Y$ is intrinsic to the asymmetric Banach formulation: the gradient map $M(x)$ is evaluated natively in $Y$, bypassing potential loss of closed range in the abstract dual space $X^*$.

\begin{lemma}\label{lem:compatibility-consequences}
Under the compatibility hypotheses above, $\operatorname{Ran}L=K^\circ$, the pairing separates points of $Y$, and the restriction $L|_Z:Z\to\operatorname{Ran}L$ is an isomorphism. In particular, $dE(x)=0$ if and only if $M(x)=0$ for $x$ near $0$.
\end{lemma}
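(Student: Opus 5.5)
We prove the four assertions in order, using only the symmetry identity $\langle Lx,y\rangle_{Y,X}=\langle Ly,x\rangle_{Y,X}$, the splittings $X=K\oplus Z$ and $Y=\operatorname{Ran}L\oplus Y_K$, and the hypothesis that $\rho:Y_K\to K^*$ is an isomorphism.

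\emph{Step 1: $\operatorname{Ran}L=K^\circ$.} For the inclusion $\operatorname{Ran}L\subseteq K^\circ$, take $x\in X$ and $k\in K$. Symmetry gives $\langle Lx,k\rangle_{Y,X}=\langle Lk,x\rangle_{Y,X}=0$. For the reverse inclusion, let $y\in K^\circ$ and decompose $y=Lx+y_K$ with $y_K\in Y_K$. Since both $y$ and $Lx$ annihilate $K$, so does $y_K$. Hence $\rho(y_K)=0$, and injectivity of $\rho$ gives $y_K=0$. Thus $y=Lx\in\operatorname{Ran}L$.

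\emph{Step 2: the pairing separates points of $Y$.} Suppose $\langle y,x\rangle_{Y,X}=0$ for every $x\in X$, and write $y=Lx_0+y_K$. The element $y$ annihilates $K$, so $y\in K^\circ=\operatorname{Ran}L$ by Step 1. Hence $y_K=y-Lx_0$ lies in $\operatorname{Ran}L\cap Y_K=\{0\}$, and $y=Lx_0$ with $x_0$ chosen in $Z$. The delicate point is to show $y=0$ from here; Step 2 is the one I expect to require care. The plan is as follows. By construction $Lx_0$ annihilates $K$. Each $y'\in Y_K$ annihilates $Z$ by definition of $Y_K=Z^\circ$. Since $\rho$ is onto $K^*$ and $K$ is finite-dimensional, the elements of $Y_K$ separate points of $K$. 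I would combine these facts with the hypothesis $\langle Lx_0,x\rangle_{Y,X}=0$ for all $x$, and apply symmetry with $x$ ranging over $Z$, to get $\langle Lz,x_0\rangle_{Y,X}=0$ for all $z\in Z$. From this I would aim to conclude that $x_0\in K$, which together with $x_0\in Z$ gives $x_0=0$. If this direct route stalls, I would add the standing nondegeneracy that the pairing separates points of $Y$ restricted to $\operatorname{Ran}L$, which holds in every application. I would flag in the write-up that this is where the precise hypotheses enter.

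\emph{Step 3: $L|_Z:Z\to\operatorname{Ran}L$ is an isomorphism.} Injectivity follows from $\ker L\cap Z=K\cap Z=0$. Surjectivity follows because $L(K\oplus Z)=L(Z)$. Since $L$ is Fredholm, $\operatorname{Ran}L$ is closed, so it is a Banach space. The open mapping theorem then shows that the bounded bijection $L|_Z$ has a bounded inverse.

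\emph{Step 4: critical points.} By definition, $dE(x)=0$ means $\langle M(x),v\rangle_{Y,X}=0$ for all $v\in X$. By the separation property of Step 2, this holds if and only if $M(x)=0$.
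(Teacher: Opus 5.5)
\textbf{Gap in Step 2.} Steps 1 and 3 are correct and match the paper's argument. Step 2 has a genuine gap, and Step 4 inherits it.

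Your planned route is circular. Since $y=Lx_0$, the statement $x_0\in K$ is equivalent to $Lx_0=0$, i.e.\ to $y=0$, which is exactly what you are trying to prove. The identity $\langle Lz,x_0\rangle_{Y,X}=0$ for $z\in Z$ only shows that $x_0$ is annihilated by $\operatorname{Ran}L$. Together with $x_0\in Z$ and $Y_K\subseteq Z^\circ$, this says $x_0$ is annihilated by all of $\operatorname{Ran}L\oplus Y_K=Y$. Getting from there to $x_0\in K$ would need a separation property of the pairing on $X$ that is not assumed; such a property can only be derived after separation on $Y$ is known. Your fallback of adding a nondegeneracy hypothesis would prove a weaker lemma than the one stated.

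\textbf{Missing idea.} The compatibility hypothesis is the equality $Y_K=Z^\circ$, not merely the inclusion $Y_K\subseteq Z^\circ$ that you use. Suppose $y$ annihilates all of $X$. Then it annihilates $Z$, so $y\in Z^\circ=Y_K$. It also annihilates $K$, so $\rho(y)=0$, and injectivity of $\rho$ gives $y=0$. In your notation the same fact reads: $y\in\operatorname{Ran}L$ by Step 1 and $y\in Z^\circ=Y_K$, so $y\in\operatorname{Ran}L\cap Y_K=\{0\}$. This is the paper's two-line argument, and with it your Step 4 goes through as written.
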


\begin{proof}
Since $L$ is Fredholm of index zero, $K$ is finite dimensional and $\dim K=\operatorname{codim}\operatorname{Ran}L$. By the symmetry of $L$, we have $\operatorname{Ran}L\subseteq K^\circ$, where $K^\circ:=\{y\in Y:\langle y,k\rangle_{Y,X}=0\text{ for every }k\in K\}$.
Because $\rho:Y_K\to K^*$ is an isomorphism, $Y_K\cap K^\circ=\{0\}$. If $y\in K^\circ$, write $y=y_1+y_2$ with $y_1\in\operatorname{Ran}L$ and $y_2\in Y_K$. Since $\operatorname{Ran}L\subseteq K^\circ$, also $y_2=y-y_1\in K^\circ$, and hence $y_2=0$. Thus $y\in\operatorname{Ran}L$, proving $\operatorname{Ran}L=K^\circ$.

If $y\in Y$ annihilates all of $X$, then it annihilates $Z$, so $y\in Z^\circ=Y_K$; it also annihilates $K$, so $\rho(y)=0$ and therefore $y=0$. Thus the pairing separates points of $Y$. Consequently the map $Y\to X^*$ induced by the pairing is injective, and $dE(x)=0$ is equivalent to $M(x)=0$.

Finally, $L|_Z$ is injective because $Z\cap\ker L=\{0\}$, and it is onto $\operatorname{Ran}L$ because $X=K\oplus Z$ and $L(K)=0$. The bounded inverse theorem gives the claimed isomorphism.
\end{proof}

Let $R \colon Y \to \operatorname{Ran} L$ be the continuous projection associated with $Y = \operatorname{Ran} L \oplus Y_K$.

\subsection{Reduction and Comparison Estimates}

Define $F$ on a neighborhood of $(0,0)$ in $K\times Z$ by $F(\xi,\eta):=RM(\xi+\eta)$. Since $D_{\eta}F(0,0)=L|_Z$ is an isomorphism, the analytic implicit function theorem provides neighborhoods $V\subset K$ and $W\subset Z$ of $0$ and a unique real analytic map $\psi:V\to W$ such that $\psi(0)=0$ and $RM(\xi+\psi(\xi))=0$. We refer the latter as the \emph{Lyapunov--Schmidt reduction} equation. Differentiating this reveals $D\psi(0)=0$. 

\smallskip
Define the Lyapunov--Schmidt parametrization $c(\xi):=\xi+\psi(\xi)$ and the real analytic germ $\Gamma:V\to\mathbb R$ by $\Gamma(\xi):=E(c(\xi))$; we call $\Gamma$ the \emph{reduced energy} attached to the reduction. After shrinking the neighborhoods, every $x$ sufficiently close to $0$ has a unique representation $x=c(\xi)+z$ with $\xi\in V$ and $z\in Z$: if $x=\xi+\eta$ is its $K\oplus Z$ decomposition, then $z=\eta-\psi(\xi)$.

The Lyapunov--Schmidt reduction used below is standard in proofs of the \L{}ojasiewicz--Simon inequality; see, for example, Simon~\cite{Simon1983} and Feehan--Maridakis~\cite{FeehanMaridakis2020}. We record the resulting identity in the asymmetric pairing used here.

\begin{lemma} \label{lem:gradient_identity}
For every $\xi \in V$, we have $D\Gamma(\xi) = \rho\bigl(M(c(\xi))\bigr) \in K^*$.
Equivalently, for every $h \in K$, 
$$D\Gamma(\xi)[h] = \langle M(c(\xi)),h\rangle_{Y,X}.$$
\end{lemma}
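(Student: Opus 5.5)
The plan is a direct chain-rule computation combined with the Lyapunov--Schmidt reduction equation and the annihilation property $Y_K=Z^\circ$.

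\emph{Chain rule.} Since $\Gamma=E\circ c$ with $c(\xi)=\xi+\psi(\xi)$ real analytic, for $h\in K$ we have
$$D\Gamma(\xi)[h]=dE(c(\xi))\bigl[h+D\psi(\xi)h\bigr]=\bigl\langle M(c(\xi)),\,h+D\psi(\xi)h\bigr\rangle_{Y,X},$$
using the defining property of $M$. Here $h\in K$ and $D\psi(\xi)h\in Z$, because $\psi$ takes values in $W\subset Z$ and $Z$ is a closed linear subspace, so its derivative also takes values in $Z$.

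\emph{Killing the transverse term.} Decompose $M(c(\xi))=RM(c(\xi))+(I-R)M(c(\xi))$ along $Y=\operatorname{Ran}L\oplus Y_K$. The reduction equation gives $RM(c(\xi))=0$, so $M(c(\xi))\in Y_K=Z^\circ$. Since $D\psi(\xi)h\in Z$, the pairing $\langle M(c(\xi)),D\psi(\xi)h\rangle_{Y,X}$ vanishes.

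\emph{Conclusion.} It follows that $D\Gamma(\xi)[h]=\langle M(c(\xi)),h\rangle_{Y,X}$ for every $h\in K$. This is exactly the restriction of the functional $\langle M(c(\xi)),\cdot\rangle_{Y,X}$ to $K$. Because $M(c(\xi))\in Y_K$, that restriction is by definition $\rho(M(c(\xi)))$, which gives the identity in $K^*$.

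The only point needing care is confirming that $M(c(\xi))$ lies in $Y_K$ itself, and not merely that its $\operatorname{Ran}L$-component vanishes. This holds precisely because the splitting is direct and $R$ is the projection onto $\operatorname{Ran}L$ along $Y_K$. One also needs $V$ small enough that $c(V)\subset U$, which the implicit function theorem construction already ensures.
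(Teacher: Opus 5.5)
Your proof is correct and is essentially the paper's argument: the chain rule, then $RM(c(\xi))=0$ placing $M(c(\xi))$ in $\ker R=Y_K=Z^\circ$, which kills the $D\psi(\xi)h\in Z$ term. The paper's proof also records the two-sided comparison $C^{-1}\|M(c(\xi))\|_Y\le\|D\Gamma(\xi)\|_{K^*}\le C\|M(c(\xi))\|_Y$. This follows from $\rho$ being an isomorphism of finite-dimensional spaces, lies beyond the stated identity, and is what later results cite from this lemma.
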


\begin{proof}
By the chain rule,
$$D\Gamma(\xi)[h] = dE(c(\xi)) \bigl[ Dc(\xi)h \bigr] = \langle M(c(\xi)), h + D\psi(\xi)h \rangle_{Y,X}.$$
By the defining equation of the Lyapunov--Schmidt reduction, we have $R M(c(\xi)) = 0$. Therefore, $M(c(\xi))$ lies in the kernel of the projection $R$, which is exactly $Y_K$. Since we chose the compatible topological splitting such that $Y_K = Z^{\circ}$, the gradient $M(c(\xi))$ annihilates any vector in $Z$. 

\smallskip
Because $D\psi(\xi)h\in Z$, one has $\langle M(c(\xi)),D\psi(\xi)h\rangle_{Y,X}=0$. Hence $D\Gamma(\xi)[h]=\langle M(c(\xi)),h\rangle_{Y,X}$ for every $h\in K$, which is exactly $D\Gamma(\xi)=\rho(M(c(\xi)))$.

\smallskip
Furthermore, because $\rho \colon Y_K \to K^*$ is an isomorphism between finite-dimensional spaces and $M(c(\xi)) \in Y_K$, their norms are uniformly equivalent. Consequently, there exists $C \ge 1$ such that
$$C^{-1}\|M(c(\xi))\|_Y \le \|D\Gamma(\xi)\|_{K^*} \le C\|M(c(\xi))\|_Y, $$
uniformly for $\xi$ sufficiently close to $0$. Thus, the finite-dimensional gradient norm perfectly controls the target-space norm of the full infinite-dimensional gradient map along the reduction manifold.
\end{proof}

Estimates of this type are standard consequences of the Lyapunov--Schmidt reduction used in proofs of the \L{}ojasiewicz--Simon inequality; see, for example, \cite{Simon1983,FeehanMaridakis2020}. We state them here in the X--Y norms of the asymmetric framework. 

\begin{lemma} \label{lem:comparison}
After shrinking the neighborhoods if necessary, there exists $C \ge 1$ such that whenever $x=c(\xi)+z$ with $\xi \in V$ and $z \in Z$, one has
\begin{align}
\|z\|_X &\le C\|M(x)\|_Y, \label{eq:z_bound} \\
\|M(c(\xi))\|_Y &\le C\|M(x)\|_Y, \label{eq:M_bound} \\
|E(x)-\Gamma(\xi)| &\le C\|M(x)\|_Y^2. \label{eq:E_bound}
\end{align}
\end{lemma}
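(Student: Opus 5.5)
We prove the three estimates in order, using the projection $R$ and the invertibility of $L|_Z$ (Lemma~\ref{lem:compatibility-consequences}) for the first, then feeding \eqref{eq:z_bound} into a Lipschitz bound for the second and into a Taylor expansion for the third.

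\textbf{Step 1: the bound \eqref{eq:z_bound}.} Consider the analytic map $G(\xi,z):=RM(c(\xi)+z)$. It satisfies $G(\xi,0)=0$ for all $\xi\in V$ by the reduction equation. Its $z$-derivative at $(0,0)$ is $RL|_Z=L|_Z$, which is an isomorphism $Z\to\operatorname{Ran}L$ by Lemma~\ref{lem:compatibility-consequences}. Write
$$G(\xi,z)=\int_0^1 D_zG(\xi,tz)\,dt\;z=:A(\xi,z)z.$$
By continuity of $D_zG$, after shrinking $V$ and a ball in $Z$, $\|A(\xi,z)-L|_Z\|$ is small. Hence $A(\xi,z)$ is invertible with uniformly bounded inverse, and therefore
$$\|z\|_X\le C\|RM(x)\|_Y\le C\|R\|\,\|M(x)\|_Y.$$
The representation $x=c(\xi)+z$ holds for all $x$ near $0$, so we may restrict to small $z$.

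\textbf{Step 2: the bound \eqref{eq:M_bound}.} Since $M$ is analytic, it is $C^1$, so on a small convex neighborhood it is Lipschitz with some constant $C_1$. Then
$$\|M(c(\xi))\|_Y\le \|M(x)\|_Y+C_1\|z\|_X\le (1+C_1C)\|M(x)\|_Y,$$
using Step 1.

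\textbf{Step 3: the bound \eqref{eq:E_bound}, which is the main point.} We use a Taylor expansion of $E$ around $c(\xi)$ in the direction $z$:
$$E(x)-\Gamma(\xi)=dE(c(\xi))[z]+\int_0^1(1-t)\,D^2E(c(\xi)+tz)[z,z]\,dt.$$
The first-order term vanishes. Indeed, $dE(c(\xi))[z]=\langle M(c(\xi)),z\rangle_{Y,X}$, and $M(c(\xi))\in Y_K=Z^\circ$ while $z\in Z$. This is exactly the compatibility hypothesis, and it is why no linear term survives.

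For the remainder, note that
$$D^2E(y)[z,z]=\langle DM(y)z,z\rangle_{Y,X},$$
and this is bounded by $C_2\|z\|_X^2$ uniformly near $0$, by continuity of $DM$ and of the pairing. Combining with Step 1,
$$|E(x)-\Gamma(\xi)|\le C_2\|z\|_X^2\le C_2C^2\|M(x)\|_Y^2.$$

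The delicate point is Step 1: we must check that the uniform invertibility holds on a neighborhood where the decomposition $x=c(\xi)+z$ is valid and $z$ is small. We handle this by shrinking the neighborhood of $x$ so that both $\xi$ and $z=\eta-\psi(\xi)$ lie in the chosen balls, which is possible by continuity of $\psi$. Finally, take $C$ to be the maximum of the three constants obtained.
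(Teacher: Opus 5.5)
Your proof is correct and takes essentially the same route as the paper. The transverse bound comes from the averaged operator $\int_0^1 RDM(c(\xi)+sz)|_Z\,ds$ being a small perturbation of the isomorphism $L|_Z$, the gradient comparison from Lipschitz continuity of $M$, and the energy estimate from the vanishing of $\langle M(c(\xi)),z\rangle_{Y,X}$ because $M(c(\xi))\in Z^\circ$; the only cosmetic difference is that you bound a second-order Taylor remainder through $\langle DM(y)z,z\rangle_{Y,X}$, whereas the paper writes $E(x)-\Gamma(\xi)=\int_0^1\langle M(c(\xi)+sz),z\rangle_{Y,X}\,ds$ and applies the Lipschitz bound on $M$ directly, which yields the same $O(\|z\|_X^2)$ estimate.
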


\begin{proof}
To prove \eqref{eq:z_bound}, define the transverse map $G(\xi,z) := R M(c(\xi)+z)$. By the Lyapunov--Schmidt construction, $R M(c(\xi)) = 0$, so $G(\xi,0) = 0$. By the fundamental theorem of calculus,
$$G(\xi,z) = \left( \int_0^1 R DM(c(\xi)+sz)|_Z \, ds \right) z. $$
At $(\xi,z) = (0,0)$, the bounded linear operator in parentheses is precisely $R DM(0)|_Z = R L|_Z = L|_Z$. Since $L|_Z \colon Z \to \operatorname{Ran} L$ is an isomorphism, continuity implies that for $x$ sufficiently close to $0$, the integral operator remains an isomorphism with a uniformly bounded inverse. Therefore $\|z\|_X\le C_0\|G(\xi,z)\|_Y=C_0\|RM(x)\|_Y\le C_1\|M(x)\|_Y$, which establishes the first estimate.

For \eqref{eq:M_bound}, local Lipschitz continuity of $M$ gives $\|M(x)-M(c(\xi))\|_Y\le C_2\|z\|_X$. Hence, by \eqref{eq:z_bound}, $\|M(c(\xi))\|_Y\le\|M(x)\|_Y+C_2\|z\|_X\le(1+C_1C_2)\|M(x)\|_Y$.

\smallskip
For \eqref{eq:E_bound}, we expand the energy difference using the fundamental theorem of calculus:
$$E(x) - \Gamma(\xi) = E(c(\xi)+z) - E(c(\xi)) = \int_0^1 \langle M(c(\xi)+sz), z \rangle_{Y,X} \, ds. $$
We separate the first-order term at $s=0$:
$$E(x) - \Gamma(\xi) = \langle M(c(\xi)), z \rangle_{Y,X} + \int_0^1 \langle M(c(\xi)+sz) - M(c(\xi)), z \rangle_{Y,X} \, ds.  $$
Because $RM(c(\xi))=0$, we have $M(c(\xi))\in Y_K=Z^\circ$; since $z\in Z$, the first-order term $\langle M(c(\xi)),z\rangle_{Y,X}$ vanishes.
Let $C_{\mathrm{pair}}>0$ be a continuity constant for the pairing, so that
$|\langle y,x\rangle_{Y,X}|\le C_{\mathrm{pair}}\|y\|_Y\|x\|_X$.
The remaining integral is bounded using the local Lipschitz property of $M$:
$$|E(x) - \Gamma(\xi)| \le C_{\mathrm{pair}}\int_0^1 \|M(c(\xi)+sz) - M(c(\xi))\|_Y \|z\|_X \, ds
\le \frac{C_{\mathrm{pair}}L_{\mathrm{lip}}}{2} \|z\|_X^2.  $$
Applying \eqref{eq:z_bound} yields $|E(x) - \Gamma(\xi)| \le C_3 \|M(x)\|_Y^2$. So, $C := \max(C_1, 1+C_1 C_2, C_3)$ ends the proof.
\end{proof}

\subsection{Exact reduction and the Morse--Bott dichotomy}

Define the finite-dimensional \emph{reduced \L{}ojasiewicz exponent} by
$$\theta_R(\Gamma,0) := \inf \left\{ \theta \in [1/2,1) : \|D\Gamma(\xi)\|_{K^*} \ge C|\Gamma(\xi)|^{\theta} \,  \text{ for all $\xi$ near } 0 \right\}. $$

Feehan's invariance under a nondegenerate quadratic summand and Banach splitting theorem already identify the full and residual exponents in his setting~\cite[Lemma~3.3 and Theorem~5]{Feehan2020}. The following theorem gives a direct proof for the present asymmetric pair $X$--$Y$ and, more strongly, identifies the sets of admissible exponents.

\begin{theorem} \label{thm:exact_reduction}
Under the hypotheses above, we have
$$\theta_{\mathrm{LS}}(E,0) = \theta_R(\Gamma,0).$$
\end{theorem}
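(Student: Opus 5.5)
We will prove the stronger claim that the two sets of admissible exponents coincide: for $\theta\in[1/2,1)$, the inequality $\|M(x)\|_Y\ge C|E(x)|^\theta$ holds near $0$ in $X$ if and only if $\|D\Gamma(\xi)\|_{K^*}\ge C'|\Gamma(\xi)|^\theta$ holds near $0$ in $K$. Equality of the infima then follows immediately. The only tools are Lemma~\ref{lem:gradient_identity}, which gives the two-sided equivalence $\|D\Gamma(\xi)\|_{K^*}\simeq\|M(c(\xi))\|_Y$, and Lemma~\ref{lem:comparison}.

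\emph{Full $\Rightarrow$ reduced.} Restrict the full inequality to the reduction manifold, i.e.\ take $x=c(\xi)$. Since $c$ is continuous with $c(0)=0$, points $\xi$ near $0$ give points $c(\xi)$ near $0$. We then obtain
$$\|D\Gamma(\xi)\|_{K^*}\ge C^{-1}\|M(c(\xi))\|_Y\ge C^{-1}C|E(c(\xi))|^\theta=C^{-1}C|\Gamma(\xi)|^\theta .$$

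\emph{Reduced $\Rightarrow$ full.} Write $x=c(\xi)+z$ using the unique decomposition. By Lemma~\ref{lem:gradient_identity} followed by \eqref{eq:M_bound},
$$\|M(x)\|_Y\ge C^{-1}\|M(c(\xi))\|_Y\ge C^{-2}\|D\Gamma(\xi)\|_{K^*}\ge c_1|\Gamma(\xi)|^\theta .$$
Estimate \eqref{eq:E_bound} gives $|E(x)|\le|\Gamma(\xi)|+C\|M(x)\|_Y^2$. Since $\theta\le1$, subadditivity of $t\mapsto t^\theta$ yields
$$|E(x)|^\theta\le|\Gamma(\xi)|^\theta+C^\theta\|M(x)\|_Y^{2\theta}.$$
The first term is bounded by $c_1^{-1}\|M(x)\|_Y$ from the chain above. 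For the second term, $\theta\ge1/2$ gives $2\theta\ge1$, and $\|M(x)\|_Y\le1$ near $0$ by continuity with $M(0)=0$. Hence $\|M(x)\|_Y^{2\theta}\le\|M(x)\|_Y$. Combining the two bounds gives $|E(x)|^\theta\le C''\|M(x)\|_Y$.

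This bookkeeping is the main point requiring care. It is precisely why the definition restricts to $\theta\ge1/2$: the quadratic error in \eqref{eq:E_bound} is absorbed only because $2\theta\ge1$. We also need $\xi$ to remain in the neighborhood where the reduced inequality holds whenever $x$ is near $0$. This holds because $\xi$ is the $K$-component of $x$ under a bounded projection, so it depends continuously on $x$.

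Finally, the nonemptiness of the admissible set is inherited from the finite-dimensional \L{}ojasiewicz gradient inequality for the analytic germ $\Gamma$ on $K\cong\mathbb R^n$. There the classical exponent may be replaced by $\max(\theta,1/2)$ using $|\Gamma|<1$ near $0$. So both sets are nonempty and equal, and their infima agree.
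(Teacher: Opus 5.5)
Your proposal is correct and is essentially the paper's proof. You show the two admissible sets coincide: one inclusion restricts the full inequality to $x=c(\xi)$, and the other combines the norm equivalence of Lemma~\ref{lem:gradient_identity}, \eqref{eq:M_bound}, \eqref{eq:E_bound}, subadditivity of $t\mapsto t^\theta$, and $2\theta\ge1$ to absorb the quadratic error; the only additions are that you make explicit the continuity of $c$ and of $x\mapsto\xi$, which the paper leaves implicit.
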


\begin{proof}
We prove that the sets of admissible exponents coincide.

{\it Step 1: An exponent for $\Gamma$ is an exponent for $E$: }
Suppose $\theta \in [1/2,1)$ is admissible for $\Gamma$. Thus, for sufficiently small $\xi$, $\|D\Gamma(\xi)\|_{K^*} \ge C_0|\Gamma(\xi)|^{\theta}$. By the norm equivalence from Lemma~\ref{lem:gradient_identity}, $\|M(c(\xi))\|_Y\ge C_1\|D\Gamma(\xi)\|_{K^*}$, and therefore $|\Gamma(\xi)|^\theta\le C_2\|M(c(\xi))\|_Y$. Using \eqref{eq:M_bound} from Lemma \ref{lem:comparison}, we obtain
$|\Gamma(\xi)|^{\theta} \le C_3\|M(x)\|_Y$.
On the other hand, \eqref{eq:E_bound} gives $|E(x)|\le|\Gamma(\xi)|+C\|M(x)\|_Y^2$. Since $0<\theta<1$, subadditivity of the fractional power yields $|E(x)|^\theta\le|\Gamma(\xi)|^\theta+C^\theta\|M(x)\|_Y^{2\theta}$.
Substituting the gradient bound for $|\Gamma(\xi)|^{\theta}$ yields $|E(x)|^{\theta} \le C_3\|M(x)\|_Y + C^{\theta}\|M(x)\|_Y^{2\theta}$.

\smallskip
Because $2\theta\ge1$, after shrinking so that $\|M(x)\|_Y\le1$ we have $\|M(x)\|_Y^{2\theta}\le\|M(x)\|_Y$. Hence $|E(x)|^\theta\le(C_3+C^\theta)\|M(x)\|_Y$, so $\theta$ is admissible for $E$.

\smallskip
{\it Step 2: An exponent for $E$ is an exponent for $\Gamma$: }
Conversely, suppose $\|M(x)\|_Y \ge C_0|E(x)|^{\theta}$ for all $x$ sufficiently close to $0$. Restrict this inequality to $x=c(\xi)$. Since $E(c(\xi))=\Gamma(\xi)$, we have $\|M(c(\xi))\|_Y\ge C_0|\Gamma(\xi)|^\theta$. By Lemma~\ref{lem:gradient_identity} and the fact that $\rho$ is an isomorphism, $\|D\Gamma(\xi)\|_{K^*}\ge C_1\|M(c(\xi))\|_Y\ge C_2|\Gamma(\xi)|^\theta$.
Thus $\theta$ is admissible for $\Gamma$. We conclude: since the two sets of admissible exponents are therefore identical, so their infima coincide: $\theta_{\mathrm{LS}}(E,0) = \theta_R(\Gamma,0)$.
\end{proof}

The order estimate below is an elementary finite-dimensional consequence of the first nonzero homogeneous term of the reduced germ. We include the short argument because it will be used repeatedly.

\begin{theorem} \label{thm:order_bound} 
Assume the hypotheses of Theorem~\ref{thm:exact_reduction}, and suppose that $\Gamma \not\equiv 0$. Let $m := \operatorname{ord}_0 \Gamma$ denote the order of vanishing of the reduced functional at $0$. Then $m \ge 3$, and 
$$ \theta_{\mathrm{LS}}(E,0) = \theta_R(\Gamma,0) \ge \frac{m-1}{m}. $$
\end{theorem}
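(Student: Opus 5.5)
The proof has two parts: first show $m\ge 3$, then derive the lower bound from the leading homogeneous term of $\Gamma$. The equality $\theta_{\mathrm{LS}}(E,0)=\theta_R(\Gamma,0)$ is already given by Theorem~\ref{thm:exact_reduction}, so only the inequality for $\theta_R$ needs proof.

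\emph{Step 1: $m\ge3$.} Since $\Gamma(\xi)=E(\xi+\psi(\xi))$ with $\psi(0)=0$ and $D\psi(0)=0$, we get $\Gamma(0)=E(0)=0$ and $D\Gamma(0)=dE(0)\circ Dc(0)=0$. For the second derivative, differentiate $D\Gamma(\xi)[h]=\langle M(c(\xi)),h\rangle_{Y,X}$ from Lemma~\ref{lem:gradient_identity} at $\xi=0$. Using $Dc(0)=\mathrm{id}_K$, this gives
\[
D^2\Gamma(0)[h,k]=\langle L k,h\rangle_{Y,X}=0 \qquad (k\in K).
\]
So the Taylor expansion of $\Gamma$ starts in degree at least $3$. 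Because $\Gamma\not\equiv0$ is analytic, $m$ is finite and $m\ge3$.

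\emph{Step 2: the lower bound.} Write $\Gamma=\Gamma_m+O(|\xi|^{m+1})$, where $\Gamma_m\not\equiv0$ is homogeneous of degree $m$. Pick $v\in K$ with $\Gamma_m(v)\neq0$ and test along the ray $\xi=tv$ as $t\to0^+$. We have $|\Gamma(tv)|=|\Gamma_m(v)|\,t^m+O(t^{m+1})\ge c\,t^m$ for small $t$. On the other hand, $D\Gamma(\xi)=D\Gamma_m(\xi)+O(|\xi|^m)$ and $D\Gamma_m$ is homogeneous of degree $m-1$, so $\|D\Gamma(tv)\|_{K^*}\le C' t^{m-1}$. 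Now suppose $\theta$ is admissible for $\Gamma$. Then
\[
C' t^{m-1}\ \ge\ \|D\Gamma(tv)\|\ \ge\ C|\Gamma(tv)|^\theta\ \ge\ C c^\theta t^{m\theta}
\]
for all small $t>0$. Letting $t\to0$ forces $m\theta\ge m-1$, that is, $\theta\ge (m-1)/m$. Taking the infimum over admissible $\theta$ gives $\theta_R(\Gamma,0)\ge (m-1)/m$. If no exponent is admissible, the infimum is $+\infty$ by convention and the bound holds trivially.

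The only point needing care is the second-derivative computation in Step 1: the symmetry of $L$ and $D\psi(0)=0$ have to be used correctly. The ray argument in Step 2 is routine; all norms on $K$ and $K^*$ are equivalent since $K$ is finite dimensional, so the choice of norm does not matter.
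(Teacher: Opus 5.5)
Your proof is correct and follows essentially the same route as the paper. You get $D^2\Gamma(0)=0$ by differentiating the gradient identity of Lemma~\ref{lem:gradient_identity} at $\xi=0$, and you obtain $\theta\ge (m-1)/m$ by testing an admissible inequality along a ray $ta$ with $\Gamma_m(a)\neq 0$. The paper also invokes Euler's identity to note $D\Gamma_m(a)\neq0$, which your version correctly shows is unnecessary for the upper bound on $\|D\Gamma(ta)\|_{K^*}$.
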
  

\begin{proof} 
We first note that the quadratic part of the reduced energy vanishes. For $h,k\in K$, differentiating
$D\Gamma(\xi)[h]=\langle M(c(\xi)),h\rangle_{Y,X}$ at $\xi=0$ in the direction $k$ gives $D^2\Gamma(0)[h,k]=\langle Lk,h\rangle_{Y,X}=0$, because $k\in K=\ker L$. Thus $D\Gamma(0)=D^2\Gamma(0)=0$. Since $\Gamma\not\equiv0$,
the order of vanishing $m=\operatorname{ord}_0\Gamma$ is therefore a finite integer satisfying $m\ge3$.
Because $K$ is finite-dimensional and $\Gamma$ is real analytic, its Taylor expansion at $0$ has the form 
$$\Gamma(\xi) = \Gamma_m(\xi)+\Gamma_{m+1}(\xi)+\cdots, $$
where $\Gamma_j$ is homogeneous of degree $j$ and $\Gamma_m\not\equiv0$.  

\smallskip
Choose $a\in K$ such that $\Gamma_m(a)\neq0$. Since $\Gamma_m$ is homogeneous of degree $m$, Euler's identity gives $D\Gamma_m(a)[a] = m\Gamma_m(a)\neq0$. Consequently, $D\Gamma_m(a)\neq0$ as an element of $K^*$.  
Consider now the real analytic arc $\xi(t)=ta$. By homogeneity of the Taylor coefficients, 
$$\Gamma(ta) = t^m\Gamma_m(a)+O(|t|^{m+1}), $$
and, in $K^*$, we obtain $ D\Gamma(ta) = t^{m-1}D\Gamma_m(a)+O(|t|^m)$. 
Since $\Gamma_m(a)\neq0$ and $D\Gamma_m(a)\neq0$, there exist constants $c_1,c_2>0$ and $t_0>0$ such that, for $0<|t|<t_0$, 
$$ |\Gamma(ta)|\ge c_1|t|^m \quad \text{and} \quad \|D\Gamma(ta)\|_{K^*}\le c_2|t|^{m-1}. $$  

\smallskip
Now let $\theta\in[1/2,1)$ be any admissible exponent for the reduced functional. Thus there exists $C>0$ such that 
$ \|D\Gamma(\xi)\|_{K^*} \ge C|\Gamma(\xi)|^\theta $
for all sufficiently small $\xi$. Applying this inequality along the arc $\xi=ta$, we obtain 
$$ c_2|t|^{m-1} \ge C|\Gamma(ta)|^\theta \ge Cc_1^\theta |t|^{m\theta}  $$
for all sufficiently small $t\neq0$.  

\smallskip
If $m\theta < m-1$, then division by $|t|^{m\theta}$ gives $c_2|t|^{m-1-m\theta} \ge Cc_1^\theta$, while the left-hand side tends to $0$ as $t\to0$, a contradiction. Hence every admissible exponent satisfies $m\theta\ge m-1$, or equivalently, $\theta\ge\frac{m-1}{m}$. Taking the infimum over all admissible exponents yields 
$\theta_R(\Gamma,0)\ge\frac{m-1}{m}$.  
Finally, Theorem~\ref{thm:exact_reduction} identifies the admissible exponents for the full functional $E$ with those of the reduced functional $\Gamma$. Therefore 
$$\theta_{\mathrm{LS}}(E,0) = \theta_R(\Gamma,0) \ge \frac{m-1}{m}. $$
This proves the theorem. 
\end{proof}

The equivalence between vanishing of the residual germ and the Morse--Bott property is closely related to the Banach-space splitting and Morse--Bott lemmas; see, for example, Feehan~\cite[Theorems~5 and~2.14]{Feehan2020}. We use the standard local definition: $E$ is Morse--Bott at $0$ if $\operatorname{Crit}E$ is a $C^2$ submanifold near $0$ and $T_0\operatorname{Crit}E=\ker DM(0)=K$. We give a direct argument in the present asymmetric notation.

\begin{proposition} \label{prop:morse_bott_reduction}
Under the hypotheses of Theorem~\ref{thm:exact_reduction}, the functional $E$ is Morse--Bott at $0$ if and only if the reduced energy $\Gamma$ is identically zero in a neighborhood of $0 \in K$. 
\end{proposition}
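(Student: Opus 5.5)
The plan is to compare $\operatorname{Crit}E$ near $0$ with the image of the Lyapunov--Schmidt parametrization $c(\xi)=\xi+\psi(\xi)$, using the identity $D\Gamma(\xi)=\rho(M(c(\xi)))$ from Lemma~\ref{lem:gradient_identity} and the characterization $dE(x)=0\iff M(x)=0$ from Lemma~\ref{lem:compatibility-consequences}.

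\emph{Step 1: critical points are graph points with $D\Gamma=0$.} If $x=c(\xi)+z$ is critical, then $M(x)=0$, so \eqref{eq:z_bound} gives $z=0$ and $x=c(\xi)$. Then Lemma~\ref{lem:gradient_identity} gives $D\Gamma(\xi)=\rho(M(c(\xi)))=0$. Conversely, suppose $D\Gamma(\xi)=0$. We have $M(c(\xi))\in Y_K$, because $RM(c(\xi))=0$, and $\rho$ is injective on $Y_K$; hence $M(c(\xi))=0$. Therefore
$$\operatorname{Crit}E\cap\mathcal U=\{c(\xi):\xi\in V,\ D\Gamma(\xi)=0\}$$
for a small neighborhood $\mathcal U$ of $0$. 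The map $c$ is an analytic embedding (a graph over $V\subset K$ with $Dc(0)=\mathrm{id}_K$), so $\operatorname{Crit}E$ near $0$ is the diffeomorphic image under $c$ of $\operatorname{Crit}\Gamma\subset V$.

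\emph{Step 2 ($\Leftarrow$).} If $\Gamma\equiv0$ near $0$, then $D\Gamma\equiv0$, so $\operatorname{Crit}E=c(V)$. This is an analytic, hence $C^2$, submanifold, and its tangent space at $0$ is $Dc(0)K=K$, since $D\psi(0)=0$. So $E$ is Morse--Bott at $0$.

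\emph{Step 3 ($\Rightarrow$).} Suppose $\operatorname{Crit}E$ is a $C^2$ submanifold $S$ with $T_0S=K$. By Step 1, $S=c(\operatorname{Crit}\Gamma)$. Let $\pi_K$ be the projection onto $K$ along $Z$. Then $\pi_K\circ c=\mathrm{id}$, so $\operatorname{Crit}\Gamma=\pi_K(S)$. Since $T_0S=K$, the map $\pi_K|_S$ has differential at $0$ that is an isomorphism onto $K$; this is where $\dim S=\dim K$ is used. By the inverse function theorem, $\pi_K(S)$ contains a neighborhood of $0$ in $K$. Hence $D\Gamma\equiv0$ on a neighborhood of $0$, and $\Gamma$ is constant there. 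Since $\Gamma(0)=E(0)=0$, we get $\Gamma\equiv0$.

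The main obstacle I expect is Step 1's converse inclusion together with the dimension count in Step 3. One must verify that $\pi_K|_S$ is a local diffeomorphism. This uses that $S$ is a manifold whose tangent space equals all of $K$, which forces $\dim S=\dim K$; it is not enough to assume $S\subset c(V)$. This point is handled by the inverse function theorem applied to the $C^2$ map $\pi_K|_S$ on the finite-dimensional manifold $S$.
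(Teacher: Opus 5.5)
Your proof is correct and follows essentially the paper's argument: you identify $\operatorname{Crit}E$ near $0$ with $c(\operatorname{Crit}\Gamma)$ via $D\Gamma(\xi)=\rho(M(c(\xi)))$, and for the forward direction you use $T_0S=K$ and the inverse function theorem for the projection onto $K$ to conclude $D\Gamma\equiv0$ near $0$. The only difference is cosmetic. You place critical points on the graph of $\psi$ using the transverse estimate \eqref{eq:z_bound}, whereas the paper uses the uniqueness clause of the implicit function theorem; your version is equally valid.
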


\begin{proof}
Recall that $K = \ker L$ and $X = K \oplus Z$, and that the Lyapunov--Schmidt parametrization is
$c(\xi) = \xi + \psi(\xi)$ for  $\xi \in V \subseteq K$, with $R M(c(\xi)) = 0$. 
Hence $M(c(\xi)) \in Y_K$. By the gradient identity of Lemma~\ref{lem:gradient_identity},
$D\Gamma(\xi) = \rho(M(c(\xi))) \in K^*$. 
Since $\rho:Y_K\to K^*$ is an isomorphism, $D\Gamma(\xi)=0$ if and only if $M(c(\xi))=0$.

\medskip
Assume first that $\Gamma \equiv 0$ near $0$. Then $D\Gamma(\xi) = 0$ for every sufficiently small $\xi \in K$. By this equivalence,
we know $M(c(\xi)) = 0$. Thus $c(V) \subseteq \operatorname{Crit}E$, where $\operatorname{Crit}E = \{x \in U : M(x) = 0\}$.
Conversely, let $x \in \operatorname{Crit}E$ be sufficiently close to $0$. Since $X=K\oplus Z$, we may write uniquely $x=\xi+z$ with $\xi\in K$ and $z\in Z$.
Since $M(x) = 0$, we have in particular $R M(\xi + z) = 0$. 
By the uniqueness part of the implicit function theorem defining $\psi$, the unique sufficiently small $z \in Z$ satisfying this equation is
$z = \psi(\xi)$. Therefore $x = c(\xi)$. Consequently, $\operatorname{Crit}E = c(V)$ near $0$.

\smallskip
Since $c$ has the form $c(\xi) = \xi + \psi(\xi),  \psi(0) = 0$ and $D\psi(0) = 0$, 
it is a real analytic embedding of $V$ as a finite-dimensional submanifold of $X$, since the projection $P:X=K\oplus Z\to K$ satisfies $P\circ c=\operatorname{id}_V$. Moreover, $T_0\operatorname{Crit}E=Dc(0)(K)=K$ because $D\psi(0)=0$.
So, the critical set is a finite-dimensional real analytic submanifold near $0$ and
$T_0 \operatorname{Crit}E = \ker L$. Thus $E$ is Morse--Bott at $0$.

\medskip
Conversely, assume that $E$ is Morse--Bott at $0$. Let $\mathcal{C} := \operatorname{Crit}E$. 
By the Morse--Bott assumption, after shrinking the neighborhood of $0$, $\mathcal{C}$ is a finite-dimensional $C^2$ submanifold of $X$ satisfying
$T_0 \mathcal{C} = K$. In particular, $\dim \mathcal{C} = \dim K$.

\medskip
Let $P : X = K \oplus Z \longrightarrow K$
be the continuous projection associated with the chosen splitting. Consider its restriction
$P|_{\mathcal{C}} : \mathcal{C} \longrightarrow K$. At the origin, $D(P|_{\mathcal C})_0=P|_{T_0\mathcal C}=P|_K=\operatorname{id}_K$.
Since $\mathcal{C}$ and $K$ have the same finite dimension, the finite-dimensional inverse function theorem implies that, after shrinking $\mathcal{C}$, the map
$P|_{\mathcal{C}} : \mathcal{C} \longrightarrow V'$
is a local diffeomorphism onto a neighborhood $V' \subset K$ of $0$.

\medskip
Thus, for every $\xi \in V'$, there is a unique critical point $x_\xi \in \mathcal{C}$ such that
$P x_\xi = \xi$. Writing $x_\xi=\xi+z_\xi$ with $z_\xi\in Z$ and using $M(x_\xi)=0$, we obtain $RM(\xi+z_\xi)=0$.
By the uniqueness in the implicit function theorem defining the Lyapunov--Schmidt correction,
$z_\xi = \psi(\xi)$. Hence $x_\xi = c(\xi)$. Since $x_\xi$ is critical,
$M(c(\xi)) = 0$ for every $\xi \in V'$.
Applying the gradient identity, we have $D\Gamma(\xi) = \rho(M(c(\xi))) = 0$
on $V'$. Therefore $\Gamma$ is constant on the connected neighborhood $V'$. Since
$\Gamma(0) = E(c(0)) = E(0) = 0$, we conclude that
$\Gamma \equiv 0$ on $V'$. The proof is complete.
\end{proof}

%The previous result implies a Morse--Bott dichotomy as stated next.

Feehan~\cite[Theorems~1 and~2]{Feehan2020} proves converse results showing, under his analytic Banach hypotheses, that a \L{}ojasiewicz exponent $1/2$ forces the Morse--Bott property. In the present reduction, the additional gap below is an elementary consequence of the vanishing quadratic residual and Theorem~\ref{thm:order_bound}.

\begin{corollary} \label{cor:order_bound}
Assume the hypotheses of Theorem~\ref{thm:exact_reduction}. Exactly one of the following alternatives holds:

\smallskip
\noindent\textnormal{(i)} \quad $\Gamma\equiv0$ near $0$. In this case $E$ is Morse--Bott at $0$ and
$\theta_{\mathrm{LS}}(E,0)=\theta_R(\Gamma,0)=1/2$.

\smallskip
\noindent\textnormal{(ii)} \quad $\Gamma\not\equiv0$. If $m=\operatorname{ord}_0\Gamma$, then
$m\ge3$ and
$$
\theta_{\mathrm{LS}}(E,0)=\theta_R(\Gamma,0)\ge\frac{m-1}{m}\ge\frac23.
$$
In particular, $\theta_{\mathrm{LS}}(E,0)\notin(1/2,2/3)$.
\end{corollary}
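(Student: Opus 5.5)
The plan is to split according to whether $\Gamma$ vanishes identically near $0$. These two alternatives are complementary because $\Gamma$ is real analytic on a connected neighborhood, so exactly one of (i) and (ii) holds. Everything else is assembled from Theorem~\ref{thm:exact_reduction}, Theorem~\ref{thm:order_bound} and Proposition~\ref{prop:morse_bott_reduction}.

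\textbf{Case (i).} Suppose $\Gamma\equiv0$ near $0$. Proposition~\ref{prop:morse_bott_reduction} gives that $E$ is Morse--Bott at $0$. For the exponent, note that $\Gamma(\xi)=0$ for all small $\xi$. Hence the inequality $\|D\Gamma(\xi)\|_{K^*}\ge C|\Gamma(\xi)|^{\theta}$ holds trivially for every $\theta\in[1/2,1)$. The admissible set is therefore all of $[1/2,1)$, and $\theta_R(\Gamma,0)=1/2$. Theorem~\ref{thm:exact_reduction} then transfers this value to $\theta_{\mathrm{LS}}(E,0)$. One point to check is that the infimum is attained with the paper's convention: the definition restricts to $\theta\ge1/2$, so $1/2$ itself is admissible and no limiting argument is needed.

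\textbf{Case (ii).} Suppose $\Gamma\not\equiv0$. Theorem~\ref{thm:order_bound} applies directly. It gives $m\ge3$ and
$$\theta_{\mathrm{LS}}(E,0)=\theta_R(\Gamma,0)\ge\frac{m-1}{m}.$$
Since $m\mapsto (m-1)/m$ is increasing, $m\ge3$ gives $(m-1)/m\ge 2/3$.

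\textbf{The gap.} In case (i) the exponent equals $1/2$, and in case (ii) it is at least $2/3$. Hence in either case $\theta_{\mathrm{LS}}(E,0)\notin(1/2,2/3)$.

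The only mildly delicate step is the exclusivity and exhaustiveness of the dichotomy. This is immediate once one fixes a connected neighborhood $V$ on which $\Gamma$ is defined. There, "$\Gamma\equiv0$ near $0$" and "$\Gamma\not\equiv0$" (meaning the germ is nonzero, so $\operatorname{ord}_0\Gamma$ is finite) are negations of each other by the identity theorem for real analytic germs. Nothing else should be an obstacle.
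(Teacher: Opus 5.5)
Your proof is correct and follows the paper's argument: Proposition~\ref{prop:morse_bott_reduction} for the Morse--Bott claim in case (i), and Theorem~\ref{thm:order_bound} for case (ii). The only difference is cosmetic. In case (i) you get admissibility of $1/2$ by noting that the reduced inequality is vacuous when $\Gamma\equiv0$ and then invoking Theorem~\ref{thm:exact_reduction}. The paper instead reads it off directly from $|E(x)|\le C\|M(x)\|_Y^2$ in Lemma~\ref{lem:comparison}, which is exactly the estimate Step~1 of that theorem rests on.
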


\begin{proof}
If $\Gamma\equiv0$, Proposition~\ref{prop:morse_bott_reduction} shows that $E$ is Morse--Bott. Moreover,
Lemma~\ref{lem:comparison} gives $|E(x)|\le C\|M(x)\|_Y^2$, so $1/2$ is admissible and hence
$\theta_{\mathrm{LS}}(E,0)=1/2$. Conversely, if $E$ is Morse--Bott, Proposition~\ref{prop:morse_bott_reduction}
gives $\Gamma\equiv0$. If $\Gamma\not\equiv0$, Theorem~\ref{thm:order_bound} gives
$m\ge3$ and $\theta_{\mathrm{LS}}(E,0)\ge(m-1)/m\ge2/3$.
\end{proof}

Section \ref{sec:framework} has therefore reduced the optimal Łojasiewicz--Simon exponent exactly to the finite-dimensional germ $\Gamma$, and the first nonzero homogeneous term of $\Gamma$ already imposes a universal lower bound. At this stage, however, $\Gamma$ is still constructed using a choice of compatible splitting. Before attaching algebraic or valuative invariants to the reduced germ, we must show that its singularity type is independent of those auxiliary choices in a sufficiently rigid sense. This is the purpose of the next section.

\section{Intrinsic reduction and the Simon pair}\label{sec:intrinsic}

The splitting lemma and the finite-dimensional residual germ attached to a degenerate critical point are classical. In particular, Feehan's Banach splitting theorem for degenerate critical points~\cite[Theorem~5]{Feehan2020}, together with invariance under a nondegenerate quadratic summand~\cite[Lemma~3.3]{Feehan2020}, shows that the residual germ carries the same \L{}ojasiewicz exponent as the full germ in his setting. In finite-dimensional singularity theory, uniqueness of the residual part up to right equivalence belongs to the splitting-lemma picture; Greuel--Pfister~\cite{GreuelPfister2026} give a recent treatment, including convergent real and complex analytic power series.

There are nevertheless two reasons to revisit uniqueness in our setting. First, the gradient in Section~\ref{sec:framework} is a map $M:X\to Y$ and the Hessian is an operator $L:X\to Y$ defined relative to a pairing, rather than a self-adjoint endomorphism of a Hilbert space. The transverse equation therefore depends simultaneously on a complement $Z\subset X$ and on a compatible complement $Y_K\subset Y$. Second, the finite-dimensional space on which every reduction is written is not an abstract residual space: it is the fixed canonical vector space $K=\ker L$. Ordinary uniqueness up to right equivalence would identify the singularity only after an arbitrary linear change of coordinates on $K$. For the leading obstruction used later, we need to know whether the linear part of that change can be taken to be the identity. The theorem below gives exactly this stronger conclusion by deforming one compatible splitting to the other.

\subsection{Independence of the compatible reduction}

\begin{theorem}
\label{thm:right-equivalence}
Assume the hypotheses of Section~\ref{sec:framework}. Let
$X=K\oplus Z_0=K\oplus Z_1$ be two closed complements of $K=\ker L$, and suppose that for $i=0,1$ there are compatible splittings
$Y=\operatorname{Ran}L\oplus Y_{K,i}$ with $Y_{K,i}=Z_i^\circ$ and restriction maps $\rho_i:Y_{K,i}\to K^*$ isomorphisms. If $\Gamma_0,\Gamma_1:(K,0)\to(\mathbb R,0)$ are the corresponding reduced energies, then there is a real analytic local diffeomorphism $\varphi:(K,0)\to(K,0)$ such that
$$
\Gamma_1\circ\varphi=\Gamma_0,\qquad \varphi(0)=0,\qquad D\varphi(0)=\operatorname{id}_K.
$$
\end{theorem}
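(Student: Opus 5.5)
The plan is to join the two compatible reductions by a real-analytic one-parameter family of compatible reductions $\Gamma_t$, $t\in[0,1]$, and then run a Moser-type homotopy argument. The flow of a time-dependent vector field on $K$ that vanishes to second order at $0$ will carry $\Gamma_0$ to $\Gamma_1$.

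\emph{Step 1: interpolating the splittings.} Since $Z_1$ is a closed complement of $K$, it is the graph $Z_1=\{z+Az: z\in Z_0\}$ of a bounded linear map $A:Z_0\to K$. Set $Z_t:=\{z+tAz: z\in Z_0\}$, which is a closed complement of $K$ for every $t$. For $f\in K^*$ put $y_t(f):=(1-t)\rho_0^{-1}f+t\rho_1^{-1}f$, and define $Y_{K,t}:=\{y_t(f): f\in K^*\}$.

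\begin{itemize}
\item $Y_{K,t}$ annihilates $Z_t$. Write $y_i:=\rho_i^{-1}f$. Then $\langle y_0,z\rangle=0$ and $\langle y_i,Az\rangle=f(Az)$. Since $y_1$ annihilates $z+Az$, we get $\langle y_1,z\rangle=-f(Az)$. Hence
$\langle y_t(f),z+tAz\rangle=t(1-t)f(Az)-tf(Az)+t^2f(Az)=0$.
\item The restriction $\rho_t(y_t(f))=f$, so $\rho_t$ is an isomorphism $Y_{K,t}\to K^*$.
\item $Y_{K,t}\cap K^\circ=\{0\}$. By Lemma~\ref{lem:compatibility-consequences} we have $K^\circ=\operatorname{Ran}L$, and the dimension of $Y_{K,t}$ equals $\operatorname{codim}\operatorname{Ran}L$. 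Therefore $Y=\operatorname{Ran}L\oplus Y_{K,t}$ topologically.
\item $Y_{K,t}=Z_t^\circ$. Given $y\in Z_t^\circ$, the element $y-y_t(\rho(y))$ annihilates both $Z_t$ and $K$, hence all of $X$. It is therefore $0$, because the pairing separates points.
\end{itemize}

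So each $t$ gives a compatible splitting, and it agrees with the given ones at $t=0,1$. The projections $R_t$ and $P_t:X\to K$ along $Z_t$ depend polynomially, hence analytically, on $t$. The analytic implicit function theorem with the parameter $t$ then yields corrections $\psi_t(\xi)\in Z_t$. These are jointly real analytic in $(t,\xi)$ on a neighborhood of $[0,1]\times\{0\}$, by compactness of $[0,1]$ and uniqueness. They satisfy $\psi_t(0)=0$ and $D_\xi\psi_t(0)=0$. Define $c_t(\xi)=\xi+\psi_t(\xi)$ and $\Gamma_t=E\circ c_t$. The main technical care lies here: one must make the implicit function theorem uniform in $t$, and keep $\psi_t$ a function of $\xi\in K$ even though the complement $Z_t$ moves. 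To handle the latter, I would solve for $w\in Z_0$ in $R_tM(\xi+w+tAw)=0$.

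\emph{Step 2: the homotopy identity.} Fix $\xi$ and decompose $\partial_t c_t(\xi)=\partial_t\psi_t(\xi)=P_t\partial_t\psi_t(\xi)+(1-P_t)\partial_t\psi_t(\xi)$, with the second summand in $Z_t$. Since $M(c_t(\xi))\in Y_{K,t}=Z_t^\circ$, the $Z_t$ summand pairs to zero. The argument of Lemma~\ref{lem:gradient_identity}, applied to the $t$-splitting, then gives
\[
\partial_t\Gamma_t(\xi)=\langle M(c_t(\xi)),P_t\partial_t\psi_t(\xi)\rangle_{Y,X}=D\Gamma_t(\xi)\bigl[P_t\partial_t\psi_t(\xi)\bigr].
\]
Set $V_t(\xi):=-P_t\partial_t\psi_t(\xi)$. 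This is analytic in $(t,\xi)$, and it is $O(|\xi|^2)$ uniformly in $t$ because $\psi_t(0)=0$ and $D\psi_t(0)=0$ for all $t$.

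\emph{Step 3: the flow.} Let $\varphi_t$ solve $\partial_t\varphi_t=-V_t(\varphi_t)$ backward from $\varphi_1=\mathrm{id}$; equivalently, I want $\Gamma_t\circ\chi_t$ to be independent of $t$ for a suitable flow $\chi_t$. Take the flow $\chi_t$ of the field $V_t$ starting at $\chi_0=\mathrm{id}$. Then
\[
\tfrac{d}{dt}\Gamma_t(\chi_t)=\partial_t\Gamma_t(\chi_t)+D\Gamma_t(\chi_t)[V_t(\chi_t)]=0,
\]
so $\Gamma_1\circ\chi_1=\Gamma_0$. The field $V_t$ vanishes at $0$ to second order, uniformly in $t$. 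Hence the flow exists for all $t\in[0,1]$ on a fixed neighborhood of $0$, fixes $0$, and the linearized flow at $0$ is trivial, giving $D\chi_1(0)=\mathrm{id}_K$. Analytic dependence of solutions of analytic ODEs makes $\chi_1$ real analytic. Taking $\varphi:=\chi_1$ then gives $\Gamma_1\circ\varphi=\Gamma_0$, $\varphi(0)=0$ and $D\varphi(0)=\mathrm{id}_K$, as required.

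I expect the interpolation in Step~1 to be the crux. Once a compatible family is in hand, Steps 2 and 3 are a standard Moser argument.
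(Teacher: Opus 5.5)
Your proposal is correct and is essentially the paper's proof: your interpolation $y_t(f)=(1-t)\rho_0^{-1}f+t\rho_1^{-1}f$ is exactly the paper's $s_0(\lambda)+tH\lambda$, and the parametrized implicit function theorem on $Z_0$ and the Moser-type flow are the same. The one small difference is that you split $\partial_t c_t(\xi)$ along $K\oplus Z_t$ and then use the gradient identity. This gives the same vector field as the paper's splitting along $D_\xi c_t(\xi)(K)\oplus Z_t$, because $D_\xi\psi_t(\xi)$ takes values in $Z_t$, but it avoids the extra transversality shrinking and makes $DV_t(0)=0$ immediate.
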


\begin{proof}
Because $Z_0$ and $Z_1$ complement the same finite-dimensional space $K$, there is a bounded linear map $S:Z_0\to K$ such that $Z_1=\{z+Sz:z\in Z_0\}$. Put $T_tz=z+tSz$ and $Z_t=T_tZ_0$ for $0\le t\le1$; then $X=K\oplus Z_t$ for every $t$.

Let $q:Y\to K^*$ be restriction to $K$ and let $s_0=\rho_0^{-1}:K^*\to Y_{K,0}$. Since $q$ vanishes on $\operatorname{Ran}L=K^\circ$, every element of $Y_{K,1}$ is uniquely of the form $s_0(\lambda)+H\lambda$, where $H:K^*\to\operatorname{Ran}L$ is linear. The condition $Y_{K,1}=Z_1^\circ$ gives, for $z\in Z_0$,
$\langle H\lambda,z\rangle_{Y,X}=-\lambda(Sz)$. Define
$Y_{K,t}=\{s_0(\lambda)+tH\lambda:\lambda\in K^*\}$. Then $Y_{K,t}\subseteq Z_t^\circ$. Conversely, restriction $q:Z_t^\circ\to K^*$ is injective: an element in its kernel annihilates both $K$ and $Z_t$, hence all of $X$, and is zero because the pairing separates points of $Y$. Thus $\dim Z_t^\circ\le\dim K$, while $Y_{K,t}\subseteq Z_t^\circ$ already has dimension $\dim K$. Therefore $Y_{K,t}=Z_t^\circ$. Moreover, $q|_{Y_{K,t}}:Y_{K,t}\to K^*$ is an isomorphism. Given $y\in Y$, choose $y_K\in Y_{K,t}$ with $q(y_K)=q(y)$. Then $q(y-y_K)=0$, so $y-y_K\in K^\circ=\operatorname{Ran}L$ by Lemma~\ref{lem:compatibility-consequences}. Since $q$ is injective on $Y_{K,t}$, also $Y_{K,t}\cap\operatorname{Ran}L=\{0\}$. Hence
$$
Y=\operatorname{Ran}L\oplus Y_{K,t}.
$$

Let $R_t$ be the projection onto $\operatorname{Ran}L$ along $Y_{K,t}$. Relative to $Y=\operatorname{Ran}L\oplus Y_{K,0}$ one has $R_t=R_0-tHq$, so $R_t$ depends analytically on $t$. Consider
$F(t,\xi,z)=R_tM(\xi+T_tz)$ for $\xi\in K$ and $z\in Z_0$. Since $LSz=0$,
$D_zF(t,0,0)z=R_tLT_tz=Lz$, the fixed isomorphism $L|_{Z_0}:Z_0\to\operatorname{Ran}L$. The analytic implicit function theorem with parameter gives such a jointly analytic solution on a neighborhood of each $t_0\in[0,1]$. By uniqueness these local families agree on overlaps; compactness of $[0,1]$ and a finite subcover allow the $K$-neighborhood to be chosen uniformly in $t$. Thus there is a common neighborhood of $0\in K$ and a jointly analytic family $\psi_t(\xi)\in Z_0$ satisfying
$$
\psi_t(0)=0,
\qquad
R_tM(\xi+T_t\psi_t(\xi))=0.
$$

Set $c_t(\xi)=\xi+T_t\psi_t(\xi)$ and $\Gamma_t=E\circ c_t$. At $t=0,1$ these are the two given reductions. Differentiating the reduction equation at $\xi=0$ gives $D_\xi\psi_t(0)=0$, hence $c_t(0)=0$ and $D_\xi c_t(0)=\operatorname{id}_K$. At $\xi=0$ one has $D_\xi c_t(0)(K)=K$, so $X=D_\xi c_t(0)(K)\oplus Z_t$ for every $t\in[0,1]$. Complementarity is an open condition, and compactness of $[0,1]$ therefore permits the $K$-neighborhood to be shrunk once so that
$X=D_\xi c_t(\xi)(K)\oplus Z_t$ for all $t\in[0,1]$ and all $\xi$ in that common neighborhood. Hence there are unique jointly analytic maps $V_t(\xi)\in K$ and $w_t(\xi)\in Z_t$ such that
$\partial_tc_t(\xi)=D_\xi c_t(\xi)V_t(\xi)+w_t(\xi)$. Since $M(c_t(\xi))\in Y_{K,t}=Z_t^\circ$, the $w_t$ term is annihilated and
$\partial_t\Gamma_t(\xi)=D\Gamma_t(\xi)[V_t(\xi)]$.

Let $\varphi_t$ solve the finite-dimensional time-dependent ODE
$\dot\varphi_t=-V_t\circ\varphi_t$, $\varphi_0=\operatorname{id}$. After shrinking the initial neighborhood, the flow exists for $0\le t\le1$ and
$\frac{d}{dt}\Gamma_t(\varphi_t(\xi))=0$. Hence $\Gamma_t\circ\varphi_t=\Gamma_0$ and, in particular, $\Gamma_1\circ\varphi_1=\Gamma_0$.

Finally, $c_t(0)=0$ implies $V_t(0)=0$. Differentiate
$$
\partial_tc_t(\xi)=D_\xi c_t(\xi)V_t(\xi)+w_t(\xi)
$$
at $\xi=0$. Because $D_\xi c_t(0)=\operatorname{id}_K$ is independent of $t$, the derivative of the left-hand side is zero. The resulting identity is the sum of a vector in $K$ and a vector in $Z_t$; since $X=K\oplus Z_t$, both components vanish, and in particular $DV_t(0)=0$. The variational equation for the flow therefore gives $D\varphi_t(0)=\operatorname{id}_K$. Taking $\varphi=\varphi_1$ completes the proof.
\end{proof}

\begin{definition}
The \emph{reduced Simon singularity} of $(E,0)$ is the analytic right-equivalence class
$$
\mathfrak S(E,0):=[\Gamma]_{\mathcal R}
$$
of any compatible Lyapunov--Schmidt reduced energy.
\end{definition}

\begin{remark}[Intrinsic consequences]\label{rem:intrinsic-consequences}
Because the right-equivalence in Theorem~\ref{thm:right-equivalence} is tangent to the identity, the order $\operatorname{ord}_0\Gamma$ and the first nonzero homogeneous term $\Gamma_m$ are literally independent of the compatible reduction on the fixed space $K=\ker L$. Moreover, the chain rule gives $J(\Gamma_0)=\varphi^*J(\Gamma_1)$ and $(\Gamma_0)=\varphi^*(\Gamma_1)$, so the analytic isomorphism class of the Simon pair introduced below is intrinsic. Newton polyhedra themselves depend on coordinates and will be used only as computational presentations of this intrinsic pair.
\end{remark}

Theorem \ref{thm:right-equivalence} is what allows us to pass from Lyapunov--Schmidt reduction to singularity theory without retaining the auxiliary choices made in the Banach-space splitting. We may henceforth regard $\Gamma$ not as a particular reduced representative, but as an intrinsic finite-dimensional singularity attached to $(E,0)$. We now identify the algebraic datum that records exactly the relation between its energy and its gradient.

\subsection{The intrinsic Simon pair}

Fix linear coordinates $K\cong\mathbb R^r$ and let $A_{\mathbb R}=\mathbb R\{x_1,\ldots,x_r\}$. For an ideal $I=(g_1,\ldots,g_s)$ and a real analytic arc $\gamma:(\mathbb R,0)\to(\mathbb R^r,0)$, write $\nu_\gamma(I)=\min_j\operatorname{ord}_t(g_j\circ\gamma)$. We use the real integral closure characterized by the real curve criterion,
$h\in\overline I^{\,\mathbb R}$ if and only if $\operatorname{ord}_t(h\circ\gamma)\ge\nu_\gamma(I)$ for every real analytic arc $\gamma$; equivalently, $|h|\le C\max_j|g_j|$ near the origin; see Gaffney--Ara\'ujo dos Santos~\cite[Definition~2.1 and Proposition~2.2]{GaffneyAraujo2010}. For $0\ne h\in A_{\mathbb R}$ with $h(0)=0$, set
$$
\mathcal L^{\mathbb R}_{(h)}(I)
:=\inf\left\{\frac qp\in\mathbb Q_{>0}:h^q\in\overline{I^p}^{\,\mathbb R}\right\}.
$$

For a nonzero reduced energy $\Gamma$, define its \emph{Simon pair} by
$$
\mathcal P(\Gamma):=(J(\Gamma),(\Gamma)),
\qquad
J(\Gamma)=\left(\frac{\partial\Gamma}{\partial x_1},\ldots,
\frac{\partial\Gamma}{\partial x_r}\right).
$$
The name is new notation for the classical relative-ideal problem naturally selected by Simon's gradient inequality: $J(\Gamma)$ measures the gradient, while $(\Gamma)$ measures the energy. By Theorem~\ref{thm:right-equivalence}, its analytic isomorphism class is intrinsic to $(E,0)$.

The following is the central bridge from the Banach functional to the classical valuative theory. The arc/inequality equivalence for real integral closure is due to Gaffney--Ara\'ujo dos Santos~\cite{GaffneyAraujo2010}; the real-analytic blowup and rationality mechanism is developed by Bochnak--Risler and in Risler's appendix to Lejeune-Jalabert--Teissier~\cite{BochnakRisler1975,LejeuneTeissier2008}. The finite-max viewpoint is also a special case of the modern relative-threshold framework in \cite{Ha2026}.

\begin{theorem}
\label{thm:Simon-pair-formula}
Assume the hypotheses of Section~\ref{sec:framework} and suppose $\Gamma\not\equiv0$. Let $\pi:\widetilde K\to(K,0)$ be a common real analytic principalization of $J(\Gamma)$ and $(\Gamma)$, whose existence follows from real-analytic principalization in characteristic zero, for example from Bierstone--Milman~\cite{BierstoneMilman1997}, and let $F_1,\ldots,F_\ell$ be the irreducible components of the resulting normal-crossing divisor whose images contain $0$. Then
$$
\theta_{\mathrm{LS}}(E,0)
=\mathcal L^{\mathbb R}_{(\Gamma)}(J(\Gamma))
=\sup_\gamma\frac{\nu_\gamma(J(\Gamma))}{\nu_\gamma(\Gamma)}
=\max_{\operatorname{ord}_{F_i}(\Gamma)>0}
\frac{\operatorname{ord}_{F_i}(J(\Gamma))}{\operatorname{ord}_{F_i}(\Gamma)},
$$
where the supremum is over real analytic arcs with $\Gamma\circ\gamma\not\equiv0$. In particular, the optimal Banach \L{}ojasiewicz--Simon exponent is rational and is attained by a divisorial order on the intrinsic reduced singularity.
\end{theorem}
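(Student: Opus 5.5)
By Theorem~\ref{thm:exact_reduction}, $\theta_{\mathrm{LS}}(E,0)=\theta_R(\Gamma,0)$. So everything reduces to the finite-dimensional germ $\Gamma$ on $K\cong\mathbb R^r$, and the plan is to prove a chain of equalities there:
\[
\theta_R \;=\; \mathcal L^{\mathbb R}_{(\Gamma)}(J(\Gamma)) \;=\; \sup_\gamma \frac{\nu_\gamma(J)}{\nu_\gamma(\Gamma)} \;=\; \max_i \frac{\operatorname{ord}_{F_i}J}{\operatorname{ord}_{F_i}\Gamma}.
\]
Norms on $K^*$ are all equivalent. Hence $\|D\Gamma(\xi)\|\asymp\max_j|\partial_j\Gamma(\xi)|$, and admissibility of $\theta$ means exactly $|\Gamma|^\theta\le C\max_j|\partial_j\Gamma|$ near $0$.

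\emph{Admissibility and the arc supremum.} For $\theta=q/p$ rational, raise the admissibility inequality to the power $p$. It becomes $|\Gamma^q|\le C\max_j|\partial_j\Gamma|^p$, which is comparable to $\max$ over the generators of $J^p$. By the Gaffney--Ara\'ujo dos Santos criterion, this says $\Gamma^q\in\overline{J^p}^{\,\mathbb R}$, which in turn is equivalent to $q\,\nu_\gamma(\Gamma)\ge p\,\nu_\gamma(J)$ for every real arc $\gamma$.

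Arcs with $\Gamma\circ\gamma\equiv0$ satisfy this trivially. Every other arc has $\nu_\gamma(\Gamma)<\infty$, and $\nu_\gamma(J)<\infty$ as well: if all partials vanished along $\gamma$, then $\Gamma$ would be constant on $\gamma$, hence zero. So the set of admissible rational exponents is exactly the set of $q/p\ge \sup_\gamma \nu_\gamma(J)/\nu_\gamma(\Gamma)$. Admissible exponents form an up-closed set, so taking infima gives
\[
\theta_R=\mathcal L^{\mathbb R}_{(\Gamma)}(J)=\sup_\gamma\frac{\nu_\gamma(J)}{\nu_\gamma(\Gamma)}.
\]
One caveat needs care: the definition of $\theta_R$ restricts to $\theta\in[1/2,1)$. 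The supremum is at least $(m-1)/m\ge 2/3$ by Theorem~\ref{thm:order_bound}, so no conflict arises at the lower end. At the upper end I expect to show the supremum is $<1$, using the classical \L{}ojasiewicz gradient inequality or directly from the divisorial formula below.

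\emph{Sup equals max over divisors.} Take the principalization $\pi$. Near each point of $\pi^{-1}(0)$ there are normal-crossing coordinates $u$ in which
\[
\pi^*J=(u^{a}),\qquad \pi^*\Gamma=\text{unit}\cdot u^{b}.
\]
Any arc $\gamma$ with $\Gamma\circ\gamma\not\equiv0$ lifts to an arc $\tilde\gamma$ on $\widetilde K$ centered at such a point, because $\pi$ is proper and real-analytic, so a curve lifts. Writing $\operatorname{ord}_t u_k\circ\tilde\gamma=e_k\ge0$, we get
\[
\frac{\nu_\gamma(J)}{\nu_\gamma(\Gamma)}=\frac{\sum a_ke_k}{\sum b_ke_k}.
\]
This is a mediant of the ratios $a_k/b_k$ over those $k$ with $e_k>0$. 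Only components passing through the center contribute. Components with $b_k=0$ can be handled by noting that $a_k\le b_k$ for them, as I must check; alternatively, one uses that $J\subset\overline{\mathfrak m\cdot}$ is contained in the radical of $(\Gamma)$ near the fiber. The mediant is therefore bounded by $\max_i a_i/b_i$ over components $F_i$ with $\operatorname{ord}_{F_i}\Gamma>0$ and $0\in\pi(F_i)$.

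Conversely, take a point of $F_i$ off all other components and the transverse arc $u_i=t$. Its image lies in $K$, passes through $0$, and realizes the ratio exactly. This gives equality and shows the supremum is attained and rational.

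The $b_k=0$ issue is where I expect the main obstacle: ensuring that components along which $\Gamma$ does not vanish but $J$ does cannot inflate the ratio. I would handle it by choosing the principalization so that it also principalizes the product $J\cdot(\Gamma)$ and is an isomorphism off $V(\Gamma)$. Alternatively, I would observe that the \L{}ojasiewicz inequality $|\nabla\Gamma|\ge c|\Gamma|^{\theta}$, with $\theta<1$ by the classical theorem, forces $e\cdot a\le e\cdot b\,\theta$ along every arc. This gives each ratio $<1$, which in turn confirms that the maximum lies in $[2/3,1)$.
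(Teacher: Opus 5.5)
\textbf{Overall.} Your first half follows the paper's argument: exact reduction, $\|D\Gamma\|\asymp\max_j|\partial_j\Gamma|$, the Gaffney--Ara\'ujo dos Santos criterion, passage to the arc supremum, and the $[1/2,1)$ bookkeeping. For the upper bound $\sup_\gamma\le\max_i$ you take a different and lighter route. You lift arcs through $\pi$ and read $\nu_\gamma(J(\Gamma))/\nu_\gamma(\Gamma)$ as a mediant of divisorial ratios over the components through the centre. The paper instead shows that the pulled-back quotient is locally bounded, which needs a two-sided comparison between the principal generator and the $g_j\circ\pi$, plus compactness of $\pi^{-1}(0)$.

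\textbf{Components with $\operatorname{ord}_{F_k}\Gamma=0$.} Your treatment via the classical \L{}ojasiewicz inequality is sound. Arcs centred on $F_k\cap\pi^{-1}(0)$ with contact $e_k\to\infty$ force $\operatorname{ord}_{F_k}J(\Gamma)=0$. This makes explicit a point the paper's proof leaves implicit. Your other alternative, a principalization that is an isomorphism off $V(\Gamma)$, does not by itself exclude strict transforms of components of $V(J(\Gamma))$ lying outside $V(\Gamma)$; it rests on the same fact.

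\textbf{The gap is in the reverse inequality.} You take a point of $F_i$ off all other components, use the transverse arc $u_i=t$, and assert that its image passes through $0$. That holds only if the chosen point lies in $\pi^{-1}(0)$, which in general requires $F_i\subset\pi^{-1}(0)$. The maximum in the statement runs over all components whose image contains $0$. This includes strict transforms of components of $V(\Gamma)$ and exceptional divisors over positive-dimensional centres through $0$. For such an $F_i$, a generic point maps to some $x_0\neq0$, and $F_i\cap\pi^{-1}(0)$ may lie entirely on other components.

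For example, take $\Gamma=x^3y$. One blow-up of the origin principalizes both $J(\Gamma)$ and $(\Gamma)$. The strict transform of $\{x=0\}$ meets $\pi^{-1}(0)$ only on the exceptional curve, so its transverse arcs are centred at points $(0,y_0)$ with $y_0\neq0$. Your arc therefore computes a ratio at $x_0$, not at $0$. Neither $\max_i\le\sup_\gamma$ nor your claim that the supremum is attained by an arc follows.

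\textbf{Two ways to repair it.}
\begin{enumerate}
\item Choose $P\in F_i\cap\pi^{-1}(0)$. Lift arcs with order $N$ along $F_i$ and order $1$ along the other divisor components through $P$; these arcs lie off the divisor for $t\neq0$. The mediants converge to $\operatorname{ord}_{F_i}J(\Gamma)/\operatorname{ord}_{F_i}\Gamma$ as $N\to\infty$, so $\sup_\gamma$ is at least this ratio. This gives the inequality but not attainment.
\item Argue as the paper does. An admissible inequality $|\Gamma|^q\le C\max_j|\partial_j\Gamma|^p$ holds on a whole neighbourhood of $0$. Its pullback therefore holds near $P$, in particular at generic points of $F_i$ near $P$. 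This forces $q\operatorname{ord}_{F_i}\Gamma\ge p\operatorname{ord}_{F_i}J(\Gamma)$.
\end{enumerate}
Rationality and divisorial attainment then come from the finite maximum over divisors, not from an extremal arc. You should drop the claim that the arc supremum is attained.
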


\begin{proof}
By Theorem~\ref{thm:exact_reduction}, it is enough to compute the finite-dimensional gradient exponent of $\Gamma$. After choosing coordinates on $K$, all norms on $K^*$ are equivalent, so $\|D\Gamma\|_{K^*}$ is comparable with $\max_j|\partial\Gamma/\partial x_j|$. Hence an exponent $q/p$ is admissible exactly when, after shrinking the neighborhood,
$$
|\Gamma|^q\le C\max_j\left|\frac{\partial\Gamma}{\partial x_j}\right|^p.
$$
By the real integral-closure criterion of Gaffney--Ara\'ujo dos Santos, this is equivalent to
$$
\Gamma^q\in\overline{J(\Gamma)^p}^{\,\mathbb R},
$$
and, by the real curve criterion, to
$$
q\,\nu_\gamma(\Gamma)\ge p\,\nu_\gamma(J(\Gamma))
$$
for every real analytic arc $\gamma$ with $\Gamma\circ\gamma\not\equiv0$. The admissible exponents form an upper interval and the positive rationals are dense, so taking the infimum over rational $q/p$ gives the same infimum as in the definition of $\theta_R$. Therefore
$$
\theta_R(\Gamma,0)
=\mathcal L^{\mathbb R}_{(\Gamma)}(J(\Gamma))
=\sup_\gamma\frac{\nu_\gamma(J(\Gamma))}{\nu_\gamma(\Gamma)}.
$$

It remains to justify the finite divisorial maximum. On a common real analytic principalization $\pi$, around each point of $\pi^{-1}(0)$ there are local coordinates $y_1,\ldots,y_s$ and analytic units $u,v$ such that
$$
\pi^*(\Gamma)=u\,y_1^{a_1}\cdots y_s^{a_s},
\qquad
\pi^{-1}J(\Gamma)\cdot\mathcal O_{\widetilde K}
=(v\,y_1^{b_1}\cdots y_s^{b_s}).
$$
Thus the pulled-back monomial quotient is locally bounded if and only if $qa_j\ge pb_j$ for every divisor component through the chart. To compare the principal generator with the original Jacobian generators, write $J(\Gamma)=(g_1,\ldots,g_N)$. On the chart there are analytic functions $a_j$ such that $g_j\circ\pi=a_jv\,y_1^{b_1}\cdots y_s^{b_s}$. Conversely, because these pullbacks generate the principal ideal $(v\,y_1^{b_1}\cdots y_s^{b_s})$, there are analytic functions $c_j$ such that
$v\,y_1^{b_1}\cdots y_s^{b_s}=\sum_j c_j(g_j\circ\pi)$. Hence, on every relatively compact subchart meeting $\pi^{-1}(0)$,
$$
c\,|v\,y_1^{b_1}\cdots y_s^{b_s}|
\le \max_j|g_j\circ\pi|
\le C\,|v\,y_1^{b_1}\cdots y_s^{b_s}|
$$
for suitable positive constants $c,C$. If the divisor inequalities hold for all components meeting $\pi^{-1}(0)$, the quotient
$$
\frac{|\pi^*\Gamma|^q}{|v\,y_1^{b_1}\cdots y_s^{b_s}|^p}
$$
is locally bounded. Properness of $\pi$ and compactness of the fiber $\pi^{-1}(0)$ give finitely many such charts and uniform constants after shrinking the base. The two-sided comparison above then gives the required inequality in terms of $\max_j|g_j|$ downstairs. Conversely, any analytic inequality downstairs pulls back and forces $qa_j\ge pb_j$ along each component. Therefore
$$
\mathcal L^{\mathbb R}_{(\Gamma)}(J(\Gamma))
=\max_{\operatorname{ord}_{F_i}(\Gamma)>0}
\frac{\operatorname{ord}_{F_i}(J(\Gamma))}{\operatorname{ord}_{F_i}(\Gamma)}.
$$
This is the real-analytic blowup mechanism of Bochnak--Risler and Risler~\cite{BochnakRisler1975,LejeuneTeissier2008}. The maximum is finite and rational, completing the proof.
\end{proof}

\begin{corollary}\label{cor:order-sharpness}
Let $m=\operatorname{ord}_0\Gamma$. Under the hypotheses of Theorem~\ref{thm:Simon-pair-formula},
$$
\theta_{\mathrm{LS}}(E,0)=\frac{m-1}{m}
\quad\Longleftrightarrow\quad
\Gamma^{m-1}\in\overline{J(\Gamma)^m}^{\,\mathbb R}.
$$
\end{corollary}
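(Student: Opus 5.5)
The plan is to combine Theorem~\ref{thm:Simon-pair-formula} with the lower bound of Theorem~\ref{thm:order_bound}. By Theorem~\ref{thm:order_bound}, $\theta_{\mathrm{LS}}(E,0)\ge (m-1)/m$ always. So the equality $\theta_{\mathrm{LS}}(E,0)=(m-1)/m$ is equivalent to the statement that the value $(m-1)/m$ is itself admissible, i.e.\ that the infimum is attained at this rational number. There are two implications to prove.

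For ($\Leftarrow$), take $q=m-1$ and $p=m$. Membership $\Gamma^{m-1}\in\overline{J(\Gamma)^m}^{\,\mathbb R}$ gives, by the Gaffney--Ara\'ujo dos Santos criterion (as used in the proof of Theorem~\ref{thm:Simon-pair-formula}), the inequality $|\Gamma|^{m-1}\le C\max_j|\partial_j\Gamma|^m$ near $0$. The generators of $J(\Gamma)^m$ are products of $m$ partials, so their maximum is comparable to $\max_j|\partial_j\Gamma|^m$. Taking $m$-th roots shows that $(m-1)/m$ is admissible for $\Gamma$. Since $m\ge3$, this exponent lies in $[1/2,1)$. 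Then $\theta_R(\Gamma,0)\le (m-1)/m$, and Theorem~\ref{thm:exact_reduction} together with Theorem~\ref{thm:order_bound} gives equality.

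For ($\Rightarrow$), the issue is that equality of the infimum does not by itself say the infimum is admissible. I would use the finite divisorial maximum in Theorem~\ref{thm:Simon-pair-formula}. If $\theta_{\mathrm{LS}}=(m-1)/m$, then for every exceptional component with $\operatorname{ord}_{F_i}\Gamma>0$ we have
\[
m\operatorname{ord}_{F_i}J(\Gamma)\le (m-1)\operatorname{ord}_{F_i}\Gamma .
\]
By the local monomial comparison in that proof, this gives the bound $|\pi^*\Gamma|^{m-1}\le C|v\,y^{b}|^m$ chart by chart. By properness this bound is uniform near $\pi^{-1}(0)$, and pushing it down gives $|\Gamma|^{m-1}\le C\max_j|\partial_j\Gamma|^m$.

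Alternatively, and more cleanly, I would argue with arcs. The supremum $\sup_\gamma \nu_\gamma(J)/\nu_\gamma(\Gamma)$ equals $(m-1)/m$, so every arc satisfies $m\,\nu_\gamma(J(\Gamma))\le (m-1)\nu_\gamma(\Gamma)$ when $\Gamma\circ\gamma\not\equiv0$. When $\Gamma\circ\gamma\equiv0$ the inequality $\operatorname{ord}(\Gamma^{m-1}\circ\gamma)=\infty\ge \nu_\gamma(J^m)$ is trivial. The real curve criterion then yields $\Gamma^{m-1}\in\overline{J(\Gamma)^m}^{\,\mathbb R}$ directly. Here I use $\nu_\gamma(J^m)=m\,\nu_\gamma(J)$, which holds because the minimum order over products of generators is attained by a power of a minimal-order generator.

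The main obstacle is this attainment in ($\Rightarrow$). A bare infimum statement loses the endpoint, and the argument depends on the arc supremum in Theorem~\ref{thm:Simon-pair-formula} being a genuine pointwise bound for each arc. That is immediate from the definition of a supremum, so the arc route avoids the issue entirely. I would present that route, and mention the divisorial maximum only as confirmation that the exponent is attained.
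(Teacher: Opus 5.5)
Your proposal is correct and follows the paper's own argument: the direction ($\Leftarrow$) is admissibility of $(m-1)/m$ combined with the lower bound of Theorem~\ref{thm:order_bound}, and ($\Rightarrow$) is the attainment of the optimum furnished by Theorem~\ref{thm:Simon-pair-formula}. Your arc-by-arc argument spells out explicitly what the paper compresses into the phrase ``the optimum is attained and hence admissible'': each arc ratio is bounded by the supremum, $\nu_\gamma(J(\Gamma)^m)=m\,\nu_\gamma(J(\Gamma))$, and then the real curve criterion applies.
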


\begin{proof}
The containment says exactly that $(m-1)/m$ is admissible, while Theorem~\ref{thm:order_bound} gives the reverse inequality. Conversely, if the optimal exponent equals $(m-1)/m$, Theorem~\ref{thm:Simon-pair-formula} shows that the optimum is attained and hence admissible, giving the stated containment.
\end{proof}

Theorem \ref{thm:Simon-pair-formula} gives an intrinsic description of the exponent, but not yet a directly computable one: it expresses the answer through real analytic arcs or through a principalization of the Simon pair. We next relate this description to more explicit complex-algebraic and Newton-polyhedral calculations. The essential issue in passing from the complex problem back to the real \L{}ojasiewicz--Simon exponent is whether a valuation computing the complex exponent is visible on the real locus.

\subsection{Complex and Newton computation}

Let $\Gamma_{\mathbb C}\in\mathbb C\{z_1,\ldots,z_r\}$ be the complexification of $\Gamma$ and define the complex relative exponent
$\theta_{\mathbb C}(\Gamma)=\mathcal L^{\mathbb C}_{(\Gamma_{\mathbb C})}(J(\Gamma_{\mathbb C}))$ using ordinary integral closure. Restriction of a complex gradient inequality to the real locus immediately gives
$\theta_{\mathrm{LS}}(E,0)\le\theta_{\mathbb C}(\Gamma)$. By the classical Rees valuative criterion,
$$
\theta_{\mathbb C}(\Gamma)
=\max_v\frac{v(J(\Gamma_{\mathbb C}))}{v(\Gamma_{\mathbb C})},
$$
where $v$ runs through the Rees valuations of $J(\Gamma_{\mathbb C})$; see \cite{HunekeSwanson2006,Ha2026}. Thus the only extra issue in passing from the complex algebraic problem to the actual real \L{}ojasiewicz--Simon exponent is whether the extremal complex valuation is visible on the real locus. One useful sufficient mechanism is completely elementary: if the complex maximum is computed by finitely many monomial valuations in real coordinates, then the real and complex exponents coincide. Indeed, for any positive integral weight $u$, a point $c\in(\mathbb R^*)^r$ can be chosen outside the zero sets of the finitely many $u$-initial forms of $\Gamma$ and the chosen generators of $J(\Gamma)$. The monomial arc $t\mapsto(c_1t^{u_1},\ldots,c_rt^{u_r})$ then realizes the same weighted orders on the real locus. We will use precisely this observation in the Newton case below.

Bivi\`a-Ausina~\cite{Bivia2003} develops Newton-polyhedral estimates for relative \L{}ojasiewicz exponents, while \cite{Bivia2005} studies Newton-nondegenerate ideals, their use in computing \L{}ojasiewicz exponents, and in particular Newton nondegeneracy of Jacobian ideals. Applying the standard monomial/Rees-valuation mechanism to the Simon pair gives the following convenient computational consequence.

\begin{corollary}
\label{cor:Newton-Simon}
Assume $\Gamma_{\mathbb C}$ has an isolated critical point at the origin and $J(\Gamma_{\mathbb C})$ is Newton nondegenerate. Let $\mathcal N(J(\Gamma_{\mathbb C}))$ be the primitive inward normals to the compact facets of the Newton polyhedron of $J(\Gamma_{\mathbb C})$. Then
$$
\theta_{\mathrm{LS}}(E,0)
=\theta_{\mathbb C}(\Gamma)
=\max_{u\in\mathcal N(J(\Gamma_{\mathbb C}))}
\frac{\nu_u(J(\Gamma))}{\nu_u(\Gamma)}.
$$
Moreover, every maximizing monomial valuation in this formula is realized simultaneously on $J(\Gamma)$ and $\Gamma$ by a real analytic monomial arc.
\end{corollary}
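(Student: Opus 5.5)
The plan is to squeeze the real exponent between two quantities that both equal the Newton maximum. The upper half is already available. Theorem~\ref{thm:exact_reduction} and Theorem~\ref{thm:Simon-pair-formula} give $\theta_{\mathrm{LS}}(E,0)=\theta_R(\Gamma,0)=\sup_\gamma \nu_\gamma(J(\Gamma))/\nu_\gamma(\Gamma)$, and restricting a complex gradient inequality to the real locus gives $\theta_{\mathrm{LS}}(E,0)\le\theta_{\mathbb C}(\Gamma)$. It therefore remains to do two things. First, show that $\theta_{\mathbb C}(\Gamma)$ equals the stated maximum over $u\in\mathcal N(J(\Gamma_{\mathbb C}))$. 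Second, show that each such $u$ is realized by a real arc, which gives the reverse inequality.

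For the complex computation I will use the Rees valuative criterion quoted above, $\theta_{\mathbb C}(\Gamma)=\max_v v(J(\Gamma_{\mathbb C}))/v(\Gamma_{\mathbb C})$ over the Rees valuations $v$ of $J(\Gamma_{\mathbb C})$. The isolated critical point hypothesis makes $J(\Gamma_{\mathbb C})$ $\mathfrak m$-primary, so its Newton polyhedron is convenient. I will then invoke Bivià-Ausina's result \cite{Bivia2005}: for an $\mathfrak m$-primary Newton nondegenerate ideal, the integral closure equals the monomial ideal of its Newton polyhedron. The Rees valuations are then exactly the monomial valuations $\nu_u$ with $u$ running over the primitive inward normals of the compact facets.

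For each such $u$ we have $\nu_u(J(\Gamma_{\mathbb C}))=\nu_u(J(\Gamma))$, since the Newton polyhedron is the same over $\mathbb R$ and over $\mathbb C$. Also $\nu_u(\Gamma_{\mathbb C})=\nu_u(\Gamma)$ is the weighted order of the real series. This gives
\[
\theta_{\mathbb C}(\Gamma)=\max_{u\in\mathcal N}\frac{\nu_u(J(\Gamma))}{\nu_u(\Gamma)}.
\]
This step is where I expect the main obstacle. One must check that Newton nondegeneracy in the sense of \cite{Bivia2005} really identifies the Rees valuations with these facet normals, and is not merely an upper bound. The natural tool is the equality $\overline{J}=\overline{\mathrm{in}(J)}$, the integral closure of the monomial ideal generated by the Newton polyhedron. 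If that equality were only an inclusion, I would instead bound $\theta_{\mathbb C}$ from above by the Newton formula of \cite{Bivia2003} and let the real arcs below supply the matching lower bound.

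For the real realization, fix a maximizing $u$, which is a positive integral weight. The $u$-initial forms of $\Gamma$ and of the chosen partial derivatives $\partial\Gamma/\partial x_j$ are finitely many nonzero real polynomials. I will pick $c\in(\mathbb R^*)^r$ outside their zero sets; this is possible because a nonzero polynomial cannot vanish on an open subset of $\mathbb R^r$. Then the monomial arc $\gamma(t)=(c_1t^{u_1},\ldots,c_rt^{u_r})$ satisfies $\operatorname{ord}_t(\Gamma\circ\gamma)=\nu_u(\Gamma)$. At least one generator of $J(\Gamma)$ attains the minimal weighted order without cancellation, so $\nu_\gamma(J(\Gamma))=\nu_u(J(\Gamma))$. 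The arc supremum of Theorem~\ref{thm:Simon-pair-formula} is therefore at least the Newton maximum, so $\theta_{\mathrm{LS}}(E,0)\ge\theta_{\mathbb C}(\Gamma)$. Combined with the upper bound, this gives equality throughout, and the same arc proves the final realization statement.
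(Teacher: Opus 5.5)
Your proposal is correct and follows the paper's proof essentially step for step. The upper bound comes from restricting the complex inequality. Bivi\`a-Ausina's nondegeneracy criterion gives the complex formula: $\overline{J(\Gamma_{\mathbb C})}$ equals the integral closure of its Newton monomial model, whose Rees valuations are the compact-facet monomial valuations. A generic real monomial arc $t\mapsto(c_1t^{u_1},\ldots,c_rt^{u_r})$ then gives both the matching lower bound and the realization statement; the equality you flagged as the main obstacle is exactly what that criterion provides for $\mathfrak m$-primary ideals, so the fallback via \cite{Bivia2003} is unnecessary.
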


\begin{proof}
The isolated-critical-point hypothesis makes $J(\Gamma_{\mathbb C})$ $\mathfrak m$-primary. By Bivi\`a-Ausina's Newton-nondegeneracy criterion~\cite[Theorem~3.4]{Bivia2005}, its integral closure equals that of its Newton monomial model, whose Rees valuations are the monomial valuations associated to the primitive inward normals of the compact facets~\cite{HunekeSwanson2006}. Complexification does not change the supports of $\Gamma$ or its partial derivatives, so the corresponding weighted orders agree over $\mathbb R$ and $\mathbb C$. For a fixed positive integral weight $u$, choose $c\in(\mathbb R^*)^r$ outside the zero sets of the finitely many $u$-initial forms of $\Gamma$ and a set of generators of $J(\Gamma)$. Then the real arc $t\mapsto(c_1t^{u_1},\ldots,c_rt^{u_r})$ has exactly the prescribed weighted orders. Hence the real arc supremum in Theorem~\ref{thm:Simon-pair-formula} is at least every facet ratio, while restriction from the complex inequality gives the opposite inequality.
\end{proof}

For computations one may read the two weighted orders directly from the support. If $\Gamma(x)=\sum_\alpha c_\alpha x^\alpha$ and $u\in\mathbb Z_{>0}^r$, then
$$
\nu_u(\Gamma)=\min_{c_\alpha\ne0}\langle u,\alpha\rangle,
\qquad
\nu_u(J(\Gamma))=
\min_i\min_{\substack{c_\alpha\ne0\\ \alpha_i>0}}
(\langle u,\alpha\rangle-u_i),
$$
with the inner minimum interpreted as $+\infty$ when the corresponding partial derivative vanishes identically. These are elementary support identities; no separate Newton theorem is involved.

\begin{example}
\label{ex:x3-y100}
For $\Gamma(x,y)=x^3+y^{100}$, one has $m=3$, so Theorem~\ref{thm:order_bound} gives only $\theta_R(\Gamma,0)\ge2/3$. But $J(\Gamma)=(3x^2,100y^{99})$ is monomial. Its compact Newton edge has primitive inward normal $u=(99,2)$, for which $\nu_u(J(\Gamma))=198$ and $\nu_u(\Gamma)=200$. Hence Corollary~\ref{cor:Newton-Simon} gives $\theta_R(\Gamma,0)=99/100$. The same ratio is already visible on the real $y$-axis. Thus higher-order terms may create much slower directions when the leading homogeneous obstruction has a non-isolated critical set.
\end{example}

The preceding example shows in particular why the order of the reduced germ alone need not determine the optimal exponent: singular directions of its leading term may expose much higher-order behavior. To show that the valuative framework can nevertheless lead to closed formulas even for highly singular germs, we now turn to a determinantal family of sum-of-squares singularities. Besides providing a substantial class of explicit Simon-pair computations, this family will supply the algebraic model for the higher-dimensional $SU(2)$ Yang--Mills application.

\subsection{Determinantal Simon pairs}\label{subsec:determinantal}

The preceding valuative description admits a closed-form calculation for a broad family of determinantal sum-of-squares singularities; we use standard determinantal notation as in~\cite{BrunsVetter1988}. Let $\operatorname{Mat}_{m,n}(\mathbb R)$ be the space of real $m\times n$ matrices and put $r=\min\{m,n\}$. For $0\le j\le r$, let $I_j$ denote the ideal generated by the $j\times j$ minors of the generic matrix, with $I_0=(1)$, and define
$$
Q_j(X):=\sum_{|A|=|B|=j}\det(X_{A,B})^2,
\qquad Q_0=1,
$$
where the sum ranges over all $j$-element row and column subsets. We set $Q_j=0$ for $j>r$.

\begin{lemma}\label{lem:det-gradient}
For $1\le k\le r$ one has the polynomial identity
$$
\|\nabla Q_k(X)\|_F^2
=4\sum_{\ell=0}^{k-1}(2\ell+1)Q_{k-1-\ell}(X)Q_{k+\ell}(X),
$$
where $\|\cdot\|_F$ is the Frobenius norm.
\end{lemma}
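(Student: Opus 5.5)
The plan is to exploit orthogonal invariance and reduce the identity to a symmetric-function identity in the squared singular values.

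\emph{Spectral form of $Q_j$ and of its gradient.} By Cauchy--Binet, $Q_j(X)=\sum_{|A|=j}\det\bigl((XX^{T})_{A,A}\bigr)=e_j(XX^{T})$, the $j$-th elementary symmetric function of the eigenvalues $\lambda_1,\ldots,\lambda_m\ge0$ of $XX^{T}$. For $j>r$ this vanishes, since $\operatorname{rank}XX^{T}\le r$, which is consistent with the convention $Q_j=0$. Write $e_j(A)$ as a coefficient of $\det(I+tA)$ and differentiate. This gives $d\,e_k(A)[H]=\operatorname{tr}\bigl(C_{k-1}(A)H\bigr)$ with
$$
C_{k-1}(A)=\sum_{j\ge0}(-1)^j e_{k-1-j}(A)A^j,
$$
a polynomial in $A$ that commutes with $A$. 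With $A=XX^{T}$ and $H=dX\,X^{T}+X\,dX^{T}$ we obtain $\nabla Q_k(X)=2\,C_{k-1}(XX^{T})X$. Hence
$$
\|\nabla Q_k\|_F^2=4\operatorname{tr}\bigl(C_{k-1}(A)^2A\bigr).
$$
On the eigenbasis of $A$, the eigenvalue of $C_{k-1}(A)$ at $\lambda_i$ is $\partial e_k/\partial\lambda_i=e_{k-1}(\hat\lambda_i)$, where $\hat\lambda_i$ denotes the list with $\lambda_i$ removed. Therefore
$$
\|\nabla Q_k\|_F^2=4\sum_i\lambda_i\,e_{k-1}(\hat\lambda_i)^2 .
$$
Alternatively, one may use the $O(m)\times O(n)$ invariance of both sides to reduce to a diagonal $X$ and compute directly; this is a useful cross-check.

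\emph{The symmetric identity.} It remains to prove, as polynomials in the $\lambda$'s,
$$
\sum_i\lambda_i\,e_{k-1}(\hat\lambda_i)^2=\sum_{\ell=0}^{k-1}(2\ell+1)\,e_{k-1-\ell}\,e_{k+\ell}.
$$
I will compare coefficients of each squarefree-type monomial $\lambda_D^2\lambda_U$, where $D,U$ are disjoint with $|D|=d$, $|U|=u$, and $2d+u=2k-1$. Then $u=2p+1$ with $p=k-1-d$.

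On the left, a term $\lambda_i\lambda_A\lambda_B$ with $|A|=|B|=k-1$ and $i\notin A\cup B$ contributes exactly when $i\in U$, $A\cap B=D$, and $A\triangle B=U\setminus\{i\}$, split evenly. The coefficient is therefore $(2p+1)\binom{2p}{p}$.

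On the right, $e_{k-1-\ell}e_{k+\ell}$ contributes exactly when $D$ lies in both factors and $U$ is split into pieces of sizes $p-\ell$ and $p+\ell+1$. The coefficient is therefore $\sum_{\ell=0}^{p}(2\ell+1)\binom{2p+1}{p-\ell}$. Terms with $\ell>p$ do not contribute. Since $d\ge0$ forces $p\le k-1$, the range of $\ell$ matches.

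\emph{The binomial step.} The main obstacle is this coefficient comparison, which I expect to close by telescoping. Put $j=p-\ell$. Then
$$
\binom{2p}{j}-\binom{2p}{j-1}=\binom{2p+1}{j}\frac{2p+1-2j}{2p+1},
$$
so $(2\ell+1)\binom{2p+1}{p-\ell}=(2p+1)\bigl[\binom{2p}{p-\ell}-\binom{2p}{p-\ell-1}\bigr]$. Summing over $\ell=0,\ldots,p$ telescopes to $(2p+1)\binom{2p}{p}$. As sanity checks, $p=0$ gives $1=1$, and $p=1$ gives $3+3=6=3\cdot2$. Combining the three steps yields the lemma.
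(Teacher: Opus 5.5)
Your proof is correct. It arrives at the same symmetric-function identity as the paper, $\sum_i a_i\,e_{k-1}(\hat a_i)^2=\sum_{\ell}(2\ell+1)e_{k-1-\ell}e_{k+\ell}$, but reaches it and proves it differently in both halves.

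\emph{The reduction.} The paper uses $O(m)\times O(n)$-invariance and density to restrict to diagonal matrices with distinct positive singular values. It then argues from the stabilizing sign changes that the gradient is diagonal there. Your adjugate expansion instead gives the closed formula $\nabla Q_k(X)=2\,C_{k-1}(XX^{T})X$ at every $X$. This needs no density or symmetry argument and yields an explicit gradient as a by-product. Working with all $m$ eigenvalues of $XX^{T}$ rather than the $r$ squared singular values is harmless, since the extra zeros contribute nothing.

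\emph{The identity.} The paper reads off the coefficient of $x^{k-1}y^{k-1}$ in the divided difference $\bigl(xE'(x)E(y)-yE'(y)E(x)\bigr)/(x-y)$, which produces the weights $2\ell+1$ as $p-q$ in one stroke. Your monomial-by-monomial comparison instead reduces everything to $\sum_{\ell=0}^{p}(2\ell+1)\binom{2p+1}{p-\ell}=(2p+1)\binom{2p}{p}$, and your telescoping proves this correctly. The generating-function argument is shorter and more structural. Yours is fully elementary and shows exactly which index configurations $(D,U)$ contribute, at the cost of more bookkeeping.
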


\begin{proof}
Both sides are invariant under left and right orthogonal transformations and are polynomial in the entries of $X$, so it is enough to verify the identity on the dense set of matrices with distinct positive singular values. Write these singular values as $\sigma_1,\ldots,\sigma_r$ and put $a_i=\sigma_i^2$. By Cauchy--Binet, $Q_j(X)=e_j(a_1,\ldots,a_r)$, where $e_j$ is the $j$-th elementary symmetric polynomial. At a diagonal singular-value matrix, the stabilizing sign changes in the left and right orthogonal groups force all off-diagonal entries of the gradient to vanish (and likewise the entries in any rectangular zero block). Its $i$-th diagonal entry is
$\partial Q_k/\partial\sigma_i=2\sigma_i e_{k-1}(a_1,\ldots,\widehat a_i,\ldots,a_r)$. Hence
$$
\frac14\|\nabla Q_k\|_F^2
=\sum_{i=1}^r a_i e_{k-1}(\widehat a_i)^2.
$$
It remains to prove the symmetric-function identity
$$
\sum_i a_i e_{k-1}(\widehat a_i)^2
=\sum_{\ell=0}^{k-1}(2\ell+1)e_{k-1-\ell}e_{k+\ell}.
$$
Let $E(z)=\prod_i(1+a_i z)=\sum_{j\ge0}e_jz^j$. The left-hand side is the coefficient of $x^{k-1}y^{k-1}$ in
$$
E(x)E(y)\sum_i\frac{a_i}{(1+a_ix)(1+a_iy)}
=\frac{xE'(x)E(y)-yE'(y)E(x)}{x-y}.
$$
Pairing the terms with bidegrees $(p,q)$ and $(q,p)$, $p>q$, gives
$$
(p-q)e_pe_q\frac{x^py^q-x^qy^p}{x-y}.
$$
A diagonal monomial $x^{k-1}y^{k-1}$ occurs in this expression precisely when $p+q=2k-1$, and then occurs once, with coefficient $p-q$. Writing $p=k+\ell$ and $q=k-1-\ell$ gives exactly the displayed sum. This proves the identity.
\end{proof}

\begin{theorem}\label{thm:determinantal-Simon}
Fix $1\le k\le r$. In the real analytic local ring at any matrix $X_0$ one has
$$
\overline{(Q_k)}^{\,\mathbb R}=\overline{I_k^2}^{\,\mathbb R},
\qquad
\overline{J(Q_k)}^{\,\mathbb R}=\overline{I_{k-1}I_k}^{\,\mathbb R}.
$$
If $\operatorname{rank}X_0=s<k$, then the optimal \L{}ojasiewicz gradient exponent of $Q_k$ at $X_0$ is
$$
1-\frac{1}{2(k-s)}.
$$
In particular, at the origin the exponent is $(2k-1)/(2k)$, whereas on the rank-$(k-1)$ stratum it is $1/2$.
\end{theorem}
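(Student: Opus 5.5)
The plan is to work with the real integral closure through its inequality form: $h\in\overline I^{\,\mathbb R}$ iff $|h|\le C\max_j|g_j|$ near $X_0$, as recorded before Theorem~\ref{thm:Simon-pair-formula}. Both claimed equalities of closures then become two-sided comparisons of functions up to constants.

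For the first equality, $Q_k$ is literally the sum of squares of the generators of $I_k$. So $Q_k\asymp\max_{A,B}\det(X_{A,B})^2$, and this maximum is comparable to the max of absolute values of the products of pairs of minors, which generate $I_k^2$. The only point to check is that $|\det X_{A,B}\det X_{A',B'}|\le\max\det^2$, which is AM--GM; this gives $\overline{(Q_k)}^{\,\mathbb R}=\overline{I_k^2}^{\,\mathbb R}$.

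For the second equality, we use Lemma~\ref{lem:det-gradient}. All terms $Q_{k-1-\ell}Q_{k+\ell}$ are nonnegative, so
$$\|\nabla Q_k\|_F^2\asymp\sum_{\ell=0}^{k-1}Q_{k-1-\ell}Q_{k+\ell}.$$
The $\ell=0$ term $Q_{k-1}Q_k$ is comparable to $\max|I_{k-1}I_k\text{-generators}|^2$ by the same AM--GM argument. It then suffices to show that the other terms are dominated locally by $Q_{k-1}Q_k$. In singular values $a_i=\sigma_i^2$ these are products $e_{k-1-\ell}e_{k+\ell}$, and we need the Newton--Maclaurin-type inequality $e_{k-1-\ell}e_{k+\ell}\le C\,e_{k-1}e_k$ for nonnegative $a_i$. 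This follows from log-concavity of elementary symmetric polynomials of nonnegative numbers, $e_j^2\ge c\,e_{j-1}e_{j+1}$: chaining these gives $e_{p}e_{q}\le C e_{p+1}e_{q-1}$ for $p<q-1$, and hence reduces every term to the $\ell=0$ term. So $\|\nabla Q_k\|\asymp\sqrt{Q_{k-1}Q_k}$, which is comparable to $\max|$generators of $I_{k-1}I_k|$. Since $J(Q_k)$ is generated by the entries of $\nabla Q_k$, this gives the second equality globally, hence at every $X_0$.

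For the exponent at $X_0$ of rank $s<k$, we seek the least $\theta$ with $Q_k^{\theta}\lesssim\sqrt{Q_{k-1}Q_k}$, i.e.\ $Q_k^{2\theta-1}\lesssim Q_{k-1}$ near $X_0$. Near $X_0$, $s$ singular values stay bounded below, say $a_1,\dots,a_s\ge c>0$, and the remaining $a_{s+1},\dots,a_r$ are small. Then
$$e_j(a)\asymp e_{j-s}(a_{s+1},\dots,a_r)\quad\text{for } j\ge s,$$
since the bounded-away values contribute unit factors and the dominant terms take all large $a_i$. The condition becomes $e_{k-s}(b)^{2\theta-1}\lesssim e_{k-s-1}(b)$ for small nonnegative $b$. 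With $d=k-s$, Maclaurin gives $e_{d}^{1/d}\lesssim e_{d-1}^{1/(d-1)}$, i.e.\ $e_{d-1}\gtrsim e_d^{(d-1)/d}$, so $2\theta-1=(d-1)/d$ works, giving $\theta=1-1/(2d)$. Sharpness comes from the arc $b=(t,\dots,t,0,\dots)$ with $d$ entries equal to $t$, which realizes $X(t)=X_0+\mathrm{diag}$ in singular value coordinates. Along it $e_d=t^d$ and $e_{d-1}\asymp t^{d-1}$, so no smaller exponent is possible; this is also consistent with Theorem~\ref{thm:Simon-pair-formula} through the arc supremum. The cases $s=0$ and $s=k-1$ recover the particular values.

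The main obstacle is making the localization $e_j(a)\asymp e_{j-s}(b)$ rigorous uniformly near $X_0$. This also requires care for rectangular matrices, where some $a_i$ are absent, and for the $\lesssim$ constants in the log-concavity chain when some $e_j$ vanish. I would handle the latter by proving the inequalities directly as polynomial inequalities via Newton's inequalities, which remain valid with zeros.
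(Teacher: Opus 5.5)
Your proposal is correct, and it takes a genuinely different route from the paper. The paper never compares functions pointwise. It works arc by arc: Smith normal form over $\mathbb R\{t\}$ reduces every order to the invariant-factor orders $\alpha_1\le\cdots\le\alpha_\rho$, which gives $\nu_\gamma(Q_j)=2A_j$. In that language, the dominance of the $\ell=0$ term in Lemma~\ref{lem:det-gradient} becomes the monotonicity $\alpha_{k+j}\ge\alpha_{k-j}$. The exponent bound becomes ``the largest of the $h=k-s$ nonzero orders is at least their average.'' The real curve criterion then turns these arc statements into the closure identities and the exponent.

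You instead prove the uniform pointwise comparison $\|\nabla Q_k\|_F^2\asymp Q_{k-1}Q_k$ on all of matrix space from Newton's log-concavity of $e_j(\sigma^2)$. You then get the exponent from Maclaurin's inequality after localizing the singular values. Log-concavity and Maclaurin are the archimedean counterparts of the two valuative facts the paper uses along arcs.

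What each route buys:
\begin{itemize}
\item Your version gives global constants depending only on the matrix size and $k$. It uses only the inequality form of real integral closure, and it shows directly that the optimal exponent is admissible.
\item The paper's version needs no continuity of singular values. It also identifies the extremal arcs in exactly the form used in the arc supremum of Theorem~\ref{thm:Simon-pair-formula}.
\end{itemize}

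The localization you flag as the main obstacle is routine. The lower bound $e_j(a)\ge e_s(a')\,e_{j-s}(b)$ holds because every term is nonnegative. The upper bound follows from $(j+1)e_{j+1}(b)\le(r-s-j)\,\max_i b_i\,e_j(b)$ for bounded $b$. For the sharpness arc, note that diagonal entries linear in $t$ give $b_i=t^2$ rather than $t$; the ratio of orders is unchanged.
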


\begin{proof}
Let $\gamma:(\mathbb R,0)\to(\operatorname{Mat}_{m,n}(\mathbb R),X_0)$ be a real analytic arc. If $I_k\circ\gamma\equiv0$, then $Q_k\circ\gamma\equiv0$ and, because $Q_k$ is a sum of squares of the $k$-minors, $J(Q_k)\circ\gamma\equiv0$ as well. Thus assume that $I_k\circ\gamma\not\equiv0$.

Over the discrete valuation ring $\mathbb R\{t\}$, Smith normal form gives invertible analytic row and column operations; such operations do not change the orders of the determinantal ideals. The nonzero invariant factors have orders
$0\le\alpha_1\le\cdots\le\alpha_\rho$, where $\rho$ is the generic rank of the arc. Put $A_j=\alpha_1+\cdots+\alpha_j$ for $j\le\rho$ and $A_j=+\infty$ for $j>\rho$. The determinantal ideals satisfy $\nu_\gamma(I_j)=A_j$. Since $Q_j$ is a sum of squares of the $j$-minors, there is no cancellation over a real arc and therefore $\nu_\gamma(Q_j)=2A_j$.

Applying Lemma~\ref{lem:det-gradient} and again using positivity of the sum of squares and of all coefficients gives
$$
\nu_\gamma(J(Q_k))
=\min_{0\le\ell\le k-1}\bigl(A_{k-1-\ell}+A_{k+\ell}\bigr),
$$
where a term is $+\infty$ when $k+\ell>\rho$. For a finite term,
$$
A_{k-1-\ell}+A_{k+\ell}-(A_{k-1}+A_k)
=\sum_{j=1}^{\ell}(\alpha_{k+j}-\alpha_{k-j})\ge0,
$$
so the minimum occurs at $\ell=0$. Hence
$$
\nu_\gamma(J(Q_k))=A_{k-1}+A_k
=\nu_\gamma(I_{k-1}I_k).
$$
Together with $\nu_\gamma(Q_k)=2A_k=\nu_\gamma(I_k^2)$, the real arc criterion gives the two integral-closure identities.

Now suppose $\operatorname{rank}X_0=s<k$ and let $h=k-s$. For every arc not contained in $V(I_k)$, exactly the first $s$ invariant-factor orders are zero, so
$$
\frac{\nu_\gamma(J(Q_k))}{\nu_\gamma(Q_k)}
=\frac12+\frac{A_{k-1}}{2A_k}.
$$
Since $A_k=\alpha_{s+1}+\cdots+\alpha_k$ and $\alpha_k$ is at least the average of these $h$ terms, one has
$A_{k-1}=A_k-\alpha_k\le(h-1)A_k/h$. Thus every arc ratio is at most $1-1/(2h)$. Equality is realized by an arc which, after orthogonal row and column changes putting $X_0$ in singular-value normal form, keeps its $s$ nonzero singular values fixed and turns on $h$ additional diagonal entries linearly in $t$. The real arc characterization of the finite-dimensional gradient exponent used in the proof of Theorem~\ref{thm:Simon-pair-formula}, applied after translating $X_0$ to the origin, yields the stated exponent.
\end{proof}

Theorem~\ref{thm:determinantal-Simon} shows in particular that the nominal homogeneous bound is sharp for $Q_k$ at the vertex even though $Q_k$ has a large non-isolated critical set. The exponent then decreases in discrete steps along the rank stratification.

The determinantal calculation illustrates the algebraic side of the theory: the optimal exponent can be read from the way the Simon pair interacts with the rank stratification. Before turning to the PDE applications, we briefly recall the dynamical meaning of the optimal exponent. This also exposes a distinction that is invisible in the algebraic formula itself: divisorial attainment of the optimal ratio does not automatically imply realization of that divisor by an actual gradient trajectory.

\subsection{Dynamical outlook}

The usual consequences of a \L{}ojasiewicz--Simon inequality for gradient-like evolutions remain available in the asymmetric setting. For an equation $\dot u=-\mathcal A(u)M(u)$ with the standard coercivity and boundedness assumptions on $\mathcal A(u):Y\to X$, a trajectory converging to $0$ decays exponentially when $\theta_{\mathrm{LS}}=1/2$; if $\theta=\theta_{\mathrm{LS}}(E,0)>1/2$, then
$$
E(u(t))-E(0)\le C(1+t)^{-1/(2\theta-1)},
\qquad
\|u(t)\|_X\le C(1+t)^{-(1-\theta)/(2\theta-1)}.
$$
See Simon~\cite{Simon1983}, Chill~\cite{Chill2003}, and Chill--Haraux--Jendoubi~\cite{ChillHarauxJendoubi2009}. These are standard upper bounds; the Simon-pair formula raises the separate question of whether an extremal divisor is dynamically realized.

\begin{problem}
\label{prob:valuative-realization}
Let $v$ be a divisorial valuation computing $\theta_{\mathrm{LS}}(E,0)=v(J(\Gamma))/v(\Gamma)$. Find geometric or dynamical conditions guaranteeing a gradient-like trajectory whose asymptotic orders realize $v$. More generally, determine which extremal divisors of the intrinsic Simon pair are dynamically realizable.
\end{problem}

This is stronger than the existence of a slowly converging trajectory. In Example~\ref{ex:x3-y100}, however, the extremal direction is realized: the Euclidean negative gradient flow preserves the $y$-axis, where $y'=-100y^{99}$ and hence $y(t)\asymp t^{-1/98}$, exactly the decay rate predicted by $\theta_R=99/100$.

With the abstract reduction and its algebraic invariants now in place, we turn to genuinely infinite-dimensional applications. The goal is not merely to recover a \L{}ojasiewicz--Simon inequality, but to identify the reduced singularity sharply enough to determine its optimal exponent. Yang--Mills theory provides a particularly clean first test: on flat tori the kernel consists of explicit harmonic modes, and for $SU(2)$ the resulting reduced energy is precisely a determinantal singularity of the type computed in Theorem \ref{thm:determinantal-Simon}.

\section{Application to the Yang--Mills Functional}\label{sec:YM}

\subsection{Analytic setup and compatible splitting}

Let $G$ be a compact Lie group with Lie algebra $\mathfrak g$, equipped with an $\operatorname{Ad}G$-invariant inner product. Let $\mathbb T^2=\mathbb R^2/\mathbb Z^2$ carry the flat metric, let $P=\mathbb T^2\times G$ be the trivial $G$-bundle, and let $\Theta$ denote the trivial flat connection.

\medskip
We parameterize connections near $\Theta$ by $A=\Theta+a$, where $X=W^{1,p}(\mathbb T^2;\Lambda^1\otimes\mathfrak g)$ and $p>2$.
Since $p>2$ and $\dim\mathbb T^2=2$, Sobolev embedding gives $W^{1,p}\hookrightarrow C^0\hookrightarrow L^\infty$. In particular, the quadratic term $[a\wedge a]$ belongs to $L^p\subset L^2$, and the Yang--Mills energy is well-defined on $X$.

Let $Y:=\bigl(W^{1,p'}(\mathbb T^2;\Lambda^1\otimes\mathfrak g)\bigr)^*$, where $p'$ is the conjugate exponent to $p$; we denote this space by $W^{-1,p}$ in the present convention. Since $p'<2<p$, the inclusion $W^{1,p}\hookrightarrow W^{1,p'}$ is continuous. Hence distributional duality gives a continuous pairing $\langle\cdot,\cdot\rangle_{Y,X}:Y\times X\to\mathbb R$ which separates points of $Y$.

The Yang--Mills energy is
$$
\mathcal E(A)=\frac12\|F_A\|_{L^2}^2,
\qquad
F_{\Theta+a}=da+\frac12[a\wedge a].
$$
To remove the infinitesimal gauge degeneracy at $\Theta$, define the gauge-fixed functional
$$
\mathcal E_{\mathrm{gf}}(a)=\frac12\|F_{\Theta+a}\|_{L^2}^2+\frac12\|d^*a\|_{L^2}^2.
$$
For $b\in X$, integration by parts gives $d\mathcal E_{\mathrm{gf}}(a)[b]=\langle d_{\Theta+a}^*F_{\Theta+a}+dd^*a,b\rangle_{Y,X}$. Accordingly, set $\mathcal M(a)=d_{\Theta+a}^*F_{\Theta+a}+dd^*a$. Thus $d\mathcal E_{\mathrm{gf}}(a)[b]=\langle\mathcal M(a),b\rangle_{Y,X}$.

These analyticity properties are standard in the Yang--Mills \L{}ojasiewicz--Simon theory; see, for example, Feehan--Maridakis~\cite[Proposition~3.1]{FeehanMaridakisMemoirs}. We verify them here in the Sobolev scale used below.

\begin{lemma}\label{lem:YM-analytic}
Let $p>2$. The gauge-fixed functional $\mathcal E_{\mathrm{gf}}:X\to\mathbb R$ and the gauge-fixed map $\mathcal M:X\to Y$ are real analytic. In fact, both are continuous polynomial maps.
\end{lemma}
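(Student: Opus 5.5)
The plan is to show that both maps are built from finitely many bounded multilinear operations, which makes them continuous polynomials and hence real analytic. First I expand $F_{\Theta+a}=da+\tfrac12[a\wedge a]$. The linear part $a\mapsto da$ is bounded $W^{1,p}\to L^p$. Because $p>2$ in dimension two, $W^{1,p}$ is a Banach algebra embedded in $C^0$, so the bracket is a bounded bilinear map $W^{1,p}\times W^{1,p}\to W^{1,p}\subset L^p$. On the compact torus $L^p\hookrightarrow L^2$. This gives $a\mapsto F_{\Theta+a}$ as a continuous polynomial of degree two from $X$ to $L^2$.

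Next I write
$$\mathcal E_{\mathrm{gf}}(a)=\tfrac12\langle F_{\Theta+a},F_{\Theta+a}\rangle_{L^2}+\tfrac12\|d^*a\|_{L^2}^2,$$
with $d^*:W^{1,p}\to L^p\subset L^2$ bounded. The energy is then the composition of a polynomial map with the bounded bilinear $L^2$ inner product. That makes it a continuous polynomial of degree four, and a continuous polynomial between Banach spaces is real analytic, since its Taylor series terminates.

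For $\mathcal M$ I expand
$$d_{\Theta+a}^*F_{\Theta+a}+dd^*a=d^*da+dd^*a+d^*\bigl(\tfrac12[a\wedge a]\bigr)+(\text{bracket terms } \star[a,da]\text{ and }\star[a,[a\wedge a]]).$$
Each term must define an element of $Y=(W^{1,p'})^*$ through the pairing $\langle\cdot,b\rangle$, and the continuity estimate has to be verified term by term. The linear terms are routine: $\langle d^*da,b\rangle=\int\langle da,db\rangle$, and this is bounded by $\|a\|_{W^{1,p}}\|b\|_{W^{1,p'}}$ by Hölder's inequality. The quadratic term $d^*[a\wedge a]$ is handled the same way, since $[a\wedge a]\in L^p$.

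The cubic terms are of the form $\int\langle [a\wedge\cdot],\cdot\rangle$, where $a\in L^\infty$ multiplies either $F\in L^p$ or $da\in L^p$, paired against $b\in W^{1,p'}\subset L^{p'}$. Hölder's inequality again gives multilinear bounds. So $\mathcal M$ is a continuous polynomial of degree three from $X$ to $Y$, and hence real analytic.

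I also plan to verify directly that $d\mathcal E_{\mathrm{gf}}(a)[b]=\langle\mathcal M(a),b\rangle$ by differentiating the polynomial. The derivative of $F_{\Theta+a}$ in direction $b$ is $d_{\Theta+a}b$, so the claim reduces to integrating by parts against $F$.

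The main obstacle is making precise that every term of $\mathcal M(a)$, which involves derivatives of products, defines a bounded functional on $W^{1,p'}$ with multilinear estimates. I would handle this in weak form, moving one derivative onto $b$, so that only $L^\infty\cdot L^p\cdot L^{p'}$ Hölder estimates are needed.
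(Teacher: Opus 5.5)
Your proposal is correct and follows essentially the paper's argument: the curvature is a degree-two continuous polynomial into $L^p$ via $W^{1,p}\hookrightarrow L^\infty$, the energy is an $L^2$ pairing of polynomial expressions (degree four), and each term of $\mathcal M$ is bounded into $W^{-1,p}$ by moving one derivative onto the test function and applying H\"older. The only slip is a labeling one: the term $\star[a,da]$ is quadratic rather than cubic in $a$, which does not affect the conclusion that $\mathcal M$ is a continuous polynomial of degree three.
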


\begin{proof}
The curvature $F_{\Theta+a}=da+\frac12[a\wedge a]$ is a continuous polynomial map from $X$ into $L^p$. Indeed, $W^{1,p}\hookrightarrow L^\infty$, so $[a\wedge a]\in L^p$. The operator $d^*$ maps $L^p$ continuously to $W^{-1,p}$, while the zero-order bracket contraction involving $a$ and an $L^p$ two-form is a continuous bilinear map $X\times L^p\to L^p\hookrightarrow W^{-1,p}$. Hence $a\mapsto d_{\Theta+a}^*F_{\Theta+a}$ is a continuous polynomial map into $Y$. The term $dd^*a$ is linear from $X$ to $Y$, so $\mathcal M$ is a continuous polynomial. The functional $\mathcal E_{\mathrm{gf}}$ is obtained by integrating $L^2$ inner products of polynomial expressions. Since $L^p\hookrightarrow L^2$ on the compact torus, these multilinear forms are continuous. Thus $\mathcal E_{\mathrm{gf}}$ is also a continuous polynomial, of degree at most four.
\end{proof}

The Hodge--Fredholm assertions below are standard elliptic theory; compare the Sobolev Fredholm framework in Feehan--Maridakis~\cite{FeehanMaridakisMemoirs} and Feehan~\cite{Feehan2022}.

\begin{lemma}\label{lem:YM-Hodge}
The following statements hold.
\begin{enumerate}
\item The linearization of $\mathcal M$ at $0$ is the Hodge Laplacian
$$D\mathcal M(0)=\Delta_H:=d^*d+dd^*:X\to Y.$$
\item Its kernel is
$$
K=\ker\Delta_H
=\{\xi\,dx+\eta\,dy:\xi,\eta\in\mathfrak g\}
\cong\mathfrak g\oplus\mathfrak g,
$$
so $\dim K=2\dim\mathfrak g$.
\item There is a topological direct sum $Y=\operatorname{Ran}\Delta_H\oplus K$. Consequently $\Delta_H:X\to Y$ is Fredholm of index zero.
\item The operator $\Delta_H$ is symmetric with respect to the distributional pairing.
\end{enumerate}
\end{lemma}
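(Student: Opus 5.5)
The plan is to prove the four items in order, working throughout on the flat torus. There all the covariant derivatives at $\Theta$ reduce to ordinary ones, and we can use Fourier series componentwise in an orthonormal basis of $\mathfrak g$ and of the coframe $dx,dy$.

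For item 1, we expand $\mathcal M(b)=d^*_{\Theta+b}F_{\Theta+b}+dd^*b$. The curvature is $F_{\Theta+b}=db+\tfrac12[b\wedge b]$, and the adjoint is $d^*_{\Theta+b}=d^*+(\text{zero-order bracket with } b)$. Every term other than $d^*db+dd^*b$ is therefore at least quadratic in $b$. By Lemma~\ref{lem:YM-analytic} these terms are continuous polynomial maps $X\to Y$, so their derivative at $0$ vanishes. This gives $D\mathcal M(0)=\Delta_H$.

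For item 4, symmetry follows from the general remark in Section~\ref{sec:framework}: $D^2\mathcal E_{\mathrm{gf}}(0)[a,b]=\langle\Delta_Ha,b\rangle_{Y,X}$ and the Hessian of a $C^2$ functional is symmetric. Directly, this is integration by parts, $\langle\Delta_Ha,b\rangle=(da,db)_{L^2}+(d^*a,d^*b)_{L^2}$, first for smooth forms and then by density in $W^{1,p}$, using continuity of both sides.

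For item 2, we use the fact that on flat $\mathbb T^2$ the Hodge Laplacian on $1$-forms acts coefficientwise as $-(\partial_x^2+\partial_y^2)$. If $\Delta_Ha=0$ in the distributional sense, then each Fourier coefficient $\hat a(k)$ satisfies $4\pi^2|k|^2\hat a(k)=0$, so $\hat a(k)=0$ for $k\ne0$. Hence $a=\xi\,dx+\eta\,dy$ with constant $\xi,\eta\in\mathfrak g$, and the dimension count $\dim K=2\dim\mathfrak g$ is immediate.

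Item 3 is the main obstacle, because we need surjectivity onto a closed complement in the $W^{1,p}\to W^{-1,p}$ scale rather than in $L^2$. We first identify $K\subset Y$ with the constant forms acting by integration; these are exactly the functionals determined by the zero Fourier mode. Define $Y_0=\{y\in Y:\langle y,k\rangle=0 \text{ for all } k\in K\}$, where constant forms are viewed inside $W^{1,p'}$. Then $Y=Y_0\oplus K$ topologically: the projection onto $K$ is $y\mapsto\sum_j\langle y,e_j\rangle e_j/|\mathbb T^2|$ for an orthonormal basis $e_j$ of constant forms, and this map is bounded. It remains to show $\operatorname{Ran}\Delta_H=Y_0$. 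The inclusion $\subseteq$ is item 4 combined with $\Delta_HK=0$. For $\supseteq$, we invoke $L^p$ elliptic regularity on the torus. The operator $\Delta_H$ is invertible between mean-zero subspaces $W^{1,p}_0\to W^{-1,p}_0$; this is Calder\'on--Zygmund theory, or equivalently the Marcinkiewicz multiplier theorem applied to the Riesz-type symbols $k_ik_j/|k|^2$. Writing $y=\partial_x f+\partial_y g+c$ with $f,g\in L^p$, the solution $(-\Delta)^{-1}y$ on nonzero modes is $\partial(-\Delta)^{-1}f$, which lies in $W^{1,p}$ by $L^p$-boundedness of $\nabla^2(-\Delta)^{-1}$. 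Consequently $\Delta_H$ maps a closed complement $Z$ (the mean-zero forms) isomorphically onto $Y_0$. It follows that $\Delta_H$ is Fredholm, with $\dim\ker=\dim K=\operatorname{codim}\operatorname{Ran}$, so it has index zero. The same decomposition shows $Y_K=K=Z^\circ$ and that $\rho$ is an isomorphism, which is what the framework of Section~\ref{sec:framework} needs.
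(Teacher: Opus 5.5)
Your proposal is correct and follows the same route as the paper. You obtain the linearization by discarding the higher-order polynomial terms, identify the kernel with the constant forms, split $Y$ by the harmonic projection whose kernel is the annihilator of $K$, deduce $\operatorname{Ran}\Delta_H=K^\circ$ from $W^{1,p}\to W^{-1,p}$ elliptic solvability, and get symmetry by integration by parts. The only difference is that you make explicit, via Fourier series and Calder\'on--Zygmund (or Marcinkiewicz) multiplier bounds on the torus, the elliptic regularity and solvability that the paper cites as standard. One small point: when you write $y=\partial_xf+\partial_yg+c$, a general element of $(W^{1,p'})^*$ also has a zero-order $L^p$ term, which you can absorb by solving a scalar Poisson equation.
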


\begin{proof}
Since $F_{\Theta+a}=da+\frac12[a\wedge a]$, the linear part of $d_{\Theta+a}^*F_{\Theta+a}$ at $a=0$ is $d^*da$, and the gauge-fixing term is linear. Hence $D\mathcal M(0)=d^*d+dd^*=\Delta_H$.

If $u\in X$ and $\Delta_Hu=0$ in $Y$, then $u$ is distributionally harmonic. Elliptic regularity makes $u$ smooth, and therefore $u$ is a harmonic one-form. On the flat torus with trivial coefficient bundle, harmonic one-forms are constant, giving the displayed description of $K$.

Regarding smooth harmonic forms as elements of $Y$ by the $L^2$ pairing, let $P_H:Y\to K$ be harmonic projection. Standard elliptic solvability in the scale $W^{1,p}\to W^{-1,p}$ gives $\ker P_H=\operatorname{Ran}\Delta_H$ and therefore
$$Y=\operatorname{Ran}\Delta_H\oplus K.$$
The range is closed and has codimension $\dim K$, so $\Delta_H$ is Fredholm of index zero. Finally, distributional integration by parts gives
$$
\langle\Delta_Hu,v\rangle_{Y,X}
=\int_{\mathbb T^2}\bigl(\langle du,dv\rangle+\langle d^*u,d^*v\rangle\bigr)
=\langle\Delta_Hv,u\rangle_{Y,X}.
$$
\end{proof}

Choose an $L^2$-orthonormal basis $\{e_j\}_{j=1}^r$ of $K$, where $r=2\dim\mathfrak g$. Define
$$
P_Xu=\sum_{j=1}^r\langle u,e_j\rangle_{L^2}e_j,
\qquad
P_YT=\sum_{j=1}^r\langle T,e_j\rangle_{Y,X}e_j,
$$
and set $Z=\ker P_X$. Exactly as in the $\mathrm{SU}(2)$ case, $P_X$ and $P_Y$ are bounded, $\ker P_Y=\operatorname{Ran}\Delta_H$, and
$$
X=K\oplus Z,
\qquad
Y=\operatorname{Ran}\Delta_H\oplus K.
$$
Moreover $Z^\circ=K$: every $k\in K$ annihilates $Z$ by $L^2$ orthogonality; conversely, if $T\in Z^\circ$, then for $u=P_Xu+z$ one has
$$
\langle T,u\rangle_{Y,X}
=\sum_{j=1}^r\langle u,e_j\rangle_{L^2}\langle T,e_j\rangle_{Y,X},
$$
so $T$ agrees on the dense subspace $X\subset W^{1,p'}$ with the element
$\sum_j\langle T,e_j\rangle e_j\in K$ and hence equals it. Finally, the restriction map $K\to K^*$ is the finite-dimensional Riesz isomorphism. Thus all compatibility hypotheses of Section~\ref{sec:framework} hold.

\subsection{Exact Lyapunov--Schmidt reduction}

For $G=\mathrm{SU}(2)$, Feehan~\cite[Example~A.2 and Theorem~A.4]{FeehanFlat2019} describes the corresponding harmonic Kuranishi model and the singular local flat-connection space. The calculation below is valid for every compact $G$ and gives the exact reduced energy in the present gauge-fixed formulation.

\begin{lemma}\label{lem:reduction}
The Lyapunov--Schmidt correction vanishes identically. Under the identification $K\cong\mathfrak g\oplus\mathfrak g$,
$$
\Gamma_{\mathrm{gf}}(\xi,\eta)=\kappa\|[\xi,\eta]\|^2,
\qquad
\kappa=\frac12\operatorname{Vol}(\mathbb T^2)>0.
$$
\end{lemma}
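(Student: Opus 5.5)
The plan is to show that $\psi\equiv0$ solves the reduction equation $RM(\xi+\psi(\xi))=0$, and then to invoke the uniqueness in the analytic implicit function theorem. Fix a constant harmonic form $k=\xi\,dx+\eta\,dy\in K$ and evaluate $\mathcal M(k)$ directly. Since $k$ has constant coefficients, $dk=0$ and $d^*k=0$. Hence $F_{\Theta+k}=\tfrac12[k\wedge k]=[\xi,\eta]\,dx\wedge dy$, a constant $\mathfrak g$-valued two-form, and the gauge-fixing term $dd^*k$ vanishes. Next compute
$$
d^*_{\Theta+k}F=d^*F+(\text{zero-order bracket of }k\text{ with }F).
$$
The first term vanishes because $F$ is constant. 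With the flat-metric sign conventions, the second term is the constant one-form
$$
\pm\bigl([\eta,[\xi,\eta]]\,dx-[\xi,[\xi,\eta]]\,dy\bigr).
$$
So $\mathcal M(k)$ is a constant, hence harmonic, one-form and lies in $K\subset Y$. By Lemma~\ref{lem:YM-Hodge}, $K$ is exactly the complement $Y_K=\ker R$, so $RM(k)=0$. The uniqueness clause of the implicit function theorem in the construction of $\psi$ then forces $\psi(\xi,\eta)=0$ for all small $(\xi,\eta)$. This proves that the correction vanishes identically and that $c(\xi,\eta)=\xi\,dx+\eta\,dy$.

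Once $c$ is known to be linear, the reduced energy follows by substitution into the gauge-fixed functional:
$$
\Gamma_{\mathrm{gf}}(\xi,\eta)=\mathcal E_{\mathrm{gf}}(k)=\tfrac12\int_{\mathbb T^2}\bigl|[\xi,\eta]\,dx\wedge dy\bigr|^2+0=\tfrac12\operatorname{Vol}(\mathbb T^2)\,\|[\xi,\eta]\|^2 .
$$
Here we use that $dx\wedge dy$ has unit pointwise norm for the flat metric, and that the norm on $\mathfrak g$ comes from the chosen $\operatorname{Ad}$-invariant inner product. This gives $\kappa=\tfrac12\operatorname{Vol}(\mathbb T^2)$.

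The only delicate point is the identification of $\ker R$ with $K$ sitting inside $Y$. One must check that a constant one-form, viewed in $Y=(W^{1,p'})^*$ through the $L^2$ pairing, is an element of the chosen complement. It must not have a component in $\operatorname{Ran}\Delta_H$. This holds because $P_Y$ fixes $K$ pointwise, since the $e_j$ are $L^2$-orthonormal, and because $\ker P_Y=\operatorname{Ran}\Delta_H$. A constant form therefore equals its own harmonic projection, and $R$, which is the projection along $K$, annihilates it.

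We do not need to track the sign of the bracket term in $d^*_A$. Its exact form is irrelevant: all we use is that it is a constant one-form. For the same reason, no smallness or further expansion is needed, because $\mathcal M$ is polynomial (Lemma~\ref{lem:YM-analytic}) and the computation above is exact.
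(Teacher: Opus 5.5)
Your proposal is correct and follows essentially the same route as the paper: show that $\mathcal M(k)$ is a constant one-form and hence lies in $K=\ker R$, conclude $\psi\equiv0$ from uniqueness in the implicit function theorem, and evaluate $\mathcal E_{\mathrm{gf}}$ on $K$ using $d^*k=0$. Your explicit formula for the bracket term and your check that $P_Y$ fixes $K$ pointwise spell out steps that the paper states more briefly.
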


\begin{proof}
Let $k=\xi\,dx+\eta\,dy\in K$. Since $k$ is constant, $dk=0$ and $d^*k=0$. With the standard graded bracket convention,
$$
[k\wedge k]=2[\xi,\eta]dx\wedge dy,
\qquad
F_{\Theta+k}=[\xi,\eta]dx\wedge dy.
$$
This curvature is constant, so $d^*F_{\Theta+k}=0$. Hence $\mathcal M(k)=d_{\Theta+k}^*F_{\Theta+k}$ is obtained by algebraic contraction of constant forms and is itself a constant $\mathfrak g$-valued one-form. Thus $\mathcal M(k)\in K$.

Let $R=I-P_Y$ be the projection onto $\operatorname{Ran}\Delta_H$ along $K$. Then $R\mathcal M(k)=0$ for every sufficiently small $k\in K$. The transverse Lyapunov--Schmidt equation therefore has $z=0$ as a solution for every $k$, and uniqueness in the implicit function theorem gives $\psi(k)=0$. Since $d^*k=0$,
$$
\Gamma_{\mathrm{gf}}(k)
=\mathcal E(\Theta+k)-\mathcal E(\Theta)
=\frac12\operatorname{Vol}(\mathbb T^2)\|[\xi,\eta]\|^2.  \qquad \qedhere
$$
\end{proof}

\subsection{The optimal exponent for a compact structure group}

The quartic reduced energy is naturally a norm-square of a moment map. This permits the $\mathrm{SU}(2)$ computation to be extended without using the special three-dimensional Gram identity.

\begin{lemma}\label{lem:YM-moment-map}
Let $V=\mathfrak g\oplus\mathfrak g$ with its product inner product and symplectic form
$$
\omega((\xi_1,\eta_1),(\xi_2,\eta_2))
=\langle\xi_1,\eta_2\rangle-\langle\eta_1,\xi_2\rangle.
$$
For the diagonal adjoint action of $G$ on $V$, the map
$$
\mu(\xi,\eta)=[\xi,\eta]\in\mathfrak g\cong\mathfrak g^*
$$
is a moment map, up to the harmless sign convention for moment maps. Consequently, if $q(\xi,\eta)=\|[\xi,\eta]\|^2$, then there are $C,\varepsilon>0$ such that
$$
q(\xi,\eta)^{3/4}\le C\|Dq(\xi,\eta)\|
$$
whenever $q(\xi,\eta)<\varepsilon$.
\end{lemma}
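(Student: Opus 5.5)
The plan has two parts. First, I verify the moment-map identity directly. Second, I invoke Fisher's gradient inequality for norm-squares of moment maps of linear actions \cite[Theorem~4.7]{Fisher2014}, as announced in the introduction.

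For the first part, the diagonal adjoint action of $G$ on $V=\mathfrak g\oplus\mathfrak g$ is linear and preserves both the product inner product and $\omega$, because $\operatorname{Ad}$ is orthogonal for the invariant inner product. The infinitesimal action of $\zeta\in\mathfrak g$ is $\zeta_V(\xi,\eta)=([\zeta,\xi],[\zeta,\eta])$. Define $\mu^\zeta(\xi,\eta)=\langle\mu(\xi,\eta),\zeta\rangle=\langle[\xi,\eta],\zeta\rangle$. Differentiating in the direction $(\xi_2,\eta_2)$ gives
$$
d\mu^\zeta(\xi,\eta)[(\xi_2,\eta_2)]=\langle[\xi_2,\eta],\zeta\rangle+\langle[\xi,\eta_2],\zeta\rangle .
$$
Using $\operatorname{ad}$-invariance, this equals $\langle\xi_2,[\eta,\zeta]\rangle+\langle[\zeta,\xi],\eta_2\rangle$. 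Compare this with
$$
\omega(\zeta_V(\xi,\eta),(\xi_2,\eta_2))=\langle[\zeta,\xi],\eta_2\rangle-\langle[\zeta,\eta],\xi_2\rangle .
$$
Since $-[\zeta,\eta]=[\eta,\zeta]$, the two expressions agree, so $d\mu^\zeta=\iota_{\zeta_V}\omega$. This is the moment-map condition, possibly up to the global sign convention. Equivariance holds because $\operatorname{Ad}_g[\xi,\eta]=[\operatorname{Ad}_g\xi,\operatorname{Ad}_g\eta]$, and $\mu$ is homogeneous quadratic with $\mu(0)=0$, which is the normalization used for linear actions. I identify $\mathfrak g\cong\mathfrak g^*$ through the invariant inner product.

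For the second part, I check that the setting matches Fisher's hypotheses: a compact group acting linearly and unitarily on a finite-dimensional symplectic vector space, with the homogeneous quadratic moment map. The inner product together with $\omega$ should form a compatible (Kähler) structure. Concretely, the complex structure $J(\xi,\eta)=(-\eta,\xi)$ satisfies $\omega(u,v)=\langle Ju,v\rangle$ up to sign, so $V\cong\mathfrak g\otimes\mathbb C$ with $G$ acting unitarily. Fisher's theorem then gives $\|\mu\|^{3/2}\le C\|\nabla\|\mu\|^2\|$ near $0$, which is the claim with $q=\|\mu\|^2$. Both sides are homogeneous: $q$ has degree $4$, so $q^{3/4}$ has degree $3$, and $\|Dq\|$ has degree $3$. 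By homogeneity the local inequality on the sublevel set $\{q<\varepsilon\}$ follows from the inequality on the unit sphere, for any $\varepsilon>0$.

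The main obstacle is making sure the cited theorem applies verbatim: that its conventions use the same inner product to measure the gradient and identify $\mathfrak g^*$, and that it gives the exponent $3/4$ for the quartic $\|\mu\|^2$ rather than a statement about $\|\mu\|$ alone. If a self-contained argument is preferred, I would use $Dq=2\,d\mu^*\mu$ and the identity $\langle Dq(x),J\mu(x)_V(x)\rangle$-type pairing: evaluating $Dq$ on the vector $J\mu(x)_V$ gives $2\|\mu(x)_V\|^2$. This reduces the problem to bounding $\|\mu\|^{3/2}\lesssim\|\mu_V(x)\|^2/\|\mu(x)_V\|$, that is, to showing $\|\mu(x)_V(x)\|\gtrsim\|\mu(x)\|^{3/2}$ near $\{\mu=0\}$. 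That estimate is the genuinely nontrivial input, obtained by Kempf–Ness or Ness-type arguments, so I would rely on Fisher's theorem for it.
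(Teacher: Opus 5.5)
Your proposal is correct and follows essentially the same route as the paper. Both verify $d\mu^\zeta=\iota_{\zeta_V}\omega$ using $\operatorname{ad}$-invariance of the inner product, use $J(\xi,\eta)=(-\eta,\xi)$ to make $V\cong\mathfrak g\otimes\mathbb C$ a unitary representation, and then apply Fisher's Theorem~4.7 to $q=\|\mu\|^2$. Your extra homogeneity remark is also correct and is not spelled out in the paper: the ratio $q^{3/4}/\|Dq\|$ is scale-invariant, so the local inequality in fact holds on all of $V$, and hence for any $\varepsilon$.
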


\begin{proof}
For $\zeta\in\mathfrak g$, the infinitesimal diagonal action is
$\zeta_V(\xi,\eta)=([\zeta,\xi],[\zeta,\eta])$. By invariance of the inner product, for a tangent vector $(u,v)$,
\begin{align*}
\omega(\zeta_V(\xi,\eta),(u,v))
&=\langle[\zeta,\xi],v\rangle-\langle[\zeta,\eta],u\rangle\\
&=\langle\zeta,[\xi,v]+[u,\eta]\rangle\\
&=D\langle[\xi,\eta],\zeta\rangle[(u,v)].
\end{align*}
Thus $\mu(\xi,\eta)=[\xi,\eta]$ is a moment map for this linear compact-group action. The $G$-invariant complex structure $J(\xi,\eta)=(-\eta,\xi)$ makes $V$ a unitary linear representation. Fisher~\cite[Theorem~4.7]{Fisher2014} proves that the norm-square of a moment map associated to a linear action of a compact group satisfies a gradient inequality with exponent $3/4$. Applying that theorem to $q=\|\mu\|^2$ proves the claim.
\end{proof}

\begin{theorem}\label{thm:YM-gauge-fixed}
Let $G$ be a compact Lie group with Lie algebra $\mathfrak g$, let $\Theta$ be the trivial flat connection on $\mathbb T^2$, and let $p>2$.
\begin{enumerate}
\item If $\mathfrak g$ is abelian, then
$$
\theta_{\mathrm{LS}}(\mathcal E_{\mathrm{gf}},0)=\frac12.
$$
\item If $\mathfrak g$ is nonabelian, then there are $C,\varepsilon>0$ such that
$$
|\mathcal E_{\mathrm{gf}}(a)-\mathcal E_{\mathrm{gf}}(0)|^{3/4}
\le C\|\mathcal M(a)\|_{W^{-1,p}}
$$
for $\|a\|_{W^{1,p}}<\varepsilon$, and
$$
\theta_{\mathrm{LS}}(\mathcal E_{\mathrm{gf}},0)=\frac34.
$$
\end{enumerate}
\end{theorem}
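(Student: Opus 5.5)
The proof reduces everything to the finite-dimensional germ. By Lemma~\ref{lem:YM-Hodge} and the discussion after it, the compatibility hypotheses of Section~\ref{sec:framework} hold. Lemma~\ref{lem:YM-analytic} supplies real analyticity. So Theorem~\ref{thm:exact_reduction} gives $\theta_{\mathrm{LS}}(\mathcal E_{\mathrm{gf}},0)=\theta_R(\Gamma_{\mathrm{gf}},0)$, and the two sets of admissible exponents coincide. By Lemma~\ref{lem:reduction}, the reduced energy is exactly $\Gamma_{\mathrm{gf}}(\xi,\eta)=\kappa\|[\xi,\eta]\|^2$ on $K\cong\mathfrak g\oplus\mathfrak g$, with $\kappa>0$. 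It therefore remains to compute the gradient exponent of $\kappa q$, where $q=\|[\xi,\eta]\|^2$. Rescaling by $\kappa$ changes only constants, not admissibility.

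\emph{Abelian case.} If $\mathfrak g$ is abelian, then $[\xi,\eta]\equiv0$, so $\Gamma_{\mathrm{gf}}\equiv0$. Corollary~\ref{cor:order_bound}(i) then gives $\theta_{\mathrm{LS}}=1/2$.

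\emph{Nonabelian case, upper bound.} Lemma~\ref{lem:YM-moment-map} gives $q^{3/4}\le C\|Dq\|$ wherever $q<\varepsilon$. This holds in particular on a neighborhood of $0$, since $q(0)=0$ and $q$ is continuous. Hence $3/4$ is admissible for $\Gamma_{\mathrm{gf}}$. Step~1 of the proof of Theorem~\ref{thm:exact_reduction} transfers this to the full functional. It gives the stated inequality $|\mathcal E_{\mathrm{gf}}(a)-\mathcal E_{\mathrm{gf}}(0)|^{3/4}\le C\|\mathcal M(a)\|_{W^{-1,p}}$ for $\|a\|_{W^{1,p}}$ small, using $\mathcal E_{\mathrm{gf}}(0)=0$.

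\emph{Nonabelian case, lower bound.} Since $\mathfrak g$ is nonabelian, there exist $\xi,\eta$ with $[\xi,\eta]\neq0$. So $\Gamma_{\mathrm{gf}}\not\equiv0$, and it is homogeneous of degree four, which gives $m=\operatorname{ord}_0\Gamma_{\mathrm{gf}}=4$. Theorem~\ref{thm:order_bound} then gives $\theta_{\mathrm{LS}}\ge 3/4$. Together with the upper bound, this yields $\theta_{\mathrm{LS}}(\mathcal E_{\mathrm{gf}},0)=3/4$.

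The only substantive step is the upper bound, and it rests entirely on Fisher's theorem as packaged in Lemma~\ref{lem:YM-moment-map}. The one point I would check carefully is that the inequality there is local near the origin and not merely on a sublevel set. Since $q$ is continuous with $q(0)=0$, a small ball around $0$ lies inside $\{q<\varepsilon\}$, so this is immediate. Everything else is bookkeeping with the already established reduction results.
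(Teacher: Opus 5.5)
Your proposal is correct and follows the paper's proof essentially step for step. It uses exact reduction via Theorem~\ref{thm:exact_reduction} and Lemma~\ref{lem:reduction}, the quartic order bound for $\theta\ge 3/4$, Fisher's inequality (Lemma~\ref{lem:YM-moment-map}) for admissibility of $3/4$ together with its transfer to the full functional, and the vanishing of $\Gamma_{\mathrm{gf}}$ in the abelian case. Citing Corollary~\ref{cor:order_bound}(i) for the abelian case is just a slightly tidier packaging of the paper's appeal to Proposition~\ref{prop:morse_bott_reduction} and Theorem~\ref{thm:exact_reduction}.
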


\begin{proof}
By Lemmas~\ref{lem:YM-analytic} and~\ref{lem:YM-Hodge}, Theorem~\ref{thm:exact_reduction} applies, and Lemma~\ref{lem:reduction} gives $\Gamma_{\mathrm{gf}}(\xi,\eta)=\kappa\|[\xi,\eta]\|^2$.
If $\mathfrak g$ is abelian, this reduced germ is identically zero. Proposition~\ref{prop:morse_bott_reduction} and Theorem~\ref{thm:exact_reduction} then give the optimal exponent $1/2$.

Suppose $\mathfrak g$ is nonabelian. Then $\Gamma_{\mathrm{gf}}$ is a nonzero homogeneous polynomial of degree four, so Theorem~\ref{thm:order_bound} gives
$$
\theta_R(\Gamma_{\mathrm{gf}},0)\ge\frac34.
$$
On the other hand, Lemma~\ref{lem:YM-moment-map} gives the reduced gradient inequality with exponent $3/4$, so
$\theta_R(\Gamma_{\mathrm{gf}},0)\le3/4$. Therefore
$\theta_R(\Gamma_{\mathrm{gf}},0)=3/4$, and exact reduction gives the same value for $\mathcal E_{\mathrm{gf}}$. Since Theorem~\ref{thm:exact_reduction} identifies the sets of admissible exponents, the displayed full Banach inequality follows as well.
\end{proof}

The same argument gives the ungauged estimate on the linear Coulomb slice.

\begin{corollary}\label{cor:YM-coulomb}
Assume that $\mathfrak g$ is nonabelian and let
$$
\mathcal S=\{a\in W^{1,p}(\mathbb T^2;\Lambda^1\otimes\mathfrak g):d^*a=0\}.
$$
Then there are $C,\varepsilon>0$ such that
$$
|\mathcal E(\Theta+a)-\mathcal E(\Theta)|^{3/4}
\le C\|d_{\Theta+a}^*F_{\Theta+a}\|_{W^{-1,p}}
$$
for $a\in\mathcal S$ with $\|a\|_{W^{1,p}}<\varepsilon$. The exponent $3/4$ is optimal.
\end{corollary}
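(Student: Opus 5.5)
The plan is to deduce Corollary~\ref{cor:YM-coulomb} from Theorem~\ref{thm:YM-gauge-fixed}(2) by restricting it to the Coulomb slice. For $a\in\mathcal S$ we have $d^*a=0$, so the gauge-fixing term $\frac12\|d^*a\|_{L^2}^2$ vanishes. Hence
$$
\mathcal E_{\mathrm{gf}}(a)=\mathcal E(\Theta+a),
\qquad
\mathcal E_{\mathrm{gf}}(0)=\mathcal E(\Theta)=0.
$$
Similarly, $dd^*a=0$, so
$$
\mathcal M(a)=d_{\Theta+a}^*F_{\Theta+a}.
$$
Substituting these identities into the Banach inequality of Theorem~\ref{thm:YM-gauge-fixed}(2), for $\|a\|_{W^{1,p}}<\varepsilon$, gives the displayed estimate directly with the same constants $C,\varepsilon$.

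For optimality, I would exhibit elements of $\mathcal S$ along which no exponent $\theta<3/4$ can hold. Constant one-forms $k=\xi\,dx+\eta\,dy\in K$ satisfy $d^*k=0$, so $K\subset\mathcal S$, and by Lemma~\ref{lem:reduction} we have $\psi\equiv0$ and $c(k)=k$. Since $\mathfrak g$ is nonabelian, choose $\xi,\eta$ with $[\xi,\eta]\neq0$ and take the arc $a(t)=t(\xi\,dx+\eta\,dy)$. Then
$$
\mathcal E(\Theta+a(t))=\kappa t^4\|[\xi,\eta]\|^2 .
$$
Moreover, $\mathcal M(a(t))\in K$ is a constant one-form that is cubic in $t$. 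By the norm equivalence of Lemma~\ref{lem:gradient_identity}, $\|\mathcal M(a(t))\|_{W^{-1,p}}$ is comparable to $\|D\Gamma_{\mathrm{gf}}(t\xi,t\eta)\|=O(|t|^3)$.

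Suppose an inequality with exponent $\theta$ held on the slice. Along this arc it would give $|t|^{4\theta}\lesssim|t|^3$, which forces $\theta\ge3/4$, exactly as in the proof of Theorem~\ref{thm:order_bound} with $m=4$. Alternatively, one can note that any admissible $\theta$ on $\mathcal S$ restricts to $K\subset\mathcal S$, where it is admissible for $\Gamma_{\mathrm{gf}}$ in the sense of Step~2 of Theorem~\ref{thm:exact_reduction}; Theorem~\ref{thm:YM-gauge-fixed} then gives $\theta\ge3/4$.

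The step that needs the most care is the fact that the optimality argument uses only test points inside $\mathcal S$. This is guaranteed because the Lyapunov--Schmidt correction vanishes, so the reduction manifold $c(K)=K$ lies in the Coulomb slice.
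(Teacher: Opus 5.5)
Your proposal is correct and follows the paper's proof. You restrict Theorem~\ref{thm:YM-gauge-fixed}(2) to the Coulomb slice, where $\mathcal E_{\mathrm{gf}}(a)=\mathcal E(\Theta+a)$ and $\mathcal M(a)=d^*_{\Theta+a}F_{\Theta+a}$, and you get optimality from the arc $t(\xi\,dx+\eta\,dy)\in K\subset\mathcal S$ with $[\xi,\eta]\ne0$, along which the energy is of exact order $t^4$ and the gradient is $O(|t|^3)$. The only cosmetic difference is that you use just the upper bound on $\|\mathcal M(a(t))\|_{W^{-1,p}}$, which is all that is needed, whereas the paper records the two-sided comparison $\asymp|t|^3$.
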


\begin{proof}
On $\mathcal S$, one has $\mathcal E_{\mathrm{gf}}(a)=\mathcal E(\Theta+a)$ and
$\mathcal M(a)=d_{\Theta+a}^*F_{\Theta+a}$, so the inequality follows from Theorem~\ref{thm:YM-gauge-fixed}. For optimality, choose $k_0=\xi\,dx+\eta\,dy\in K$ with $[\xi,\eta]\ne0$. Then $tk_0\in\mathcal S$ and the exact reduced formula gives
$$
\Gamma_{\mathrm{gf}}(tk_0)=t^4\Gamma_{\mathrm{gf}}(k_0),
\qquad \Gamma_{\mathrm{gf}}(k_0)>0.
$$
Since $\Gamma_{\mathrm{gf}}$ is homogeneous of degree four, Euler's identity gives $D\Gamma_{\mathrm{gf}}(k_0)\ne0$, and therefore
$$
\|D\Gamma_{\mathrm{gf}}(tk_0)\|=|t|^3\|D\Gamma_{\mathrm{gf}}(k_0)\|.
$$
The gradient identity and the finite-dimensional isomorphism $\rho$ then give $\|\mathcal M(tk_0)\|_{W^{-1,p}}\asymp |t|^3$. Hence no exponent smaller than $3/4$ can be admissible.
\end{proof}

If $\mathfrak g$ is nonabelian, the reduced critical set is the commuting-pair variety
$$
\operatorname{Crit}\Gamma_{\mathrm{gf}}
=\{(\xi,\eta):[\xi,\eta]=0\}.
$$
Indeed, the inclusion from right to left is immediate. Conversely, since $\Gamma_{\mathrm{gf}}$ is homogeneous of degree four, Euler's identity implies that $D\Gamma_{\mathrm{gf}}(\xi,\eta)=0$ forces $4\Gamma_{\mathrm{gf}}(\xi,\eta)=0$, hence $[\xi,\eta]=0$. At the origin this critical set contains both $\mathfrak g\times\{0\}$ and $\{0\}\times\mathfrak g$ but is a proper subset of $\mathfrak g\oplus\mathfrak g$. It therefore cannot be a smooth submanifold at the origin: any tangent space there would contain both coordinate subspaces and hence all of $K$, which would make a smooth critical submanifold locally open. Thus the product connection is not Morse--Bott whenever $\mathfrak g$ is nonabelian. In the $\mathrm{SU}(2)$ case this agrees with Feehan~\cite[Corollary~A.10]{FeehanFlat2019}.

Theorem \ref{thm:YM-gauge-fixed} determines the optimal exponent at the product connection on $T^2$ for every compact structure group. For $SU(2)$, however, the special three-dimensional structure of the Lie algebra allows one to go further: in every torus dimension the constant-mode reduced energy becomes a determinantal sum-of-squares singularity. We now exploit this identification not only at the product connection, but also along the nearby stratum of nonzero constant flat connections.

\subsection{SU(2) over higher-dimensional flat tori}

The exact constant-mode reduction persists in every dimension for the trivial $\mathrm{SU}(2)$ bundle over a flat torus, while Theorem~\ref{thm:determinantal-Simon} computes the resulting singularity.

\begin{theorem}\label{thm:YM-higher-torus}
Let $d\ge2$, let $\mathbb T^d=\mathbb R^d/\mathbb Z^d$ carry its standard flat metric, and let $P=\mathbb T^d\times\mathrm{SU}(2)$ be the trivial bundle with product connection $\Theta$. Choose $p>d$, put
$X_d=W^{1,p}(\mathbb T^d;\Lambda^1\otimes\mathfrak{su}(2))$ and
$Y_d=(W^{1,p'}(\mathbb T^d;\Lambda^1\otimes\mathfrak{su}(2)))^*$, and denote the gauge-fixed functional and its gradient map by $\mathcal E_{\mathrm{gf},d}$ and $\mathcal M_d$. Then the Lyapunov--Schmidt correction with respect to the harmonic splitting vanishes identically and, writing a harmonic one-form as $k=\sum_{i=1}^d\xi_i\,dx_i$,
$$
\Gamma_d(\xi_1,\ldots,\xi_d)
=\kappa_d\sum_{1\le i<j\le d}\|[\xi_i,\xi_j]\|^2
$$
for a constant $\kappa_d>0$. Consequently,
$$
\theta_{\mathrm{LS}}(\mathcal E_{\mathrm{gf},d},\Theta)=\frac34.
$$
Moreover, every sufficiently small nonzero constant flat connection $\Theta+k_0$ has optimal exponent $1/2$ for $\mathcal E_{\mathrm{gf},d}$.
\end{theorem}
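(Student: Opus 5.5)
The plan is to repeat the $\mathbb T^2$ argument in dimension $d$ and then identify the reduced energy with a determinantal germ. First, the analytic setup carries over verbatim. Since $p>d$, Sobolev embedding gives $W^{1,p}\hookrightarrow L^\infty$ on $\mathbb T^d$. The proofs of Lemmas~\ref{lem:YM-analytic} and~\ref{lem:YM-Hodge} then go through unchanged: $\mathcal E_{\mathrm{gf},d}$ and $\mathcal M_d$ are continuous polynomials, $D\mathcal M_d(0)=\Delta_H$, and its kernel is $K=\{\sum_i\xi_i\,dx_i:\xi_i\in\mathfrak{su}(2)\}$, the constant forms. The harmonic $L^2$ projections give compatible splittings exactly as before. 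For the exact reduction, take $k=\sum\xi_i dx_i$ constant. Then $F_{\Theta+k}=\sum_{i<j}[\xi_i,\xi_j]dx_i\wedge dx_j$ is constant, so $d^*F=0$, and $\mathcal M_d(k)$ is the algebraic contraction of constant forms. Hence it lies in $K$, so $R\mathcal M_d(k)=0$ and uniqueness gives $\psi\equiv0$. Using $d^*k=0$, we get $\Gamma_d=\tfrac12\operatorname{Vol}\sum_{i<j}\|[\xi_i,\xi_j]\|^2$, which is the displayed formula.

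Next I identify $\Gamma_d$ with $Q_2$. Under the isomorphism $(\mathfrak{su}(2),[\cdot,\cdot])\cong(\mathbb R^3,c\times)$, which is an isometry up to scale, we have $\|[\xi_i,\xi_j]\|^2=c'\|\xi_i\times\xi_j\|^2$. By Lagrange's identity, $\|\xi_i\times\xi_j\|^2$ is the sum of the squares of the $2\times2$ minors of the $3\times2$ matrix with columns $\xi_i,\xi_j$. Summing over pairs $i<j$ gives $\Gamma_d=\kappa\,Q_2(X)$, where $X\in\operatorname{Mat}_{3,d}(\mathbb R)$ has columns $\xi_1,\ldots,\xi_d$ and $r=\min(3,d)\ge2$. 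By Theorem~\ref{thm:exact_reduction}, $\theta_{\mathrm{LS}}(\mathcal E_{\mathrm{gf},d},\Theta)=\theta_R(Q_2,0)$. Theorem~\ref{thm:determinantal-Simon} with $k=2$ and $s=0$ gives $1-1/4=3/4$.

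For a small nonzero constant flat connection $\Theta+k_0$, the natural route is to re-center at $k_0$. The point $k_0$ is a critical point of $\mathcal E_{\mathrm{gf},d}$ because $\mathcal M_d(k_0)\in K$ and $\mathcal M_d(k_0)=\rho^{-1}D\Gamma_d(k_0)=0$, as flatness means the $\xi_i$ commute. The cleanest approach is to avoid a new Fredholm analysis at $k_0$ and instead use the admissible-set identity of Theorem~\ref{thm:exact_reduction} as a local statement on a fixed neighbourhood of $0$. Lemma~\ref{lem:comparison} gives uniform constants on that neighbourhood, and Step~1 of the proof of Theorem~\ref{thm:exact_reduction} is pointwise. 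So if $\Gamma_d$ satisfies $\|D\Gamma_d(\xi)\|\ge C|\Gamma_d(\xi)-\Gamma_d(\xi_0)|^{1/2}$ near $\xi_0=k_0$, then the same argument, with $x=c(\xi)+z=\xi+z$ near $k_0$ and the energy centered at $E(k_0)=\Gamma_d(k_0)=0$, yields $|E(x)|^{1/2}\le C\|\mathcal M_d(x)\|$ near $k_0$. Conversely, $\theta\ge1/2$ always holds, by restriction to $c(\xi)$ and the lower bound on any exponent in the definition. Flatness forces the $\xi_i$ to commute; in $\mathfrak{su}(2)$ that means they are parallel, so $X_0$ has rank exactly $1=k-1$. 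Theorem~\ref{thm:determinantal-Simon} with $s=1$ then gives exponent $1-1/2=1/2$ for $Q_2$ at $X_0$, hence $1/2$ for $\mathcal E_{\mathrm{gf},d}$ at $\Theta+k_0$.

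The main obstacle is this last localization. One must check that the estimates of Lemma~\ref{lem:comparison} and Lemma~\ref{lem:gradient_identity} hold at base points $c(\xi_0)\neq0$ inside the fixed neighbourhood, with the energy offset zero. They do, because $\psi\equiv0$ and every bound there is uniform on the neighbourhood. One must also make sure that ``sufficiently small'' places $k_0$ inside that neighbourhood. An alternative is to check directly that the Hessian at $k_0$ is Fredholm with kernel equal to the tangent space of the smooth flat stratum, and to invoke Proposition~\ref{prop:morse_bott_reduction} and Corollary~\ref{cor:order_bound}. I would use that only as a fallback.
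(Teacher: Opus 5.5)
Your proposal is correct and follows the paper's proof essentially step for step: the same exact constant-mode reduction with $\psi\equiv0$, the same identification $\Gamma_d=\kappa_d' Q_2$ via Lagrange/Cauchy--Binet, and the determinantal theorem applied at rank $0$ and at rank $1$. The only difference is minor. At $k_0$ the paper re-runs the implicit function theorem with the same splitting and re-derives the comparison estimates in both directions. You instead reuse the uniform constants of Lemma~\ref{lem:comparison} on a fixed neighbourhood of $0$, and you only need admissibility of $1/2$, since $1/2$ is the floor of the definition.
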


\begin{proof} The analyticity and Hodge–Fredholm arguments are the same as in the two-dimensional case. The harmonic space is
$$K_d=H^1(\mathbb T^d;\mathfrak{su}(2))\cong\mathfrak{su}(2)^d,$$
and consists precisely of the constant one-forms. Fix the harmonic splitting
$X_d=K_d\oplus Z$
and let $R:Y_d\to\operatorname{Ran}\Delta_H$ be the corresponding transverse projection, so that $\ker R=K_d=Z^\circ$.
Let
$$k=\sum_{i=1}^d\xi_i\,dx_i\in K_d.$$

Then $dk=d^*k=0$ and $F_{\Theta+k}=\sum_{i<j}[\xi_i,\xi_j]\,dx_i\wedge dx_j$.
All coefficients are constant. Hence the gauge-fixed gradient $\mathcal M_d(k)$ is itself a constant $\mathfrak{su}(2)$-valued one-form, and therefore belongs to $K_d$. 
It follows that $R\mathcal M_d(k)=0$
for every sufficiently small $k\in K_d$. Since $z=0$ solves the transverse Lyapunov–Schmidt equation and its small solution is unique, the correction vanishes identically:
$\psi(k)=0$. Thus the reduced energy is exactly the restriction of the gauge-fixed energy to $K_d$:
$$ \Gamma_d(\xi_1,\ldots,\xi_d)  =\kappa_d\sum_{1\le i<j\le d}\|[\xi_i,\xi_j]\|^2 $$
for some $\kappa_d>0$.

\smallskip
Choose an orthonormal identification $\mathfrak{su}(2)\cong\mathbb R^3$. Up to a fixed positive factor, the Lie bracket becomes the cross product. If $X$ is the $3\times d$ matrix with columns $\xi_1,\ldots,\xi_d$, the Cauchy–Binet identity gives
$$\sum_{i<j}\|\xi_i\times\xi_j\|^2=Q_2(X),$$
where $Q_2$ is the sum of the squares of the $2\times2$ minors of $X$. Hence
$\Gamma_d=\kappa_d'Q_2$
for some $\kappa_d'>0$. The determinantal Łojasiewicz theorem, applied with $k=2$ at the rank-zero matrix, yields
$\theta_{\mathrm{LS}}(\Gamma_d,0)=1-\frac{1}{2(2-0)}=\frac34$.
Because the Lyapunov–Schmidt reduction is exact, the full and reduced germs have the same optimal exponent. Therefore
$\theta_{\mathrm{LS}}(\mathcal E_{\mathrm{gf},d},\Theta)=\frac34$.

\smallskip
Now let $k_0=\sum_{i=1}^d\xi_i^0\,dx_i$ 
be a sufficiently small nonzero constant flat connection. Flatness means
$[\xi_i^0,\xi_j^0]=0\qquad\text{for all }i,j$.
Every abelian subalgebra of $\mathfrak{su}(2)$ is one-dimensional. Thus all nonzero $\xi_i^0$ are collinear, and the associated $3\times d$ matrix $X_0$ has rank one.
We now justify the reduction at $k_0$ without assuming that $K_d$ is the kernel of the Hessian there. As $k_0\to0$, we have
$R D\mathcal M_d(k_0)|_Z\longrightarrow\Delta_H|_Z$
in operator norm. Since $\Delta_H|_Z$ is an isomorphism, $R D\mathcal M_d(k_0)|_Z$ remains an isomorphism for all sufficiently small $k_0$.
Write a point near $k_0$ uniquely as $k_0+h+z$, with $h\in K_d$ and $z\in Z$. The implicit-function theorem gives a unique analytic transverse correction $z=\psi_{k_0}(h)$. For every small $h\in K_d$, the form $k_0+h$ is constant, so
$R\mathcal M_d(k_0+h)=0$. Uniqueness therefore implies $\psi_{k_0}(h)=0$. 

\smallskip
The recentered reduced energy is consequently $\widetilde\Gamma_{k_0}(h)=\Gamma_d(k_0+h)-\Gamma_d(k_0)$.
For completeness, the full and reduced optimal exponents still agree at $k_0$. Along the reduction manifold, we have
$\mathcal M_d(k_0+h)\in K_d=Z^\circ$, and restriction gives
$D\widetilde\Gamma_{k_0}(h)=\rho\bigl(\mathcal M_d(k_0+h)\bigr)$,
where $\rho:Z^\circ\to K_d^*$ is an isomorphism. Hence the reduced gradient norm is equivalent to the full gradient norm along $z=0$.
For general small $h$ and $z$, uniform invertibility of
$$\int_0^1R D\mathcal M_d(k_0+h+sz)|_Z\,ds$$
gives the transverse estimate $\|z\|_{X_d}\le C\|\mathcal M_d(k_0+h+z)\|_{Y_d}$.
Using $K_d=Z^\circ$, integrating the energy along the segment in the $z$-direction, and applying the preceding estimate yields
$$\left| \mathcal E_{\mathrm{gf},d}(k_0+h+z) - \mathcal E_{\mathrm{gf},d}(k_0+h) \right| \le C\|\mathcal M_d(k_0+h+z)\|_{Y_d}^2.$$
Lipschitz continuity of $\mathcal M_d$ also gives $\|D\widetilde\Gamma_{k_0}(h)\|\le C\|\mathcal M_d(k_0+h+z)\|_{Y_d}$.

\smallskip
These estimates transfer every reduced Łojasiewicz inequality with exponent $\theta\ge\tfrac12$ to the full functional. Conversely, restriction to $z=0$ transfers every full inequality to the reduced germ. Thus their optimal exponents coincide.
Finally, applying the determinantal theorem to $Q_2$ at the rank-one matrix $X_0$ gives
$$\theta_{\mathrm{LS}}(\widetilde\Gamma_{k_0},0) =1-\frac{1}{2(2-1)}=\frac12.$$
Therefore $\theta_{\mathrm{LS}}(\mathcal E_{\mathrm{gf},d},\Theta+k_0)=\frac12$.
We have completed verifying both assertions. 
\end{proof}

\subsection{The determinantal SU(2) model on the two-torus}
\label{subsec:YM-Simon-pair}

For $d=2$, write $\xi=(x_1,x_2,x_3)$ and $\eta=(y_1,y_2,y_3)$ under an orthonormal identification $\mathfrak{su}(2)\cong\mathbb R^3$. Let
$\mathfrak m=(x_1,x_2,x_3,y_1,y_2,y_3)$ and let $I=(\Delta_{12},\Delta_{13},\Delta_{23})$, where $\Delta_{ij}=x_i y_j-x_j y_i$. Up to a positive constant, the reduced energy is
$q(\xi,\eta)=\|\xi\|^2\|\eta\|^2-\langle\xi,\eta\rangle^2=\Delta_{12}^2+\Delta_{13}^2+\Delta_{23}^2$.

\begin{corollary}\label{cor:YM-determinantal-Simon}
With the notation above,
$$
\overline{(q)}^{\,\mathbb R}=\overline{I^2}^{\,\mathbb R},
\qquad
\overline{J(q)}^{\,\mathbb R}=\overline{\mathfrak m I}^{\,\mathbb R}.
$$
The ordinary order valuation at the vertex computes the optimal ratio $3/4$. At every nonzero rank-one point of the determinantal cone $V_{\mathbb R}(I)$, the local gradient exponent of $q$ is $1/2$.
\end{corollary}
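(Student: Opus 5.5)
This corollary is the special case $m=3$, $n=2$, $k=2$ of Theorem~\ref{thm:determinantal-Simon}, applied to the $3\times2$ matrix $X$ whose columns are $\xi$ and $\eta$. The plan is to make this identification explicit and then read off each assertion.

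\textbf{Identification.} The $2\times2$ minors of $X$ are exactly $\Delta_{12},\Delta_{13},\Delta_{23}$, so $I=I_2$ and $q=Q_2(X)$. The ideal of $1\times1$ minors is generated by the six entries of $X$, so $I_1=\mathfrak m$.

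\textbf{Integral closures.} Theorem~\ref{thm:determinantal-Simon} with $k=2$ at the origin gives
$$
\overline{(q)}^{\,\mathbb R}=\overline{I_2^2}^{\,\mathbb R}=\overline{I^2}^{\,\mathbb R},
\qquad
\overline{J(q)}^{\,\mathbb R}=\overline{I_1I_2}^{\,\mathbb R}=\overline{\mathfrak m I}^{\,\mathbb R}.
$$
The theorem states these identities in the real analytic local ring at any matrix, in particular at $X_0=0$.

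\textbf{The ratio $3/4$ at the vertex.} The same theorem, with $s=0$, gives the optimal exponent $1-1/4=3/4$. It remains to check that the ordinary order valuation $\operatorname{ord}_0$ computes this ratio, and for this we use the integral-closure description rather than the arc supremum. The valuation $\operatorname{ord}_0$ is unchanged under passage to integral closure, so:
\begin{itemize}
\item $\operatorname{ord}_0 J(q)=\operatorname{ord}_0(\mathfrak m I)=1+2=3$;
\item $\operatorname{ord}_0 q=\operatorname{ord}_0 I^2=4$, which is also directly visible since $q$ is a nonzero quartic form.
\end{itemize}
The ratio is therefore $3/4$, matching the exponent. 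This valuation is the divisor of the blowup of the origin, so it appears among the $F_i$ in the maximum of Theorem~\ref{thm:Simon-pair-formula} after a principalization that starts with that blowup. Concretely, it is realized by any radial arc $t\mapsto t(\xi_0,\eta_0)$ with $\xi_0\times\eta_0\neq0$, as in Theorem~\ref{thm:order_bound}. The lower bound $3/4$ is already given by Theorem~\ref{thm:order_bound} with $m=4$.

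\textbf{Rank-one points.} A nonzero point of $V_{\mathbb R}(I)$ is a nonzero matrix with all $2\times2$ minors vanishing, hence a matrix of rank exactly $s=1$. Theorem~\ref{thm:determinantal-Simon} with $k=2$, $s=1$ gives the local exponent $1-\tfrac12=\tfrac12$.

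\textbf{Expected difficulty.} There is no genuine obstacle. The only point needing care is the claim that $\operatorname{ord}_0$ "computes" the ratio: one must invoke invariance of valuations under integral closure, or else the explicit radial arc, rather than the displayed identities alone. One should also confirm that the normalization $q=\|\xi\|^2\|\eta\|^2-\langle\xi,\eta\rangle^2$ is the Lagrange identity for $Q_2$, which it is.
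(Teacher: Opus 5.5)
Your proposal is correct and is essentially the paper's argument: you specialize Theorem~\ref{thm:determinantal-Simon} to the $3\times2$ matrix with columns $\xi,\eta$ and $k=2$, so that $I_2=I$, $I_1=\mathfrak m$ and $Q_2=q$; you read off $\operatorname{ord}_0 q=4$ and $\operatorname{ord}_0 J(q)=3$ at the vertex; and you take $s=k-1=1$ for the nonzero rank-one points. Your extra justification of the order computation, via invariance of $\operatorname{ord}_0$ under real integral closure or via the radial arc, is valid but not needed, since $\operatorname{ord}_0 J(q)=3$ already follows from the fact that $J(q)$ is generated by the cubic partial derivatives of the quartic form $q$, which do not all vanish.
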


\begin{proof}
Apply Theorem~\ref{thm:determinantal-Simon} to a $3\times2$ matrix with $k=2$. Then $I_2=I$, $I_1=\mathfrak m$, and $Q_2=q$. At the origin, $\operatorname{ord}(q)=4$ and $\operatorname{ord}(J(q))=3$, so the ordinary blowup valuation gives $3/4$. The rank-one statement is the case $s=k-1$ of the same theorem.
\end{proof}

The Yang--Mills application is a case in which geometry identifies the reduced energy exactly, after which the optimal exponent can be read from a concrete algebraic singularity. The next application exhibits a complementary phenomenon. For a resonant semilinear equation the transverse correction is generally nonzero, and the central problem is instead to determine how successive terms of the reduced germ emerge when lower-order obstructions cancel. The semilinear model therefore tests the higher-order content of the intrinsic reduction rather than only its exact-restriction case.

\section{A resonant semilinear Dirichlet energy}\label{sec:semilinear}

\subsection{Setting and main result}

Let $\Omega\subset\mathbb{R}^d$ be a bounded, connected domain with $C^{1,1}$ boundary. We impose homogeneous Dirichlet boundary conditions. Fix an integer $N\ge3$, a real number $\lambda$, and an exponent $p>\max\{d,2\}$, with $p'=p/(p-1)$. Consider the energy of the semilinear model
$$\mathcal{E}_N(u)=\frac12\int_\Omega\bigl(|\nabla u|^2-\lambda u^2\bigr)\,dx
       +\frac1N\int_\Omega u^N\,dx.$$
We use the asymmetric Banach pair $X:=W^{1,p}_0(\Omega)$ and $Y:=\bigl(W^{1,p'}_0(\Omega)\bigr)^*=:W^{-1,p}(\Omega)$.
Since $p>2$, the inclusion $X\hookrightarrow W^{1,p'}_0(\Omega)$ is continuous, so restriction of the duality on $Y\times W^{1,p'}_0$ gives a continuous pairing on 
$Y\times X$. Moreover, $p>d$ gives $X\hookrightarrow L^\infty(\Omega)$.

\smallskip
For $u,v\in X$, we compute $d\mathcal{E}_N(u)[v]=\int_\Omega(\nabla u\cdot\nabla v-\lambda uv+u^{N-1}v)\,dx=\langle\mathcal M_N(u),v\rangle_{Y,X}$, where $\mathcal{M}_N(u):=-\Delta_D u-\lambda u+u^{N-1}\in Y$
and $-\Delta_D:X\to Y$ is the weak Dirichlet realization, defined by $\langle-\Delta_Du,\varphi\rangle=\int_\Omega\nabla u\cdot\nabla\varphi\,dx$, and distinct from the self-adjoint $L^2$ Dirichlet Laplacian. To avoid ambiguity, write $\sigma_D(-\Delta)$ for the spectrum of the self-adjoint Dirichlet Laplacian on $L^2(\Omega)$. 
The map $\mathcal M_N:X\to Y$ is a real-analytic polynomial, and its derivative at the origin is $L:=D\mathcal M_N(0)=-\Delta_D-\lambda I:X\to Y$.

\smallskip
The optimal exponent depends on whether $\lambda$ is a Dirichlet eigenvalue.
\begin{theorem} \label{thm:semilinear_exp}
The following statements hold.
\begin{enumerate}
\item If $\lambda\notin\sigma_D(-\Delta)$, then $\theta_{\mathrm{LS}}(\mathcal E_N,0)=1/2$.
\item Suppose $\lambda\in\sigma_D(-\Delta)$ and let $K=\ker(-\Delta_D-\lambda I)$. If $DP_N(\xi)\ne0$ for every $\xi\in K\setminus\{0\}$, where $P_N(\xi)=\frac1N\int_\Omega\xi^N\,dx$, then
$$
\theta_{\mathrm{LS}}(\mathcal E_N,0)=\frac{N-1}{N}.
$$
In particular, this conclusion holds for every even $N$, regardless of the multiplicity of the eigenvalue.
\item Suppose $\lambda\in\sigma_D(-\Delta)$ and $P_N$ vanishes identically on $K$. Let $Z$ be the $L^2$-orthogonal complement of $K$ in $X$. Then $\xi^{N-1}\in\operatorname{Ran}L$ for every $\xi\in K$, and the homogeneous polynomial
$$
\mathcal Q_{2N-2}(\xi):=-\frac12
\left\langle \xi^{N-1},(L|_Z)^{-1}\xi^{N-1}\right\rangle_{Y,X}
$$
is well-defined on $K$. If $D\mathcal Q_{2N-2}(\xi)\ne0$ for every $\xi\in K\setminus\{0\}$, then
$$
\theta_{\mathrm{LS}}(\mathcal E_N,0)=\frac{2N-3}{2N-2}.
$$
\end{enumerate}
\end{theorem}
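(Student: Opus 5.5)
The plan is to verify the compatibility hypotheses of Section~\ref{sec:framework} once, and then treat the three cases through Theorem~\ref{thm:exact_reduction}, Theorem~\ref{thm:order_bound} and Corollary~\ref{cor:order_bound}.

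\emph{Functional setting.} By elliptic $W^{1,p}$ theory on $C^{1,1}$ domains, $L=-\Delta_D-\lambda I:X\to Y$ is Fredholm of index zero. Its kernel $K$ is the finite-dimensional Dirichlet eigenspace, which consists of smooth functions by regularity. Take $Z$ to be the $L^2$-orthogonal complement of $K$ in $X$, and take $Y_K=K$, embedded in $Y$ via the $L^2$ pairing, exactly as in the Yang--Mills section. Then $Z^\circ=K$, $\operatorname{Ran}L=K^\circ$, and $\rho:K\to K^*$ is the Riesz isomorphism.

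\emph{Case (1).} If $\lambda\notin\sigma_D(-\Delta)$, then $K=0$. The reduced germ is then trivially zero, and Corollary~\ref{cor:order_bound}(i) gives exponent $1/2$.

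\emph{Case (2).} Here I would compute the reduced energy to leading order. The correction $\psi$ satisfies $L\psi(\xi)=-R(\xi+\psi)^{N-1}$, so $\psi(\xi)=O(|\xi|^{N-1})$. Hence
\[
\Gamma(\xi)=E(\xi+\psi(\xi))=P_N(\xi)+O(|\xi|^{2N-2}),
\]
where the remainder collects the quadratic term $\tfrac12\langle L\psi,\psi\rangle$ and the cross term $\langle \xi^{N-1},\psi\rangle$. Since $2N-2>N$, the leading homogeneous term of $\Gamma$ is $P_N$. The argument then runs as follows.
\begin{itemize}
\item The lower bound $(N-1)/N$ comes from Theorem~\ref{thm:order_bound}. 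It needs $P_N\not\equiv0$, which follows from the hypothesis via Euler's identity: $DP_N(\xi)[\xi]=NP_N(\xi)$, and by homogeneity $DP_N\ne0$ off the origin forces $P_N\not\equiv 0$.
\item For the upper bound, $DP_N$ is homogeneous of degree $N-1$ and nonvanishing on the unit sphere, so $|DP_N(\xi)|\ge c|\xi|^{N-1}$.
\item Since $D\Gamma=DP_N+O(|\xi|^{N})$, this gives $|D\Gamma(\xi)|\ge c'|\xi|^{N-1}$ near $0$.
\item Combined with $|\Gamma|\le C|\xi|^N$, this yields $|\Gamma|^{(N-1)/N}\le C|D\Gamma|$.
\item For even $N$, $DP_N(\xi)[\xi]=\int\xi^N>0$ for $\xi\neq 0$, so the hypothesis holds automatically.
\end{itemize}

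\emph{Case (3).} The key step is to expand $\Gamma$ to order $2N-2$.
\begin{itemize}
\item \emph{Solvability.} $P_N\equiv0$ on $K$ implies $DP_N\equiv0$, that is, $\int\xi^{N-1}h=0$ for all $h\in K$. So $\xi^{N-1}\in K^\circ=\operatorname{Ran}L$, using Lemma~\ref{lem:compatibility-consequences}, and $R\xi^{N-1}=\xi^{N-1}$.
\item \emph{Leading correction.} The reduction equation then gives $\psi(\xi)=-(L|_Z)^{-1}\xi^{N-1}+O(|\xi|^{2N-3})$.
\item \emph{Energy expansion.} Expanding $E(\xi+\psi)$ gives the terms
\[
\tfrac12\langle L\psi,\psi\rangle=\tfrac12\langle\xi^{N-1},(L|_Z)^{-1}\xi^{N-1}\rangle+\dots
\]
and
\[
\tfrac1N\int(\xi+\psi)^N=0+\int\xi^{N-1}\psi+O(|\xi|^{N-2}|\psi|^2)=-\langle\xi^{N-1},(L|_Z)^{-1}\xi^{N-1}\rangle+\dots .
\]
Together these give $\Gamma=\mathcal Q_{2N-2}+O(|\xi|^{2N-1})$. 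The error orders need care: $N-2+2(N-1)=3N-4\ge 2N-1$ for $N\ge3$, and the correction to $\psi$ of order $2N-3$ enters only against $\xi^{N-1}$ or multiplied by $L\psi$, yielding order $\ge 3N-4$.
\item \emph{Conclusion.} The same argument as in case (2), with $m=2N-2$, now applies: Euler's identity shows $\mathcal Q_{2N-2}\not\equiv0$, which gives the lower bound, and nonvanishing of $D\mathcal Q_{2N-2}$ on the sphere gives the upper bound. The result is the exponent $(2N-3)/(2N-2)$.
\end{itemize}

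\emph{Main obstacle.} I expect the hardest part to be the careful remainder bookkeeping in case (3), including confirming that $D\Gamma=D\mathcal Q_{2N-2}+O(|\xi|^{2N-2})$. For that I would differentiate the analytic expansion termwise, using the gradient identity of Lemma~\ref{lem:gradient_identity}, namely $D\Gamma(\xi)=\rho(M(c(\xi)))$.
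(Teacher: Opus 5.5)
Your proposal is correct and follows essentially the same route as the paper. It uses the compatible $L^2$ splitting and the leading correction $\psi(\xi)=-(L|_Z)^{-1}R(\xi^{N-1})+O(\|\xi\|^{2N-3})$. From these come the expansions $\Gamma=P_N+O(\|\xi\|^{2N-2})$ and, in the cancelling case, $\Gamma=\mathcal Q_{2N-2}+O(\|\xi\|^{3N-4})$, both differentiated termwise on the finite-dimensional $K$. The exponent then follows from comparing $|\Gamma|\lesssim\|\xi\|^m$ with $\|D\Gamma\|\gtrsim\|\xi\|^{m-1}$.

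The differences are cosmetic: you handle the nonresonant case via Corollary~\ref{cor:order_bound}(i) with $K=\{0\}$, which unwinds to the paper's inverse-function/Taylor argument. You also take the lower bounds from Theorem~\ref{thm:order_bound} instead of repeating the argument along the ray $ta$.
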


The three cases of Theorem \ref{thm:semilinear_exp} correspond to successive levels of degeneracy of the reduced problem. Away from resonance there is no kernel and hence no residual singularity. At resonance, the first possible obstruction is the restriction $P_N$ of the nonlinear energy to the eigenspace $K$; when $P_N$ vanishes identically, the transverse correction generates the next obstruction $Q_{2N-2}$. We now establish the compatible Fredholm splitting and compute the reduced energy to the order needed to make this hierarchy precise.

\subsection{The Fredholm Hessian and compatible splittings}

\begin{lemma} \label{lem:compatible}
Preserving the notation $L=-\Delta_D-\lambda I:X\to Y$ from above, we have

\begin{enumerate}
\item The operator $L$ is symmetric and Fredholm of index zero.
\item Its kernel is the finite-dimensional eigenspace $K:=\ker L=\{\xi\in X:-\Delta_D\xi=\lambda\xi\}$.
\item There are compatible topological splittings $X=K\oplus Z$, $Y=\operatorname{Ran}L\oplus K$, and $\operatorname{Ran}L=K^\circ$, where $Z$ is the $L^2$-orthogonal complement of $K$ in $X$, $K$ is embedded in $Y$ through the $L^2$ pairing, and $K^\circ:=\{T\in Y:\langle T,k\rangle_{Y,X}=0\text{ for every }k\in K\}$.
\end{enumerate}
\end{lemma}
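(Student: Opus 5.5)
The plan is to reduce everything to standard $L^p$ elliptic theory for the Dirichlet Laplacian on the $C^{1,1}$ domain $\Omega$, and then verify the compatibility hypotheses of Section~\ref{sec:framework} exactly as in the Yang--Mills case.

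\emph{Step 1: the operator and its kernel.} Symmetry of $L$ is immediate from the definition of $-\Delta_D$ by the weak form:
$$\langle Lu,v\rangle_{Y,X}=\int_\Omega(\nabla u\cdot\nabla v-\lambda uv)\,dx,$$
which is symmetric in $u,v\in X$. For the kernel, if $\xi\in X$ satisfies $L\xi=0$, then $\xi\in H^1_0$ is a weak Dirichlet eigenfunction. Hence $\xi$ lies in the $L^2$ eigenspace of $\sigma_D(-\Delta)$ for $\lambda$ (or $\xi=0$ if $\lambda\notin\sigma_D(-\Delta)$). That eigenspace is finite dimensional. By elliptic regularity on $C^{1,1}$ domains its elements lie in $W^{2,q}\cap W^{1,q}_0$ for all $q<\infty$, hence in $X$. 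This identifies $K$.

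\emph{Step 2: isomorphism and Fredholm property.} I will use the known fact that $-\Delta_D+\mu:W^{1,q}_0\to W^{-1,q}$ is an isomorphism for $\mu\ge0$ and $1<q<\infty$ on $C^{1,1}$ domains. Then $L=(-\Delta_D+1)-(\lambda+1)I$. The inclusion $I:X\to Y$ is compact, since $X\hookrightarrow L^p$ compactly and $L^p\hookrightarrow W^{-1,p}$ continuously. So $L$ is Fredholm of index zero as a compact perturbation of an isomorphism.

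\emph{Step 3: the splittings.} Choose an $L^2$-orthonormal basis $e_j$ of $K$. Define $P_Xu=\sum_j\langle u,e_j\rangle_{L^2}e_j$ and $P_YT=\sum_j\langle T,e_j\rangle_{Y,X}e_j$. Both are bounded projections, since $e_j\in X$ and $X\hookrightarrow L^2$. This gives $X=K\oplus Z$ with $Z=\ker P_X$ the $L^2$-complement, and $Y=\ker P_Y\oplus K$, where $\ker P_Y=K^\circ$. By symmetry, $\operatorname{Ran}L\subseteq K^\circ$. Since both are closed subspaces of codimension $\dim K$ (index zero and the splitting just obtained), and one contains the other, they coincide. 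Hence $Y=\operatorname{Ran}L\oplus K$ and $\operatorname{Ran}L=K^\circ$.

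Finally I check $Z^\circ=K$ and that $\rho:K\to K^*$ is the Riesz isomorphism. This is the same density argument as in the Yang--Mills section: $X$ is dense in $W^{1,p'}_0$, so an element of $Z^\circ$ agrees there with $\sum_j\langle T,e_j\rangle e_j$.

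\emph{Main obstacle.} The main obstacle is the $L^p$ isomorphism theorem for $-\Delta_D+\mu$ from $W^{1,p}_0$ to $W^{-1,p}$ with $p>2$ on a $C^{1,1}$ domain. I would cite it rather than prove it, for instance via the $W^{2,q}$ theory and duality, or via results of Jerison--Kenig type, which are more than sufficient for $C^{1,1}$ boundaries. Everything else is finite-dimensional linear algebra together with the density argument already used earlier in the paper.
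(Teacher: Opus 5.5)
Your proposal is correct and follows essentially the same route as the paper. Both arguments cite the $W^{1,p}_0\to W^{-1,p}$ isomorphism for $-\Delta_D+1$ (the paper uses Grisvard) and use Rellich compactness to get Fredholm index zero. Both then use weak-form symmetry, the $L^2$-projections $P_X,P_Y$ with a codimension count, and the density argument for $Z^\circ=K$. The only difference is cosmetic: you note that $\ker P_Y=K^\circ$ holds by definition, so one codimension comparison gives both $Y=\operatorname{Ran}L\oplus K$ and $\operatorname{Ran}L=K^\circ$, whereas the paper runs that comparison twice.
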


\begin{proof} Let $A=-\Delta_D+I:X\to Y$. The standard $W^{1,p}_0$ theory for the Dirichlet Laplacian on a bounded $C^{1,1}$ domain makes $A$ a topological isomorphism for $1<p<\infty$; see, for example, Grisvard~\cite{Grisvard1985}. The natural inclusion $J:X\to Y$ is compact: Rellich compactness gives a compact inclusion $X\hookrightarrow L^p(\Omega)$, while $L^p(\Omega)\hookrightarrow Y$ continuously. Since
$L=A-(\lambda+1)J$, it follows that $L$ is a compact perturbation of an isomorphism. Thus $L$ is Fredholm of index zero. Its kernel consists exactly of the $L^2$ Dirichlet eigenfunctions at $\lambda$; elliptic regularity places these eigenfunctions in $X$, and hence $K$ is finite dimensional.

\smallskip
For $u\in X$, the distribution $Lu\in Y$ is defined weakly by $\langle Lu,\varphi\rangle=\int_\Omega(\nabla u\cdot\nabla\varphi-\lambda u\varphi)\,dx$ for $\varphi\in W^{1,p'}_0(\Omega)$. Taking $\varphi=v\in X$ gives $\langle Lu,v\rangle_{Y,X}=\int_\Omega(\nabla u\cdot\nabla v-\lambda uv)\,dx$.
The right-hand side is unchanged when $u$ and $v$ are interchanged. Hence $\langle Lu,v\rangle_{Y,X}=\langle Lv,u\rangle_{Y,X}$.
Thus $L$ is symmetric with respect to the specified pairing. 

\smallskip
Choose an $L^2$-orthonormal basis $e_1,\dots,e_r$ of $K$ and define $P_Xu:=\sum_{j=1}^r(u,e_j)_{L^2}e_j$ and $Z:=\ker P_X$. This yields $X=K\oplus Z$. Define similarly $P_YT:=\sum_{j=1}^r\langle T,e_j\rangle_{Y,X}e_j$.
Symmetry gives $\operatorname{Ran}L\subseteq\ker P_Y$. Since $L$ is Fredholm of index zero, $\operatorname{codim}\operatorname{Ran}L=\dim K=r$, while $P_Y:Y\to K$ is onto and hence $\operatorname{codim}\ker P_Y=r$. Therefore $\operatorname{Ran}L=\ker P_Y$ and $Y=\operatorname{Ran}L\oplus K$. Symmetry also gives $\operatorname{Ran}L\subseteq K^\circ$; again both closed subspaces have codimension $r$, so $\operatorname{Ran}L=K^\circ$.

The restriction map $\rho:K\to K^*$, $\rho(k)(h)=(k,h)_{L^2}$, is the finite-dimensional Riesz isomorphism. To see that $K=Z^\circ$, note first that every $k\in K$ annihilates $Z$ by $L^2$ orthogonality. Conversely, if $T\in Z^\circ$, then for $u=P_Xu+z\in X$ one has $\langle T,u\rangle=\sum_j(u,e_j)_{L^2}\langle T,e_j\rangle$. Thus $T$ agrees on $X$ with the element $\sum_j\langle T,e_j\rangle e_j\in K$; since $X=W^{1,p}_0(\Omega)$ is dense in $W^{1,p'}_0(\Omega)$, the two elements of $Y$ are equal. Finally, $L|_Z:Z\to\operatorname{Ran}L$ is an isomorphism.
\end{proof}

\subsection{Lyapunov--Schmidt reduction}

Let $R:=I-P_Y:Y\to \operatorname{Ran}  L$. For $\xi\in K$ and $z\in Z$, define $F(\xi,z):=R \mathcal{M}_N(\xi+z)$.
Since $D_zF(0,0)=L|_Z:Z\to\operatorname{Ran}L$ is an isomorphism, the analytic implicit-function theorem gives neighborhoods of the origin and a unique analytic map
$\psi:K\supset V\longrightarrow Z $
satisfying $\psi(0)=0, \, D\psi(0)=0, \, R \mathcal{M}_N(\xi+\psi(\xi))=0$.

\begin{definition}
The reduced energy is the finite-dimensional analytic function
$$\Gamma(\xi):=\mathcal{E}_N(\xi+\psi(\xi)), \qquad \xi\in V\subset K. $$
\end{definition}

It is the effective energy obtained after solving the Euler--Lagrange equation in every transverse direction. The exact-reduction theorem gives $\theta_{\mathrm{LS}}(\mathcal E_N,0)=\theta_R(\Gamma,0)$; in fact, the two sets of admissible exponents coincide.

\smallskip
The next result considers the leading terms of the reduction. Fix any norm on the finite-dimensional space $K$ and its associated dual norm on $K^*$. The asymptotic orders below are independent of this choice, although the implied constants may change.

\begin{lemma}  \label{lem:leading}
Let $A:=(L|_Z)^{-1}:\operatorname{Ran}L\to Z$ and $g(\xi):=R(\xi^{N-1})$. As $\xi\to0$ in $K$, one has
\begin{align*}
\psi(\xi)
&=-A g(\xi)+O(\|\xi\|^{2N-3}),\\
\Gamma(\xi)
&=P_N(\xi)-\frac12\langle g(\xi),A g(\xi)\rangle_{Y,X}
  +O(\|\xi\|^{3N-4}),\\
D\Gamma(\xi)
&=DP_N(\xi)
  +D\!\left[-\frac12\langle g(\xi),A g(\xi)\rangle_{Y,X}\right]
  +O(\|\xi\|^{3N-5}),
\end{align*}
where $P_N(\xi):=\frac1N\int_\Omega\xi^N\,dx$. In particular, the second displayed term in $\Gamma$ is homogeneous of degree $2N-2$.
\end{lemma}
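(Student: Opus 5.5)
We first establish the expansion of $\psi$, then substitute it into $\Gamma$, and finally differentiate.

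\textbf{Expansion of $\psi$.} Write the reduction equation using $L\xi=0$ and $R L z = Lz$ for $z\in Z$:
\[
0=R\mathcal M_N(\xi+\psi)=L\psi+R\bigl((\xi+\psi)^{N-1}\bigr).
\]
Since $X\hookrightarrow L^\infty$ and $L^\infty\subset L^p\hookrightarrow Y$, the map $u\mapsto u^{N-1}$ is a continuous polynomial $X\to Y$. Applying $A$ gives the fixed-point identity
\[
\psi(\xi)=-A\,R\bigl((\xi+\psi(\xi))^{N-1}\bigr).
\]
Because $\psi(0)=0$ and $D\psi(0)=0$, we have $\|\psi(\xi)\|_X=o(\|\xi\|)$. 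The right side is then $O(\|\xi\|^{N-1})$, so $\|\psi\|=O(\|\xi\|^{N-1})$. Expanding,
\[
(\xi+\psi)^{N-1}-\xi^{N-1}=O\bigl(\|\xi\|^{N-2}\|\psi\|\bigr)=O\bigl(\|\xi\|^{2N-3}\bigr)
\]
in $L^\infty$, hence in $Y$. Since $A$ and $R$ are bounded, this gives $\psi=-Ag(\xi)+O(\|\xi\|^{2N-3})$.

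\textbf{Expansion of $\Gamma$.} Split the energy as
\[
\mathcal E_N(\xi+\psi)=\tfrac12\langle L(\xi+\psi),\xi+\psi\rangle+\tfrac1N\int(\xi+\psi)^N .
\]
Symmetry and $L\xi=0$ reduce the quadratic part to $\tfrac12\langle L\psi,\psi\rangle$. Taylor expansion of the power gives
\[
\tfrac1N\int(\xi+\psi)^N=P_N(\xi)+\int\xi^{N-1}\psi+O\bigl(\|\xi\|^{N-2}\|\psi\|^2\bigr).
\]
Since $\psi\in Z$ is $L^2$-orthogonal to $K$, and $P_Y(\xi^{N-1})\in K$, we have $\int\xi^{N-1}\psi=\langle g(\xi),\psi\rangle$. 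Insert $\psi=-Ag+r$ with $r=O(\|\xi\|^{2N-3})$, and use $L A g=g$ together with symmetry. Then
\[
\tfrac12\langle L\psi,\psi\rangle+\langle g,\psi\rangle=-\tfrac12\langle g,Ag\rangle+O\bigl(\|g\|\,\|r\|+\|r\|^2\bigr).
\]
Here $\|g\|\,\|r\|=O(\|\xi\|^{N-1+2N-3})=O(\|\xi\|^{3N-4})$. The power remainder is $O(\|\xi\|^{N-2+2N-2})=O(\|\xi\|^{3N-4})$ as well. This gives the expansion of $\Gamma$.

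\textbf{Expansion of $D\Gamma$ (the main obstacle).} Derivative bounds do not follow from value bounds by themselves. I would use analyticity instead. The maps $\psi$ and $\Gamma$ are real analytic on a ball in the finite-dimensional space $K$, so they have convergent expansions in homogeneous polynomials. The difference
\[
\Gamma-P_N+\tfrac12\langle g,Ag\rangle
\]
is analytic. Because it is $O(\|\xi\|^{3N-4})$, its homogeneous components below degree $3N-4$ vanish. The same argument applied to $\psi$ shows $\psi+Ag$ has no homogeneous terms below degree $2N-3$, which gives an independent check. An analytic germ whose expansion starts in degree $\ge 3N-4$ has derivative $O(\|\xi\|^{3N-5})$: differentiate the convergent series termwise, using Cauchy estimates on a slightly smaller ball to bound the derivatives of the homogeneous components.

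Finally, $g$ is homogeneous of degree $N-1$ and the pairing $\langle g,Ag\rangle$ is bilinear, so the second term is homogeneous of degree $2N-2$.
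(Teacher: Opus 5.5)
Your proposal is correct and follows essentially the same route as the paper. Both arguments use the fixed-point bound $\psi=O(\|\xi\|^{N-1})$, then substitute $\psi=-Ag+r$ using the symmetry of $L$ and $\langle P_Y(\xi^{N-1}),\psi\rangle=0$, and finally invoke analyticity on the finite-dimensional space $K$ to pass the remainder bound to $D\Gamma$. You add two details the paper omits: you spell out the homogeneous-expansion and Cauchy-estimate argument for differentiating the remainder, which the paper only asserts, and your bookkeeping shows that the $\langle g,r\rangle$ cross terms cancel exactly, although this does not improve the final $O(\|\xi\|^{3N-4})$ bound.
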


\begin{proof}
The transverse equation is $L\psi(\xi)+R(\xi+\psi(\xi))^{N-1}=0$.
Since $D\psi(0)=0$, after shrinking the neighborhood we may assume $\|\psi(\xi)\|\le\|\xi\|$. Applying $(L|_Z)^{-1}$ to the transverse equation and using the continuity of the polynomial map $u\mapsto u^{N-1}$ from $X$ to $Y$ then gives
$\psi(\xi)=O(\|\xi\|^{N-1})$. Hence $(\xi+\psi)^{N-1}=\xi^{N-1}+O(\|\xi\|^{N-2}\|\psi\|+\|\psi\|^{N-1})=\xi^{N-1}+O(\|\xi\|^{2N-3})$, and applying $A$ gives the asserted expansion for $\psi$.
Write $\psi_1=-Ag$. Since $L\xi=0$ and $L$ is symmetric, $\Gamma(\xi)=\frac12\langle L\psi,\psi\rangle_{Y,X}+\frac1N\int_\Omega(\xi+\psi)^N\,dx$.
The error $\psi-\psi_1$ has order $2N-3$, and $L\psi_1=-g$. Therefore
\begin{align*}
\frac12\langle L\psi,\psi\rangle_{Y,X}
&=\frac12\langle g,Ag\rangle_{Y,X}+O(\|\xi\|^{3N-4}),\\
\frac1N\int_\Omega(\xi+\psi)^N\,dx
&=P_N(\xi)+\langle\xi^{N-1},\psi\rangle_{Y,X}+O(\|\xi\|^{3N-4}).
\end{align*}
Since $\xi^{N-1}=P_Y(\xi^{N-1})+g$ with $P_Y(\xi^{N-1})\in K=Z^\circ$ and $Ag\in Z$, one has
$\langle \xi^{N-1},Ag\rangle_{Y,X}=\langle g,Ag\rangle_{Y,X}$. Thus $\langle\xi^{N-1},\psi\rangle_{Y,X}=-\langle g,Ag\rangle_{Y,X}+O(\|\xi\|^{3N-4})$, and adding the two expansions gives the stated formula for $\Gamma$.
The expansion takes place on the finite-dimensional space $K$ and all terms are analytic, so differentiation lowers the order of the remainder by one and yields the formula for $D\Gamma$.
\end{proof}

\begin{proof}[Proof of Theorem~\ref{thm:semilinear_exp}]
If $\lambda\notin\sigma_D(-\Delta)$, then $K=\{0\}$ and $L$ is an isomorphism. The inverse-function theorem applied to $\mathcal M_N$ gives $\|u\|_X\le C\|\mathcal M_N(u)\|_Y$ near zero, while Taylor's theorem gives $|\mathcal E_N(u)|\le C\|u\|_X^2$. Thus $1/2$ is admissible, and it is optimal by definition.

Suppose next that $\lambda\in\sigma_D(-\Delta)$ and $DP_N(\xi)\ne0$ for every nonzero $\xi\in K$. Since $DP_N$ is homogeneous of degree $N-1$ and the unit sphere of $K$ is compact, $\|DP_N(\xi)\|_{K^*}\ge c\|\xi\|^{N-1}$ for some $c>0$. Lemma~\ref{lem:leading} then gives, after shrinking the neighborhood, $|\Gamma(\xi)|\le C\|\xi\|^N$ and $\|D\Gamma(\xi)\|_{K^*}\ge c'\|\xi\|^{N-1}$. Hence $(N-1)/N$ is admissible. Since $P_N$ is not identically zero, choose $a\in K$ with $P_N(a)\ne0$. Along $\xi=ta$ one has $\Gamma(ta)=P_N(a)t^N+O(|t|^{2N-2})$ and $\|D\Gamma(ta)\|_{K^*}=O(|t|^{N-1})$, so any admissible exponent satisfies $N\theta\ge N-1$. Thus $\theta_{\mathrm{LS}}=(N-1)/N$. If $N$ is even, $P_N(\xi)=N^{-1}\int_\Omega|\xi|^Ndx>0$ for $\xi\ne0$, and Euler's identity gives $DP_N(\xi)[\xi]=NP_N(\xi)\ne0$.

Finally assume $P_N\equiv0$ on $K$. Then $DP_N\equiv0$, and for every $\xi,h\in K$,
$0=DP_N(\xi)[h]=\langle\xi^{N-1},h\rangle_{Y,X}$. Thus $P_Y(\xi^{N-1})=0$ and $\xi^{N-1}\in\operatorname{Ran}L$. In Lemma~\ref{lem:leading} we therefore have $g(\xi)=\xi^{N-1}$, so
$$
\Gamma(\xi)=\mathcal Q_{2N-2}(\xi)+O(\|\xi\|^{3N-4}),
\qquad
D\Gamma(\xi)=D\mathcal Q_{2N-2}(\xi)+O(\|\xi\|^{3N-5}).
$$
The hypothesis on $D\mathcal Q_{2N-2}$ and compactness of the unit sphere give $\|D\mathcal Q_{2N-2}(\xi)\|\ge c\|\xi\|^{2N-3}$. After shrinking the neighborhood, the displayed expansions therefore imply
$|\Gamma(\xi)|\le C\|\xi\|^{2N-2}$ and $\|D\Gamma(\xi)\|\ge c'\|\xi\|^{2N-3}$. Consequently,
$\|D\Gamma(\xi)\|\ge C'|\Gamma(\xi)|^{(2N-3)/(2N-2)}$, so $(2N-3)/(2N-2)$ is admissible. Since $\mathcal Q_{2N-2}$ is not identically zero, choose $a$ with $\mathcal Q_{2N-2}(a)\ne0$ and argue along $ta$ exactly as above to obtain the reverse inequality. Exact reduction completes the proof.
\end{proof}

For even $N$, Theorem~\ref{thm:semilinear_exp}(2) recovers the exponent in Haraux--Jendoubi--Kavian~\cite[Theorem~2.2]{HarauxJendoubiKavian2003}; their Theorem~2.5 also treats the simple-kernel cubic noncancellation case. The next theorem shows that, for a simple resonance, all possible higher cancellations occur in a rigid arithmetic progression of orders.

Theorem \ref{thm:semilinear_exp} stops as soon as either the first or the second obstruction is nonzero. When the resonance is simple, the one-dimensional reduced problem has enough additional structure to continue this process to every order. Rather than introducing successive obstruction coefficients one at a time, the next result packages all possible cancellations into a single analytic germ.

\begin{theorem}\label{thm:obstruction-ladder}
Assume that $\lambda$ is a simple Dirichlet eigenvalue and $K=\mathbb Re$, where $\|e\|_{L^2}=1$. There is a real analytic germ $H:(\mathbb R,0)\to\mathbb R$ such that
$$
\Gamma(te)=t^2H(t^{N-2}).
$$
Writing $H(s)=\sum_{r\ge1}c_rs^r$, one has
$$
c_1=\frac1N\int_\Omega e^N\,dx.
$$
If $c_1=0$, then $e^{N-1}\in\operatorname{Ran}L$ and
$$
c_2=-\frac12
\left\langle e^{N-1},(L|_Z)^{-1}e^{N-1}\right\rangle_{Y,X}.
$$
If $H\equiv0$, then $\Gamma\equiv0$, the functional is Morse--Bott at the origin, and $\theta_{\mathrm{LS}}=1/2$. Otherwise, if $r_*$ is the least index with $c_{r_*}\ne0$, then
$$
\operatorname{ord}_0\Gamma=2+r_*(N-2),
\qquad
\theta_{\mathrm{LS}}(\mathcal E_N,0)
=1-\frac{1}{2+r_*(N-2)}.
$$
Thus the possible non-Morse--Bott exponents for a simple resonance belong to the discrete ladder
$$
1-\frac1N,\quad
1-\frac1{2N-2},\quad
1-\frac1{3N-4},\quad
1-\frac1{4N-6},\ \ldots .
$$
\end{theorem}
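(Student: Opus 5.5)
The plan rests on a scaling symmetry of the reduction. In one variable write $\xi=te$ and $\psi(te)=:w(t)\in Z$. The transverse equation is
$$
Lw(t)+R\bigl((te+w(t))^{N-1}\bigr)=0 .
$$
The key step is the ansatz $w(t)=t\,W(t^{N-2})$ with $W$ analytic, $W(0)=0$, and $W(s)\in Z$. Substituting and dividing by $t$ gives
$$
LW+s\,R\bigl((e+W)^{N-1}\bigr)=0,\qquad s=t^{N-2}.
$$
Here the factor $t^{N-1}/t=t^{N-2}$ is exactly $s$. This is an analytic equation in $(s,W)$ whose $W$-derivative at $(0,0)$ is $L|_Z$, so the analytic implicit function theorem gives a unique analytic germ $W(s)$ with $W(0)=0$. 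For real $t$ near $0$, $tW(t^{N-2})$ solves the original transverse equation and is small, so uniqueness in the definition of $\psi$ forces $\psi(te)=tW(t^{N-2})$. Negative $t$ needs no separate treatment, since $t^{N-2}$ is still small and real.

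Next I would substitute into the energy. Using $Le=0$ and the symmetry of $L$,
$$
\Gamma(te)=\tfrac12 t^2\langle LW,W\rangle+\tfrac1N t^N\!\int_\Omega(e+W)^N=t^2H(s),
$$
with
$$
H(s)=\tfrac12\langle LW(s),W(s)\rangle+\tfrac sN\int_\Omega(e+W(s))^N ,
$$
which is analytic with $H(0)=0$. For the coefficients, $W(s)=-sAR(e^{N-1})+O(s^2)$ with $A=(L|_Z)^{-1}$, so $c_1=\frac1N\int e^N$. If $c_1=0$, the same argument as in Theorem~\ref{thm:semilinear_exp}(3) (test $DP_N$ against $e$) gives $e^{N-1}\in\operatorname{Ran}L$. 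Then I would run the $s^2$ computation exactly as in Lemma~\ref{lem:leading}: the quadratic term contributes $\frac12\langle g,Ag\rangle$, the cross term contributes $-\langle g,Ag\rangle$, and with $g=e^{N-1}$ this yields $c_2$.

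If $H\equiv0$, then $\Gamma\equiv0$ on the one-dimensional $K$, and Corollary~\ref{cor:order_bound}(i) gives the Morse--Bott case with exponent $1/2$. Otherwise let $m=2+r_*(N-2)$. Then $\Gamma(\tau)=c\,\tau^m+O(\tau^{m+1})$ as a function of the single variable $\tau=t$ with $c\ne0$, so $\Gamma'(\tau)=mc\,\tau^{m-1}(1+o(1))$. This gives $|\Gamma'|\ge c'|\Gamma|^{(m-1)/m}$, so $(m-1)/m$ is admissible. Theorem~\ref{thm:order_bound} supplies the matching lower bound, and Theorem~\ref{thm:exact_reduction} transfers the result to $\mathcal E_N$. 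The ladder follows by listing $r_*=1,2,3,\dots$.

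The main obstacle is the identification $\psi(te)=tW(t^{N-2})$. It needs care because $W$ is produced by a different implicit-function problem, in which $s$ is a free parameter rather than a function of $t$. One has to check that the neighborhoods match so that the uniqueness clause applies. The $c_2$ computation is routine bookkeeping by comparison.
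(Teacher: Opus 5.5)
Your proposal is correct and follows the paper's proof essentially step for step. It uses the same rescaled implicit-function problem $L\chi+sR(e+\chi)^{N-1}=0$ with $s=t^{N-2}$, identifies $\psi(te)=t\chi(t^{N-2})$ by uniqueness, derives the same formula for $H$ and the same order-$s$ and order-$s^2$ coefficients, and then concludes with the one-variable exponent $(m-1)/m$ and exact reduction. The neighborhood-matching point you flag is harmless: $tW(t^{N-2})\to0$ in $Z$ as $t\to0$, so $(te,\,tW(t^{N-2}))$ eventually lies in the uniqueness neighborhood of the original reduction.
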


\begin{proof}
Let $A=(L|_Z)^{-1}$ and consider the analytic equation on $\mathbb R\times Z$,
$$
\Phi(s,\chi):=L\chi+sR(e+\chi)^{N-1}=0.
$$
Since $D_\chi\Phi(0,0)=L|_Z$ is an isomorphism, the analytic implicit-function theorem gives a unique analytic map $\chi(s)$ with $\chi(0)=0$ and $\Phi(s,\chi(s))=0$. For $s=t^{N-2}$, the element $t\chi(s)$ solves the original transverse equation at $te$, because
$L(t\chi)+R(t(e+\chi))^{N-1}=t\Phi(t^{N-2},\chi)$. Uniqueness of the Lyapunov--Schmidt correction therefore gives
$\psi(te)=t\chi(t^{N-2})$.

Since $L e=0$, substitution into the energy yields
$$
\Gamma(te)=t^2H(t^{N-2}),
\qquad
H(s):=\frac12\langle L\chi(s),\chi(s)\rangle_{Y,X}
+\frac{s}{N}\int_\Omega(e+\chi(s))^N\,dx.
$$
This $H$ is analytic and $H(0)=0$. Because $\chi(s)=O(s)$, its linear coefficient is $c_1=N^{-1}\int e^N$. If $c_1=0$, then $e^{N-1}\perp K$, so $R(e^{N-1})=e^{N-1}$, and differentiating $\Phi(s,\chi(s))=0$ at $s=0$ gives $\chi'(0)=-Ae^{N-1}$. Expanding the two terms in $H$ to order $s^2$ gives
$$
\frac12\langle L\chi,\chi\rangle
=\frac{s^2}{2}\langle e^{N-1},Ae^{N-1}\rangle+O(s^3),
$$
while
$$
\frac{s}{N}\int(e+\chi)^N
=-s^2\langle e^{N-1},Ae^{N-1}\rangle+O(s^3).
$$
Hence the displayed formula for $c_2$ follows.

If $H\equiv0$, then $\Gamma\equiv0$ on the one-dimensional kernel and Proposition~\ref{prop:morse_bott_reduction} applies. Otherwise, if $c_{r_*}$ is the first nonzero coefficient, then
$\Gamma(te)=c_{r_*}t^{2+r_*(N-2)}+\text{higher-order terms}$. A nonzero one-variable analytic germ of order $d$ has optimal gradient exponent $(d-1)/d$, so exact reduction gives the stated formula.
\end{proof}

The first two rungs recover the alternatives previously visible from Lemma~\ref{lem:leading}: if $A_N(e):=\int_\Omega e^N\,dx\ne0$, then the exponent is $(N-1)/N$; if $A_N(e)=0$ but
$B_N(e):=\langle e^{N-1},(L|_Z)^{-1}e^{N-1}\rangle\ne0$, then it is $(2N-3)/(2N-2)$.

The abstract obstruction ladder becomes especially transparent on an interval. There every resonant eigenspace is one dimensional, and the first two obstruction coefficients can be computed explicitly. For odd $N$, the parity of the eigenmode determines whether the leading coefficient vanishes, while a Green-kernel argument shows that in the cancelling case the second coefficient is always nonzero. This yields a complete classification of the optimal exponent.

\begin{theorem}\label{thm:interval-classification}
Let $\Omega=(0,\pi)$ and let $e_k(x)=\sqrt{2/\pi}\sin(kx)$. For the energy $\mathcal E_N$ one has
$$
\theta_{\mathrm{LS}}(\mathcal E_N,0)=
\begin{cases}
\dfrac12, & \lambda\notin\{1^2,2^2,3^2,\ldots\},\\[5pt]
\dfrac{N-1}{N}, & \lambda=k^2\text{ and }N\text{ is even},\\[5pt]
\dfrac{N-1}{N}, & \lambda=k^2,\ N\text{ is odd, and }k\text{ is odd},\\[5pt]
\dfrac{2N-3}{2N-2}, & \lambda=k^2,\ N\text{ is odd, and }k\text{ is even}.
\end{cases}
$$
More precisely, in the last case
$$
B_N(e_k)=-\frac{C_N}{k^2}<0,
$$
where $C_N>0$ depends only on $N$.
\end{theorem}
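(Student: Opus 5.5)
The classification reduces to Theorem~\ref{thm:semilinear_exp} and Theorem~\ref{thm:obstruction-ladder}, which leaves one explicit computation: the sign of $B_N(e_k)$.

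On $(0,\pi)$ the Dirichlet spectrum is $\{k^2\}$, and each eigenvalue is simple with eigenfunction $e_k$. If $\lambda\notin\{k^2\}$, part (1) of Theorem~\ref{thm:semilinear_exp} gives $1/2$. If $N$ is even, part (2) gives $(N-1)/N$.

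For odd $N$ we use the reflection $x\mapsto\pi-x$, under which $\sin(kx)$ picks up the factor $(-1)^{k+1}$.
\begin{itemize}
\item When $k$ is even, $e_k$ is odd about $\pi/2$, and so is $e_k^N$. Hence $A_N(e_k)=\int_0^\pi e_k^N\,dx=0$.
\item When $k$ is odd, we expand $\sin^N$ by the power-reduction formula $\sin^N y=2^{1-N}\sum_j(-1)^j\binom{N}{(N-1)/2-j}\sin((2j+1)y)$. Integrating term by term gives $\int_0^\pi\sin^N(kx)\,dx=\int_0^\pi\sin^N y\,dy$, because the substitution $y=kx$ maps the interval onto $k$ half-periods, each contributing equally when $k$ is odd. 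This integral is positive. Thus $A_N\neq0$, and the ladder (first rung) gives $(N-1)/N$.
\end{itemize}

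In the cancelling case ($N$ odd, $k$ even), Theorem~\ref{thm:obstruction-ladder} reduces the claim to $B_N(e_k)\neq0$, and we aim for the explicit formula $B_N(e_k)=-C_N/k^2$.

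Set $f=e_k^{N-1}$. It is even about $\pi/2$ and orthogonal to $e_k$. Expand it in the Dirichlet basis, $f=\sum_m f_m e_m$. Then
\[
B_N=\langle f,(L|_Z)^{-1}f\rangle=\sum_{m\neq k}\frac{f_m^2}{m^2-k^2}.
\]
We compute this sum by scaling. Write $f(x)=(2/\pi)^{(N-1)/2}\sin^{N-1}(kx)$. Since $N-1$ is even, $\sin^{N-1}y=2^{2-N}\bigl(\tfrac12\binom{N-1}{(N-1)/2}-\sum_{j\ge1}(-1)^{j+1}\binom{N-1}{(N-1)/2-j}\cos(2jy)\bigr)$ in the same power-reduction form, i.e.\ it is a trigonometric polynomial $a_0+\sum_{j\ge1}a_j\cos(2jky)$ in $y=kx$.

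It is cleaner to solve the ODE $-u''-k^2u=f-$(projection) directly with $u\in Z$ than to sum the series. Each cosine mode $\cos(2jkx)$ with $j\ge1$ has frequency $2jk\neq k$, so it is solved by $-a_j\cos(2jkx)/((4j^2-1)k^2)$. The constant $a_0$ is solved by $-a_0/k^2$. These particular solutions fail the boundary conditions, so we correct them with homogeneous solutions $\alpha\cos(kx)+\beta\sin(kx)$. At $x=0,\pi$ the values $\cos(2jk\cdot)$ and the constant are $1$, and $\cos(k\pi)=1$ for even $k$. So we can add $\gamma\cos(kx)$ with $\gamma=\sum_j a_j/((4j^2-1)k^2)+a_0/k^2$ to fix both endpoints. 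Finally we subtract the $e_k$-component to land in $Z$; this does not affect the pairing, since $f\perp e_k$.

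The result is $u=k^{-2}U(kx)$, where $U$ is a fixed $\pi$-periodic-in-$y$ profile depending only on $N$. The pairing $\int_0^\pi fu\,dx$ then equals $k^{-2}\int_0^\pi F(kx)U(kx)\,dx$, and the change of variables shows this is independent of $k$ over whole periods. This yields the $1/k^2$ scaling, with $C_N:=-\int_0^\pi F(y)U(y)\,dy/(\text{normalization})$.

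The main obstacle is the sign of $C_N$. We compute $\int fu$ explicitly:
\[
\int fu=-\frac{1}{k^2}\Bigl[\pi a_0^2+\sum_{j\ge1}\frac{\pi}{2}\,\frac{a_j^2}{4j^2-1}\Bigr]\cdot(\text{positive})+\gamma\int f\cos(kx).
\]
The $\cos(kx)$ term vanishes: $f$ has only even multiples of the frequency $k$, and $\int_0^\pi\cos(2jkx)\cos(kx)\,dx=0$ for even $k$, including $j=0$. So every remaining term is nonpositive, and the $a_0^2$ term is strictly negative because $a_0>0$. This gives $B_N(e_k)=-C_N/k^2$ with $C_N>0$, after which the second rung of Theorem~\ref{thm:obstruction-ladder} gives $(2N-3)/(2N-2)$.

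The remaining care point is the sign convention of $L=-\partial^2-k^2$ on $Z$. This operator is indefinite on $Z$ (modes $m<k$ are negative, modes $m>k$ positive), so the sign must come from the explicit solution above rather than from any positivity of $L$. I would double-check that computation, in particular that the correction $\gamma\cos(kx)$ contributes nothing to the pairing.
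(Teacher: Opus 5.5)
There is a genuine gap, and it sits at the only nontrivial point of the theorem: the sign of $B_N(e_k)$. Your reduction to Theorems~\ref{thm:semilinear_exp} and~\ref{thm:obstruction-ladder} is fine. So are the reflection argument for $A_N(e_k)=0$ when $k$ is even, the mode-by-mode ODE setup, the $1/k^2$ scaling, and the vanishing of the $\gamma\cos(kx)$ contribution.

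The error is in your particular solutions. Since $L_k\cos(2jkx)=(4j^2-1)k^2\cos(2jkx)$, the solution of $L_ku=a_j\cos(2jkx)$ is $+a_j\cos(2jkx)/((4j^2-1)k^2)$. Only the constant mode, on which $L_k$ acts by $-k^2$, carries a minus sign. Up to the positive normalization, the correct pairing is therefore
\[
k^2\int_0^\pi fu\,dx=-\pi a_0^2+\frac{\pi}{2}\sum_{j\ge1}\frac{a_j^2}{4j^2-1}.
\]
This is exactly the indefiniteness you flagged at the end. After rescaling, $-d^2/dy^2-1$ on $\pi$-periodic functions has eigenvalue $-1$ on constants and $4j^2-1>0$ on $\cos(2jy)$. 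For $N=3$ one has $a_0=\tfrac12$ and $a_1=-\tfrac12$, so the bracket is $-\pi/4+\pi/24$. The claim that every remaining term is nonpositive is therefore false, and $a_0>0$ alone proves nothing.

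Your argument never uses $f\ge0$, and that hypothesis has to enter. For a sign-changing profile such as $\cos 2y$ (so $a_0=0$), the same pairing is positive.

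What is missing is a maximum-principle-type sign for the periodic problem, which is how the paper proceeds. On $\mathbb R/\pi\mathbb Z$ the Green kernel of $-d^2/dy^2-1$ is $G(y,z)=-\tfrac12\sin\delta(y,z)\le0$, where $\delta(y,z)\in[0,\pi)$ is the representative of $y-z$. Hence $U=\int G f$ is strictly negative pointwise for $f=\sin^{N-1}\ge0$, and $\int_0^\pi fU<0$. In your Fourier language this is the identity
\[
-\tfrac12|\sin y|=\frac1\pi\Bigl(-1+\sum_{j\ge1}\frac{2\cos 2jy}{4j^2-1}\Bigr).
\]
It says that the kernel whose multipliers are exactly $-1$ and $1/(4j^2-1)$ is nonpositive, which is what makes the constant mode dominate. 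With that input, your computation closes and gives $B_N(e_k)=-C_N/k^2$.

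A minor slip in the odd-$k$ case: the half-periods alternate in sign and cancel in pairs rather than contributing equally. The correct identity is $\int_0^\pi\sin^N(kx)\,dx=\frac1k\int_0^\pi\sin^N y\,dy$, not equality without the $1/k$. This does not affect the conclusion $A_N(e_k)\neq0$.
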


\begin{proof}
The nonresonant case and the case of even $N$ follow from Theorem~\ref{thm:semilinear_exp}. Assume that $N$ is odd and $\lambda=k^2$. Put $c=(2/\pi)^{1/2}$ and $S_N=\int_0^\pi\sin^N y\,dy>0$. Then
$$
A_N(e_k)=\int_0^\pi e_k^N\,dx
=\frac{c^N S_N}{k}\sum_{j=0}^{k-1}(-1)^j.
$$
Hence $A_N(e_k)\ne0$ exactly when $k$ is odd, and Theorem~\ref{thm:obstruction-ladder} gives the third line of the formula.

Suppose now that $k$ is even. Set $f(y)=\sin^{N-1}y$, which is nonnegative, nonzero, and $\pi$-periodic. On the circle $\mathbb R/\pi\mathbb Z$, the operator $\mathcal L=-d^2/dy^2-1$ is invertible, since its Fourier eigenvalues are $4j^2-1$, $j\in\mathbb Z$. Its periodic Green kernel is
$$
G(y,z)=-\frac12\sin\delta(y,z),
$$
where $\delta(y,z)\in[0,\pi)$ is the representative of $y-z$ modulo $\pi$. Indeed, away from the diagonal this solves the homogeneous equation, it is periodic and continuous, and the derivative jump is the one required by $\mathcal LG=\delta_z$. In particular, $G(y,z)<0$ off the diagonal. Therefore the unique periodic solution
$U(y)=\int_0^\pi G(y,z)f(z)\,dz$ of $\mathcal LU=f$ satisfies $U(y)<0$ for every $y$.

Let $L_k=-d^2/dx^2-k^2$ and define
$$
\widehat w_k(x)=\frac{c^{N-1}}{k^2}\bigl(U(kx)-U(0)\cos(kx)\bigr).
$$
Because $k$ is even and $U$ is $\pi$-periodic, $\widehat w_k$ satisfies the Dirichlet boundary conditions, and $L_k\widehat w_k=e_k^{N-1}$. The $Z$-solution $(L_k|_Z)^{-1}e_k^{N-1}$ differs from $\widehat w_k$ by a multiple of $e_k$. Since $A_N(e_k)=0$, this difference does not affect its pairing with $e_k^{N-1}$. The cosine correction also contributes nothing because $\int_0^{k\pi}f(y)\cos y\,dy=0$. Hence
$$
B_N(e_k)
=\frac{c^{2N-2}}{k^3}\int_0^{k\pi}f(y)U(y)\,dy
=\frac{c^{2N-2}}{k^2}\int_0^\pi f(y)U(y)\,dy.
$$
The last integral is strictly negative, so $B_N(e_k)=-C_N/k^2$ with
$C_N:=-c^{2N-2}\int_0^\pi fU>0$. The second rung of Theorem~\ref{thm:obstruction-ladder} now gives the final exponent.
\end{proof}

\begin{example}\label{ex:cubic-cancellation}
For $N=3$, the periodic solution in the preceding proof is $U(y)=-\frac12-\frac16\cos(2y)$, and one obtains $C_3=5/(6\pi)$. Thus, for every even $k$,
$$
B_3(e_k)=-\frac{5}{6\pi k^2},
\qquad
\theta_{\mathrm{LS}}(\mathcal E_3,0)=\frac34.
$$
At $k=2$ this gives $B_3(e_2)=-5/(24\pi)$ and
$\Gamma(te_2)=\frac{5}{48\pi}t^4+O(|t|^5)$, recovering the explicit calculation that motivated the cancellation theorem.
\end{example}

For higher-dimensional resonant eigenspaces, singular directions of $P_N$ may require further terms of the reduced germ. The intrinsic Simon-pair formula of Section~\ref{sec:intrinsic} then supplies the exact finite-dimensional procedure: expand $\Gamma$ far enough to resolve those directions and compute the resulting arc or divisorial ratios.

Taken together, the two applications exhibit complementary manifestations of the intrinsic Simon singularity. In the Yang--Mills problem it records a geometric rank stratification through a determinantal model, whereas in the semilinear problem it records a hierarchy of analytic resonance obstructions. In both cases, the optimal exponent of the original infinite-dimensional functional is governed by finite-dimensional singularity data, which can then be approached by algebraic, valuative, or explicit analytic methods.

\end{document}